\DeclareMathOperator{\Ad}{\operatorname{Ad}}
\DeclareMathOperator{\Hom}{\operatorname{Hom}}
\DeclareMathOperator{\Frac}{\operatorname{Frac}}
\DeclareMathOperator{\sgn}{\operatorname{sgn}}
\DeclareMathOperator{\supp}{\operatorname{supp}}
\newcommand\sbullet[1][.5]{\mathbin{\vcenter{\hbox{\scalebox{#1}{$\bullet$}}}}}
\theoremstyle{definition}
\newtheorem{theorem}{Theorem}[section]
\newtheorem{definition}[theorem]{Definition}
\newtheorem{lemma}[theorem]{Lemma}
\newtheorem{proposition}[theorem]{Proposition}
\newtheorem{corollary}[theorem]{Corollary}
\title{Quantization of canonical bases and the \protect\\quantum symplectic double}
\author{
  Dylan G.L. Allegretti
}
\date{}
\begin{document}

\maketitle

\begin{abstract}
We describe a natural $q$-deformation of Fock and Goncharov's canonical basis for the algebra of regular functions on a cluster variety associated to a quiver of type~$A$. We then describe an extension of this construction involving a cluster variety called the symplectic double.
\end{abstract}

\tableofcontents

\section{Introduction}

\subsection{The cluster Poisson variety}

Cluster varieties are geometric objects introduced by Fock and Goncharov in a series of papers~\cite{IHES,dual,ensembles,dilog} borrowing combinatorial ideas from Fomin and Zelevinsky's theory of cluster algebras~\cite{FZI}. They arise naturally as moduli spaces of local systems on surfaces. More~precisely, one considers a \emph{marked bordered surface}, defined as a compact oriented surface~$\mathbb{S}$ with boundary, together with a finite set $\mathbb{M}\subset\mathbb{S}$ of marked points, such that any boundary component of~$\mathbb{S}$ contains at least one marked point. Given a marked bordered surface $\Sigma=(\mathbb{S},\mathbb{M})$, Fock and Goncharov defined a moduli space parametrizing $PGL_2(\mathbb{C})$-local systems on~$\mathbb{S}\setminus\mathbb{M}$ with additional framing data associated to the marked points. An \emph{ideal triangulation} of $\Sigma$ is defined as a triangulation of~$\mathbb{S}$ whose vertices are precisely the points of~$\mathbb{M}$. For any marked bordered surface $\Sigma$ admitting an ideal triangulation, the moduli space of framed local systems on~$\Sigma$ is birational to a cluster variety $\mathcal{X}_\Sigma$ known as the \emph{cluster Poisson variety}.

One of the main ideas from the work of Fock and Goncharov is that in many cases the algebra of regular functions on the cluster Poisson variety possesses a canonical vector space basis. To construct this canonical basis, Fock and Goncharov considered a dual cluster variety denoted $\mathcal{A}_\Sigma$ and known as the \emph{cluster $K_2$-variety}~\cite{double}. This cluster variety is birational to a certain moduli space of $SL_2(\mathbb{C})$-local systems associated to~$\Sigma$. One can form the set $\mathcal{A}_\Sigma(\mathbb{Z}^t)$ of points of this cluster variety valued in the semifield $\mathbb{Z}^t$ of \emph{tropical integers}, and there is a canonical map 
\[
\mathbb{I}_{\mathcal{A}}:\mathcal{A}_\Sigma(\mathbb{Z}^t)\rightarrow\mathcal{O}(\mathcal{X}_\Sigma)
\]
from this set into the algebra of regular functions on~$\mathcal{X}_\Sigma$~\cite{IHES,dual}. Building on the ideas of~\cite{IHES}, it has been shown in many cases that the image of this map is a canonical basis for the algebra of regular functions (see~\cite{categorification}, Theorem~1.1).

This canonical basis construction is closely related to Lusztig's canonical bases in the theory quantum groups~\cite{Lusztig}, as well as the work of Musiker, Schiffler, and Williams~\cite{MSW2} and Gross, Hacking, Keel, and Kontsevich~\cite{GHKK} on canonical bases for cluster algebras. It has also found applications in mathematical physics where it was used by Gaiotto, Moore, and Neitzke to count framed BPS~states in certain four-dimensional supersymmetric quantum field theories~\cite{GMN}.

For any ideal triangulation $T$ of~$\Sigma$, there is a collection of rational coordinates $X_j:\mathcal{X}_\Sigma\dashrightarrow\mathbb{C}^*$ on the cluster Poisson variety indexed by the set $J_T$ of internal edges of~$T$. These coordinates are known as \emph{cluster coordinates} or \emph{Fock-Goncharov coordinates}. If we number the edges of $T$ so that $J_T=\{1,\dots,n\}$, then the canonical function $\mathbb{I}_{\mathcal{A}}(l)$ associated to a point $l\in\mathcal{A}_\Sigma(\mathbb{Z}^t)$ can be written as a Laurent polynomial 
\[
\mathbb{I}_{\mathcal{A}}(l)=\sum_{\mathbf{d}=(d_1,\dots,d_n)\in\mathbb{Z}^n}\lambda_{\mathbf{d}}\cdot X_1^{d_1}\dots X_n^{d_n}
\]
where the coefficients $\lambda_{\mathbf{d}}$ are nonnegative integers and only finitely many terms are nonzero. This expansion includes a term of the form $X_1^{a_1}\dots X_n^{a_n}$ which is the highest term in the sense that, for any other term $\lambda_{\mathbf{d}}\cdot X_1^{d_1}\dots X_n^{d_n}$ in the expansion, we have $d_j\leq a_j$ for all $j\in J_T$.

An important feature of the cluster Poisson variety is that this object carries a canonical Poisson structure. For any ideal triangulation~$T$ of~$\Sigma$, there is a skew symmetric matrix~$\varepsilon_{ij}$~($i,j\in J_T$) such that the Poisson bracket is given on coordinates by 
\[
\{X_i,X_j\}=\varepsilon_{ij}X_iX_j.
\]
In particular, the cluster Poisson variety can be canonically quantized~\cite{ensembles, dilog}, giving rise to a family of noncommutative algebras $\mathcal{X}_\Sigma^q$ depending on a parameter $q$ such that $\mathcal{X}_\Sigma^q$ coincides with the function field $\mathbb{Q}(\mathcal{X}_\Sigma)$ when $q=1$. For any ideal triangulation $T$ of~$\Sigma$, the algebra $\mathcal{X}_\Sigma^q$ is isomorphic to the noncommutative fraction field of a quantum torus, generated over~$\mathbb{Q}[q,q^{-1}]$ by noncommuting variables $X_j^{\pm1}$, $j\in J_T$, subject to the relations 
\[
X_iX_j=q^{2\varepsilon_{ij}}X_jX_i
\]
for all $i$,~$j\in J_T$.

In this paper, we will study the quantization of~$\mathcal{X}_\Sigma$ in the important special case where the marked bordered surface $\Sigma=(\mathbb{D},\mathbb{M})$ is a disk $\mathbb{D}$ with a finite set~$\mathbb{M}$ of marked points on its boundary. Equivalently, we are considering cluster varieties associated to a quiver of Dynkin type~$A$. We prove the following result, which provides a quantization of the canonical functions $\mathbb{I}_\mathcal{A}(l)$ in this case.

\begin{theorem}
\label{thm:introXproperties}
Let $\Sigma=(\mathbb{D},\mathbb{M})$ be a disk with marked points on its boundary. Then there is a canonical map $\mathbb{I}_{\mathcal{A}}^q:\mathcal{A}_\Sigma(\mathbb{Z}^t)\rightarrow\mathcal{X}_\Sigma^q$ satisfying the following properties:
\begin{enumerate}
\item For any choice of ideal triangulation $T$ of~$\Sigma$, the element $\mathbb{I}_{\mathcal{A}}^q(l)\in\mathcal{X}_\Sigma^q$ can be written as a Laurent polynomial in the variables $X_j$ ($j\in J_T$) with coefficients in $\mathbb{Z}_{\geq0}[q,q^{-1}]$.

\item The expression $\mathbb{I}_{\mathcal{A}}^q(l)$ agrees with the Laurent polynomial $\mathbb{I}_{\mathcal{A}}(l)$ when~$q=1$.

\item Let $*$ be the canonical involutive antiautomorphism of $\mathcal{X}^q$ that fixes each $X_i$ and sends~$q$ to~$q^{-1}$. Then $*\mathbb{I}_{\mathcal{A}}^q(l)=\mathbb{I}_{\mathcal{A}}^q(l)$.

\item The highest term of $\mathbb{I}_{\mathcal{A}}^q(l)$ is
\[
q^{-\sum_{i<j}\varepsilon_{ij}a_ia_j}X_1^{a_1}\dots X_n^{a_n}
\]
where $X_1^{a_1}\dots X_n^{a_n}$ is the highest term of the classical expression $\mathbb{I}_{\mathcal{A}}(l)$.

\item For any $l$,~$l'\in\mathcal{A}_\Sigma(\mathbb{Z}^t)$, we have
\[
\mathbb{I}_{\mathcal{A}}^q(l)\mathbb{I}_{\mathcal{A}}^q(l') = \sum_{l''\in\mathcal{A}_\Sigma(\mathbb{Z}^t)}c^q(l,l';l'')\mathbb{I}_{\mathcal{A}}^q(l'')
\]
where $c^q(l,l';l'')\in\mathbb{Z}[q,q^{-1}]$ and only finitely many terms are nonzero.
\end{enumerate}
\end{theorem}

The existence of the map $\mathbb{I}_{\mathcal{A}}^q$ was conjectured by Fock and Goncharov in~\cite{ensembles}. In physics, it is expected to be closely related to the ``protected spin characters'' introduced by Gaiotto, Moore, and Neitzke for a class of quantum field theories known as the Argyres-Douglas theories associated to a Lie algebra of type $A_1$~\cite{GMN}.

The map $\mathbb{I}_\mathcal{A}^q$ is an analog of the one constructed in~\cite{AK} in the case when the marked bordered surface $\Sigma=(\mathbb{S},\mathbb{M})$ is a punctured surface, that is, when $\mathbb{S}$ has empty boundary and all points of $\mathbb{M}$ lie in the interior of~$\mathbb{S}$. The construction in~\cite{AK} was based on the ``quantum trace map'' introduced by Bonahon and Wong~\cite{BonahonWong} whereas the construction of the present paper is based on the theory of quantum cluster algebras~\cite{BZq}, the notion of a quantum $F$-polynomial from~\cite{Tran}, and the work of Muller on skein algebras~\cite{Muller}. Recent results of L\^{e}~\cite{Le} suggest that these two approaches are in fact equivalent. It should be possible, using these ideas, to generalize the construction to an arbitrary marked bordered surface~$\Sigma$, but we will not pursue such a generalization here.

\subsection{The symplectic double}

In addition to proving Theorem~\ref{thm:introXproperties}, we will show that the duality map $\mathbb{I}_\mathcal{A}^q$ is part of a larger construction involving a cluster variety called the \emph{symplectic double}. This cluster variety was introduced by Fock and Goncharov and plays a key role in their work on quantization of cluster Poisson varieties~\cite{dilog}. As before, one can associate, to a marked bordered surface~$\Sigma$, a certain moduli space of local systems, and this moduli space is birationally equivalent to a symplectic double cluster variety, which we denote by $\mathcal{D}_\Sigma$~\cite{double,Dmoduli}.

The symplectic double is closely related to the cluster $K_2$- and Poisson varieties considered above. In particular, one has a natural embedding $\varphi:\mathcal{A}_\Sigma\rightarrow\mathcal{D}_\Sigma$ and a natural projection $\pi:\mathcal{D}_\Sigma\rightarrow\mathcal{X}_\Sigma$. In previous work~\cite{Dlam}, the author gave a description of the set~$\mathcal{D}_\Sigma(\mathbb{Z}^t)$ of $\mathbb{Z}^t$-valued points of the symplectic double and constructed a canonical map 
\[
\mathbb{I}_{\mathcal{D}}:\mathcal{D}_\Sigma(\mathbb{Z}^t)\rightarrow\mathbb{Q}(\mathcal{D}_\Sigma)
\]
from this set into the field of rational functions on $\mathcal{D}_\Sigma$. This map $\mathbb{I}_{\mathcal{D}}$ is compatible with $\mathbb{I}_\mathcal{A}$ in the sense that there is a commutative diagram 
\[
\xymatrix{ 
\mathcal{A}_\Sigma(\mathbb{Z}^t) \ar[r]^-{\mathbb{I}_\mathcal{A}} \ar[d]_{\varphi} & \mathbb{Q}(\mathcal{X}_\Sigma) \ar[d]^{\pi^*} \\
\mathcal{D}_\Sigma(\mathbb{Z}^t) \ar_-{\mathbb{I}_{\mathcal{D}}}[r] & \mathbb{Q}(\mathcal{D}_\Sigma)
}
\]
where the vertical maps are induced by $\varphi:\mathcal{A}_\Sigma\rightarrow\mathcal{D}_\Sigma$ and $\pi:\mathcal{D}_\Sigma\rightarrow\mathcal{X}_\Sigma$.

In general, the function obtained by applying the map $\mathbb{I}_{\mathcal{D}}$ to a point of $\mathcal{D}_\Sigma(\mathbb{Z}^t)$ is not a regular function on the symplectic double. The natural geometric definition of the map~$\mathbb{I}_{\mathcal{D}}$ produces \emph{rational} rather than regular functions, and in particular, the map $\mathbb{I}_{\mathcal{D}}$ does not provide a canonical basis for regular functions of~$\mathcal{D}_\Sigma$. Nevertheless, these rational expressions have interesting positivity properties. Indeed, for any ideal triangulation $T$ of~$\Sigma$, there is a collection of rational coordinates $B_j:\mathcal{D}_T\dashrightarrow\mathbb{C}^*$ and $X_j:\mathcal{D}_T\dashrightarrow\mathbb{C}^*$ on the symplectic double indexed by the set~$J_T$, and the canonical function $\mathbb{I}_{\mathcal{D}}$ can be written as a ratio of polynomials in these coordinates with nonnegative integer coefficients~\cite{Dlam}.

As its name suggests, the symplectic double has a natural symplectic structure and admits a canonical $q$-deformation~\cite{dilog}. Thus there is a family of noncommutative algebras~$\mathcal{D}_\Sigma^q$ depending on a parameter $q$ such that we recover the function field $\mathbb{Q}(\mathcal{D}_\Sigma)$ by setting $q=1$. For any ideal triangulation~$T$ of~$\Sigma$, the algebra $\mathcal{D}_\Sigma^q$ is isomorphic to the noncommutative fraction field of the algebra generated over $\mathbb{Q}[q,q^{-1}]$ by variables $B_j^{\pm1}$ and $X_j^{\pm1}$ for $j\in J_T$, subject to the relations 
\[
X_iX_j=q^{2\varepsilon_{ij}}X_jX_i, \quad B_iB_j=B_jB_i, \quad X_iB_j=q^{2\delta_{ji}}B_jX_i
\]
for all $i$,~$j\in J_T$.

The second main result of this paper is a quantization of the map~$\mathbb{I}_{\mathcal{D}}$ in the case where $\Sigma$ is a disk with finitely many marked points on its boundary.

\begin{theorem}
Let $\Sigma=(\mathbb{D},\mathbb{M})$ be a disk with marked points on its boundary. Then there is a canonical map $\mathbb{I}_{\mathcal{D}}^q:\mathcal{D}_\Sigma(\mathbb{Z}^t)\rightarrow\mathcal{D}_\Sigma^q$ satisfying the following properties:
\begin{enumerate}
\item For any choice of ideal triangulation $T$ of~$\Sigma$, the element $\mathbb{I}_{\mathcal{D}}^q(l)\in\mathcal{D}_\Sigma^q$ can be written as a rational expression in the variables $B_j$, $X_j$~($j\in J_T$) with coefficients in $\mathbb{Z}_{\geq0}[q,q^{-1}]$.

\item The expression $\mathbb{I}_{\mathcal{D}}^q(l)$ agrees with the rational function $\mathbb{I}_{\mathcal{D}}(l)$ when $q=1$.

\item There is a commutative diagram 
\[
\xymatrix{ 
\mathcal{A}_\Sigma(\mathbb{Z}^t) \ar[r]^-{\mathbb{I}_{\mathcal{A}}^q} \ar[d]_{\varphi} & \mathcal{X}_\Sigma^q \ar[d]^{\pi^q} \\
\mathcal{D}_\Sigma(\mathbb{Z}^t) \ar_-{\mathbb{I}_{\mathcal{D}}^q}[r] & \mathcal{D}_\Sigma^q
}
\]
where $\pi^q$ is an algebra homomorphism that agrees with $\pi^*$ when $q=1$.
\end{enumerate}
\end{theorem}

\subsection{Organization}

The rest of this paper is organized as follows. In Section~\ref{sec:QuantumClusterVarieties}, we define the cluster Poisson variety, the symplectic double, and their $q$-deformations using the quantum dilogarithm function. In Section~\ref{sec:QuantumClusterAlgebras}, we review the tools we need from the theory of quantum cluster algebras. In Section~\ref{sec:TheSkeinAlgebraAndLaminations}, we define the skein algebra and review the results of Muller~\cite{Muller}, which relate skein algebras to quantum cluster algebras. In Section~\ref{sec:DualityMapForTheQuantumPoissonVariety}, we construct the map~$\mathbb{I}_\mathcal{A}^q$ and prove a number of conjectured properties of this map. In Section~\ref{sec:DualityMapForTheQuantumSymplecticDouble}, we define the map~$\mathbb{I}_{\mathcal{D}}^q$ and discuss its relation to~$\mathbb{I}_{\mathcal{A}}^q$. Appendix~\ref{app:DerivationOfMutationFormulas} gives a detailed derivation of the mutation formulas used to define the quantum symplectic double and may be of independent interest to some readers.

\section{Quantum cluster varieties}
\label{sec:QuantumClusterVarieties}

\subsection{Seeds and mutations}

Cluster varieties are geometric objects defined using combinatorial ideas from Fomin and Zelevinsky's theory of cluster algebras~\cite{FZI}. One of the main concepts needed to define cluster varieties is the notion of a seed.

\begin{definition}
\label{def:altseed}
A \emph{seed} $\mathbf{i}=(L,\{e_i\}_{i\in I},\{e_j\}_{j\in J},(\cdot,\cdot))$ is a quadruple where 
\begin{enumerate}
\item $L$ is a lattice with basis $\{e_i\}_{i\in I}$.
\item $\{e_j\}_{j\in J}$ is a subset of the basis.
\item $(\cdot,\cdot)$ is a $\mathbb{Z}$-valued skew-symmetric bilinear form on $L$.
\end{enumerate}
\end{definition}

A basis vector $e_j$ with $j\in J$ is said to be \emph{mutable}, while a basis vector $e_i$ with $i\in I-J$ is said to be \emph{frozen}. Note that if we are given a seed, we can form a skew-symmetric integer matrix with entries $\varepsilon_{ij}=(e_i,e_j)$~($i$,~$j\in I$).

The second main concept that we need to define cluster varieties is the notion of mutation. For any integer $n$, let us write $[n]_+=\max(0,n)$.

\begin{definition}
\label{def:altmutation}
Let $\mathbf{i}=(L,\{e_i\}_{i\in I},\{e_j\}_{j\in J},(\cdot,\cdot))$ be a seed and $e_k$ ($k\in J$) a mutable basis vector. Then we define a new seed $\mathbf{i}'=(L',\{e_i'\}_{i\in I},\{e_j'\}_{j\in J},(\cdot,\cdot)')$ called the seed obtained by \emph{mutation} in the direction of $e_k$. It is given by $L'=L$, $(\cdot,\cdot)'=(\cdot,\cdot)$, and 
\[
e_i'=
\begin{cases}
-e_k & \mbox{if } i=k \\
e_i+[\varepsilon_{ik}]_+e_k & \mbox{if } i\neq k.
\end{cases}
\]
\end{definition}

It is straightforward to calculate the change of the matrix $\varepsilon_{ij}$ under a mutation of seeds.

\begin{proposition}
\label{prop:matrixmutation}
A mutation in the direction~$k$ changes the matrix~$\varepsilon_{ij}$ to the matrix
\[
\varepsilon_{ij}'=
\begin{cases}
-\varepsilon_{ij} & \mbox{if } k\in\{i,j\} \\
\varepsilon_{ij}+\frac{|\varepsilon_{ik}|\varepsilon_{kj}+\varepsilon_{ik}|\varepsilon_{kj}|}{2} & \mbox{if } k\not\in\{i,j\}.
\end{cases}
\]
\end{proposition}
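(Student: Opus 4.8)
The plan is to compute $\varepsilon_{ij}'=(e_i',e_j')'$ directly from Definition~\ref{def:altmutation}. Since mutation leaves the bilinear form unchanged, $(\cdot,\cdot)'=(\cdot,\cdot)$, so it suffices to substitute the formulas for $e_i'$ and $e_j'$ and expand using bilinearity, keeping in mind that a skew-symmetric form is alternating, so $(e_k,e_k)=0$.

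First I would dispose of the case $k\in\{i,j\}$. If $i=k$ and $j\neq k$, then $e_k'=-e_k$ and $e_j'=e_j+[\varepsilon_{jk}]_+e_k$, so
\[
\varepsilon_{kj}'=\bigl(-e_k,\,e_j+[\varepsilon_{jk}]_+e_k\bigr)=-(e_k,e_j)-[\varepsilon_{jk}]_+(e_k,e_k)=-\varepsilon_{kj},
\]
the second term vanishing because the form is alternating; the case $j=k$, $i\neq k$ is symmetric, and $i=j=k$ is trivial since $\varepsilon_{kk}=0$. This gives the first line of the claimed formula. For the main case $k\notin\{i,j\}$, substituting $e_i'=e_i+[\varepsilon_{ik}]_+e_k$ and $e_j'=e_j+[\varepsilon_{jk}]_+e_k$ and expanding, the $(e_k,e_k)$ term again drops out and one is left with
\[
\varepsilon_{ij}'=\varepsilon_{ij}+[\varepsilon_{jk}]_+\,\varepsilon_{ik}+[\varepsilon_{ik}]_+\,\varepsilon_{kj}.
\]
It then remains to match this with the stated expression. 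I would use the elementary identities $[n]_++[-n]_+=|n|$ and $[n]_+-[-n]_+=n$, which give $[n]_+=\tfrac12(|n|+n)$, together with skew-symmetry $\varepsilon_{jk}=-\varepsilon_{kj}$, so that $[\varepsilon_{jk}]_+=\tfrac12(|\varepsilon_{kj}|-\varepsilon_{kj})$. Substituting these, the cross terms $\pm\tfrac12\varepsilon_{ik}\varepsilon_{kj}$ cancel and one obtains $\varepsilon_{ij}+\tfrac12\bigl(|\varepsilon_{ik}|\varepsilon_{kj}+\varepsilon_{ik}|\varepsilon_{kj}|\bigr)$, as desired.

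The computation presents no real obstacle; the only point requiring a little care is the conversion between the $[\cdot]_+$ notation and absolute values, where one must apply skew-symmetry to rewrite exactly one of the two $[\cdot]_+$ factors so that the linear (non-absolute-value) contributions cancel against each other rather than adding.
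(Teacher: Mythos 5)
Your computation is correct and is exactly the "straightforward calculation" the paper alludes to before stating the proposition (no proof is given in the text). Both cases check out, and the key conversion $[\varepsilon_{jk}]_+=\tfrac12(|\varepsilon_{kj}|-\varepsilon_{kj})$ via skew-symmetry is handled properly so the cross terms cancel as claimed.
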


We will denote by $|\mathbf{i}|$ the mutation equivalence class of a seed~$\mathbf{i}$, that is, the collection of all seeds obtained from $\mathbf{i}$ by applying sequences of mutations.

\subsection{The quantum dilogarithm and quantum tori}

To define quantum cluster varieties, we employ the following special function.

\begin{definition}
The \emph{quantum dilogarithm} is the formal infinite product 
\[
\Psi^q(x)=\prod_{k=1}^\infty(1+q^{2k-1}x)^{-1}.
\]
\end{definition}

We will study the quantum dilogarithm as a function on the following algebra of $q$-commuting variables.

\begin{definition}
\label{def:quantumtorus}
Let $L$ be a lattice equipped with a $\mathbb{Z}$-valued skew-symmetric bilinear form~$(\cdot,\cdot)$. Then the \emph{quantum torus algebra} is the noncommutative algebra over $\mathbb{Q}[q,q^{-1}]$ generated by variables $Y_v$ ($v\in L$) subject to the relations 
\[
q^{-(v_1,v_2)}Y_{v_1}Y_{v_2}=Y_{v_1+v_2}.
\]
\end{definition}

This definition allows us to associate to any seed $\mathbf{i}=(L,\{e_i\}_{i\in I},\{e_j\}_{j\in J},(\cdot,\cdot))$, a quantum torus algebra $\mathcal{X}_{\mathbf{i}}^q$. The set $\{e_j\}_{j\in J}$ provides a set of generators $X_j^{\pm1}$ given by $X_j=Y_{e_j}$ for this algebra. They obey the commutation relations 
\[
X_iX_j=q^{2\varepsilon_{ij}}X_jX_i.
\]
This algebra $\mathcal{X}_{\mathbf{i}}^q$ satisfies the Ore condition from ring theory, so we can form its noncommutative fraction field $\Frac({\mathcal{X}}_{\mathbf{i}}^q)$. In addition to associating a quantum torus algebra to every seed, we use the quantum dilogarithm to construct a natural map $\Frac({\mathcal{X}}_{\mathbf{i}'}^q)\rightarrow\Frac({\mathcal{X}}_{\mathbf{i}}^q)$ whenever two seeds $\mathbf{i}$ and~$\mathbf{i}'$ are related by a mutation.

\Needspace*{3\baselineskip}
\begin{definition}\mbox{}
\label{def:quantummutationdef}
\begin{enumerate}
\item The automorphism $\mu_k^\sharp:\Frac({\mathcal{X}}_{\mathbf{i}}^q)\rightarrow\Frac({\mathcal{X}}_{\mathbf{i}}^q)$ is given by conjugation with $\Psi^q(X_k)$:
\[
\mu_k^\sharp=\Ad_{\Psi^q(X_k)}.
\]

\item The isomorphism $\mu_k':\Frac({\mathcal{X}}_{\mathbf{i}'}^q)\rightarrow\Frac({\mathcal{X}}_{\mathbf{i}}^q)$ is induced by the natural lattice map $L'\rightarrow L$.

\item The mutation map $\mu_k^q:\Frac({\mathcal{X}}_{\mathbf{i}'}^q)\rightarrow\Frac({\mathcal{X}}_{\mathbf{i}}^q)$ is the composition $\mu_k^q=\mu_k^\sharp\circ\mu_k'$.
\end{enumerate}
\end{definition}

Note that conjugation by $\Psi^q(X_k)$ produces a priori a formal power series. We will see below that this construction in fact provides a map $\Frac({\mathcal{X}}_{\mathbf{i}'}^q)\rightarrow\Frac({\mathcal{X}}_{\mathbf{i}}^q)$ of skew fields.

\subsection{The quantum double construction}
\label{sec:TheQuantumDoubleConstruction}

As explained in~\cite{dilog}, it is natural to embed the above construction in a larger one. If $\mathbf{i}=(L,\{e_i\}_{i\in I},\{e_j\}_{j\in J},(\cdot,\cdot))$ is any seed, then we can form the ``double'' $L_{\mathcal{D}}=L_{\mathcal{D},\mathbf{i}}$ of the lattice $L$ given by the formula 
\[
L_{\mathcal{D}}=L\oplus L^\vee
\]
where $L^\vee=\Hom(L,\mathbb{Z})$. The basis $\{e_i\}$ for $L$ provides a dual basis $\{f_i\}$ for $L^\vee$, and hence we have a basis $\{e_i,f_i\}$ for $L_{\mathcal{D}}$. Moreover, there is a natural skew-symmetric bilinear form $(\cdot,\cdot)_{\mathcal{D}}$ on~$L_{\mathcal{D}}$ given by the formula 
\[
\left((v_1,\varphi_1),(v_2,\varphi_2)\right)_{\mathcal{D}}=(v_1,v_2)+\varphi_2(v_1)-\varphi_1(v_2).
\]
We can apply the construction of Definition~\ref{def:quantumtorus} to these data to get a quantum torus algebra which we denote $\mathcal{D}_{\mathbf{i}}^q$. If we let $X_i$ and $B_i$ denote the generators associated to the basis elements $e_i$ and $f_i$, respectively, then we have the commutation relations 
\[
X_iX_j=q^{2\varepsilon_{ij}}X_jX_i, \quad B_iB_j=B_jB_i, \quad X_iB_j=q^{2\delta_{ji}}B_jX_i.
\]

We will write $\Frac({\mathcal{D}}_{\mathbf{i}}^q)$ for the (noncommutative) fraction field of~$\mathcal{D}_{\mathbf{i}}^q$. The following notations will be important in the sequel:
\[
\mathbb{B}_k^+=\prod_{i|(e_k,e_i)>0}B_i^{(e_k,e_i)}, \quad \mathbb{B}_k^-=\prod_{i|(e_k,e_i)<0}B_i^{-(e_k,e_i)}, \quad \widehat{X}_i=X_i\prod_jB_j^{(e_i,e_j)}.
\]
One can check using the above relations that the elements $X_k$ and $\widehat{X}_k$ commute. Just as before, we have a natural map $\Frac({\mathcal{D}}_{\mathbf{i}'}^q)\rightarrow\Frac({\mathcal{D}}_{\mathbf{i}}^q)$ whenever $\mathbf{i}$ and $\mathbf{i}'$ are two seeds related by a mutation.

\Needspace*{3\baselineskip}
\begin{definition}\mbox{}
\label{def:doublemutation}
\begin{enumerate}
\item The automorphism $\mu_k^\sharp:\Frac({\mathcal{D}}_{\mathbf{i}}^q)\rightarrow\Frac({\mathcal{D}}_{\mathbf{i}}^q)$ is given by 
\[
\mu_k^\sharp=\Ad_{\Psi^q(X_k)/\Psi^q(\widehat{X}_k)}.
\]

\item The isomorphism $\mu_k':\Frac({\mathcal{D}}_{\mathbf{i}'}^q)\rightarrow\Frac({\mathcal{D}}_{\mathbf{i}}^q)$ is induced by the natural lattice map $L_{\mathcal{D},\mathbf{i}'}\rightarrow L_{\mathcal{D},\mathbf{i}}$.

\item The mutation map $\mu_k^q:\Frac({\mathcal{D}}_{\mathbf{i}'}^q)\rightarrow\Frac({\mathcal{D}}_{\mathbf{i}}^q)$ is the composition $\mu_k^q=\mu_k^\sharp\circ\mu_k'$.
\end{enumerate}
\end{definition}

Notice that the generators $X_i$ span a subalgebra of $\mathcal{D}_{\mathbf{i}}^q$ which is isomorphic to the quantum torus algebra $\mathcal{X}_{\mathbf{i}}^q$ defined previously. Moreover, since $X_k$ and $\widehat{X}_k$ commute, the restriction of~$\mu_k^q$ to this subalgebra coincides with the mutation map from Definition~\ref{def:quantummutationdef}.

Although conjugation by $\Psi^q(X_k)/\Psi^q(\widehat{X}_k)$ produces a priori a formal power series, this construction in fact provides a map $\Frac({\mathcal{D}}_{\mathbf{i}'}^q)\rightarrow\Frac({\mathcal{D}}_{\mathbf{i}}^q)$ of skew fields. Indeed, one has the following formulas for the values of the map $\mu_k^q$ on generators.

\begin{theorem}
\label{thm:transformation}
The map $\mu_k^q$ is given on generators by the formulas 
\[
\mu_k^q(B_i')=
\begin{cases}
(qX_k\mathbb{B}_k^++\mathbb{B}_k^-)B_k^{-1}(1+q^{-1}X_k)^{-1} & \mbox{if } i=k \\
B_i & \mbox{if } i\neq k
\end{cases}
\]
and
\[
\mu_k^q(X_i')=
\begin{cases}
X_i\prod_{p=0}^{|\varepsilon_{ik}|-1}(1+q^{2p+1}X_k) & \mbox{if } \varepsilon_{ik}\leq0 \mbox{ and } i\neq k \\
X_iX_k^{\varepsilon_{ik}}\prod_{p=0}^{\varepsilon_{ik}-1}(X_k+q^{2p+1})^{-1} & \mbox{if } \varepsilon_{ik}\geq0 \mbox{ and } i\neq k \\
X_k^{-1} & \mbox{if } i=k.
\end{cases}
\]
\end{theorem}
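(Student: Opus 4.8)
The statement is a direct computation: I need to evaluate $\mu_k^q = \mu_k^\sharp \circ \mu_k'$ on each generator. The plan is to handle the two composed maps in turn. First I would apply $\mu_k'$, the lattice isomorphism $\Lambda_{\mathcal{D},\mathbf{i}'} \to \Lambda_{\mathcal{D},\mathbf{i}}$ induced by Definition~\ref{def:altmutation}. On the $e$-basis this is the familiar cluster Poisson mutation, so $\mu_k'(X_i')$ is already covered by the analogous (known) computation for $\widehat{\mathcal{X}}^q$; one gets $X_i' \mapsto q^{c} Y_{e_i + [\varepsilon_{ik}]_+ e_k}$ for $i \ne k$ and $X_k' \mapsto X_k^{-1}$, with an explicit power of $q$ coming from the $q$-commutation normalization. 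For the $B_i'$ I must track how the dual basis $\{f_i'\}$ sits inside $\Lambda_{\mathcal{D},\mathbf{i}}^\vee$: since $e_i' = e_i + [\varepsilon_{ik}]_+ e_k$ for $i \ne k$ and $e_k' = -e_k$, dualizing gives $f_i' = f_i$ for $i \ne k$ and $f_k' = -f_k + \sum_{i}[\varepsilon_{ik}]_+ f_i$, whence $\mu_k'(B_i')$ is a monomial in the $B$'s (with a $q$-power from reordering). This is the step where sign/index bookkeeping is easy to get wrong, so I would do it carefully, separating the $\varepsilon_{ik} \ge 0$ and $\varepsilon_{ik} \le 0$ cases at the outset.

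Next I would apply $\mu_k^\sharp = \Ad_{\Psi^q(X_k)/\Psi^q(\widehat X_k)}$ to the results. The key computational inputs are the standard conjugation identities for the quantum dilogarithm: $\Psi^q(X_k) Y_v \Psi^q(X_k)^{-1}$ equals $Y_v$ times a finite $q$-Pochhammer-type product in $X_k$ whose length is $|(v, e_k)|$ and whose ``direction'' depends on the sign of $(v, e_k)$, and similarly for $\Psi^q(\widehat X_k)$ with $\widehat X_k$ in place of $X_k$ and the pairing against $\widehat X_k$ replacing the pairing against $e_k$. For $X_i'$ with $i \ne k$: since $X_i$ commutes with all $B_j$, only the $\Psi^q(X_k)$ factor acts nontrivially, and $(e_i + [\varepsilon_{ik}]_+ e_k, e_k) = \varepsilon_{ik}$, producing exactly the finite products $\prod_{p=0}^{|\varepsilon_{ik}|-1}(1 + q^{2p+1}X_k)^{\pm 1}$ displayed in the theorem, with the two sign cases matching up after absorbing the $q$-powers from $\mu_k'$ and from moving $X_k^{\varepsilon_{ik}}$ past $X_i$. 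For $X_k' \mapsto X_k^{-1}$ nothing further happens since $X_k$ is central for $\Ad_{\Psi^q(X_k)}$ and commutes with $\widehat X_k$. The one genuinely new case is $B_k'$.

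For $B_k'$ the point is that $\mu_k'(B_k')$ is a monomial of the form $q^{?}\, B_k^{-1}\,\mathbb{B}_k^{+}$ or $q^{?}\,B_k^{-1}\,(\mathbb{B}_k^{-})^{-1}$ depending on the sign convention, and now \emph{both} $\Psi^q(X_k)$ and $\Psi^q(\widehat X_k)$ act nontrivially because $B$'s pair nontrivially against $\widehat X_k$. I would compute the pairing of the relevant $B$-monomial against $\widehat X_k = X_k \prod_j B_j^{(e_k,e_j)}$: the $X_k$-part contributes through the $X$–$B$ commutation $X_k B_j = q^{2\delta_{jk}} B_j X_k$, and the $\prod B_j^{(e_k,e_j)}$-part contributes nothing since the $B$'s commute among themselves. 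Conjugating by $\Psi^q(X_k)^{-1}$ then multiplies by a factor $(1 + q^{-1}X_k)^{-1}$ (a single-step Pochhammer, since the relevant pairing has absolute value $1$), while conjugating by $\Psi^q(\widehat X_k)$ contributes a factor involving $\widehat X_k$; using the identity $\widehat X_k \mathbb{B}_k^{\pm} = \mathbb{B}_k^{\pm} \cdot (\text{power of } q) \cdot X_k^{\pm \cdots}$ and the fact that $X_k$ and $\widehat X_k$ commute, these combine to produce the numerator $qX_k \mathbb{B}_k^+ + \mathbb{B}_k^-$. Assembling the pieces — monomial from $\mu_k'$, the $(1+q^{-1}X_k)^{-1}$ from $\Psi^q(X_k)^{-1}$, and the $\Psi^q(\widehat X_k)$ contribution — and fixing the $q$-powers by demanding consistency at $q=1$ with the known classical formula for $\mathbb{I}_{\mathcal D}$ (equivalently, by a direct but tedious normalization check) yields the claimed expression.

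The main obstacle is bookkeeping rather than conceptual: correctly computing the $\Psi^q(\widehat X_k)$-conjugation of a $B$-monomial (this is where the $\widehat X_k$, not $X_k$, enters, and where the noncommutativity between $X_k$ and the $B_j$'s matters), and pinning down all the $q$-powers so that the two $\varepsilon_{ik}$-sign cases for $X_i'$ and the single formula for $B_k'$ come out in the stated normalized form. I would verify the final formulas are consistent under $\mu_k^q \circ \mu_k^q = \mathrm{id}$ (involutivity of mutation) as an internal check, and confirm the $q=1$ limit reproduces the classical mutation rule for the symplectic double from~\cite{dilog}.
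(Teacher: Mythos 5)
Your route is the paper's route (Appendix~\ref{app:DerivationOfMutationFormulas}): compute $\mu_k'$ from the lattice map and the transposed action on the dual basis, then compute $\mu_k^\sharp$ by the quantum-dilogarithm conjugation identities, and compose. Three local claims in your plan are wrong as stated and need repair, though none is fatal to the approach. First, your reason that only $\Psi^q(X_k)$ acts nontrivially on $X_i$ --- ``since $X_i$ commutes with all $B_j$'' --- is false: $X_iB_i=q^{2}B_iX_i$. The correct statement, which is what the paper uses, is that $X_i$ commutes with $\widehat{X}_k=X_k\prod_jB_j^{(e_k,e_j)}$, because the $q$-factors from moving $X_i$ past $X_k$ and past $B_i^{\varepsilon_{ki}}$ cancel ($\varepsilon_{ik}+\varepsilon_{ki}=0$). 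Second, your own (correct) dual-basis formula $f_k'=-f_k+\sum_i[\varepsilon_{ik}]_+f_i$ forces $\mu_k'(B_k')=B_k^{-1}\mathbb{B}_k^-$ exactly, with no $q$-power, since the form $(\cdot,\cdot)_{\mathcal{D}}$ vanishes on $\Lambda^\vee$; neither of your two candidate answers $B_k^{-1}\mathbb{B}_k^+$ and $B_k^{-1}(\mathbb{B}_k^-)^{-1}$ is right, and this is not a matter of convention. Third, you cannot pin down residual powers of $q$ ``by demanding consistency at $q=1$'': every such power specializes to $1$ in the classical limit, so that check carries no information. The only option is the direct computation, which in the paper is short once $\mu_k^\sharp(B_k)=B_k(1+qX_k)(1+q\widehat{X}_k)^{-1}$ is established: one gets $\mu_k^q(B_k')=\mathbb{B}_k^-(1+q\widehat{X}_k)(1+qX_k)^{-1}B_k^{-1}=(qX_k\mathbb{B}_k^++\mathbb{B}_k^-)(1+qX_k)^{-1}B_k^{-1}$, using that $\mathbb{B}_k^-$ commutes with $X_k$ and $\mathbb{B}_k^-\prod_jB_j^{\varepsilon_{kj}}=\mathbb{B}_k^+$, and then commutes $B_k^{-1}$ past $(1+qX_k)^{-1}$ to reach the stated normal form. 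Your proposed internal checks (involutivity, $q=1$ limit) are fine as sanity checks but are not substitutes for these computations.
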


The proof of this theorem can be found in Appendix~\ref{app:DerivationOfMutationFormulas}.

\subsection{The classical limit}
\label{sec:TheClassicalLimit}

We have now seen how to associate, to any seed $\mathbf{i}$, a noncommutative algebra $\mathcal{D}_{\mathbf{i}}^q$. In addition, we have seen how to associate, to any pair of seeds $\mathbf{i}$ and $\mathbf{i}'$ related by a mutation, an isomorphism $\Frac({\mathcal{D}}_{\mathbf{i}'}^q)\rightarrow \Frac({\mathcal{D}}_{\mathbf{i}}^q)$ of the corresponding fraction fields.

Note that if we set $q=1$, then $\mathcal{D}_{\mathbf{i}}^q$ becomes the Laurent polynomial ring $\mathbb{Q}[B_i^{\pm1},X_i^{\pm1}]_{i\in J}$. Its spectrum is a split algebraic torus, which we denote by $\mathcal{D}_{\mathbf{i}}$. The formulas of Theorem~\ref{thm:transformation} specialize to 
\[
\mu_k^*(B_i') =
\begin{cases}
\frac{X_k\prod_{j|\varepsilon_{kj>0}}B_j^{\varepsilon_{kj}} + \prod_{j|\varepsilon_{kj<0}}B_j^{-\varepsilon_{kj}}}{(1+X_k)B_k} & \mbox{if } i=k \\
B_i & \mbox{if } i\neq k
\end{cases}
\]
and 
\[
\mu_k^*(X_i')=
\begin{cases}
X_k^{-1} & \mbox{if } i=k \\
X_i{(1+X_k^{-\sgn(\varepsilon_{ik})})}^{-\varepsilon_{ik}} & \mbox{if } i\neq k
\end{cases}
\]
where $\{B_j',X_j'\}_{j\in J}$ are the coordinates on $\mathcal{D}_{\mathbf{i}'}$. These formulas define birational maps $\mathcal{D}_{\mathbf{i}}\dashrightarrow\mathcal{D}_{\mathbf{i}'}$ of tori.

\begin{definition}
The \emph{symplectic double} $\mathcal{D}=\mathcal{D}_{|\mathbf{i}|}$ is the scheme over~$\mathbb{Q}$ obtained by gluing the split algebraic tori $\mathcal{D}_{\mathbf{i}'}$ for all seeds $\mathbf{i}'$ in the mutation equivalence class of~$\mathbf{i}$ using the birational maps above.
\end{definition}

In exactly the same way, the algebra $\mathcal{X}_{\mathbf{i}}^q$ becomes the Laurent polynomial ring $\mathbb{Q}[X_i^{\pm1}]_{i\in J}$ when $q=1$. Its spectrum is a split algebraic torus denoted $\mathcal{X}_{\mathbf{i}}$. As before, we get a birational map $\mathcal{X}_{\mathbf{i}}\dashrightarrow\mathcal{X}_{\mathbf{i}'}$ whenever two seeds $\mathbf{i}$ and $\mathbf{i}'$ are related by a mutation.

\begin{definition}
The \emph{cluster Poisson variety} $\mathcal{X}=\mathcal{X}_{|\mathbf{i}|}$ is the scheme over~$\mathbb{Q}$ obtained by gluing the split algebraic tori $\mathcal{X}_{\mathbf{i}'}$ for all seeds $\mathbf{i}'$  in the mutation equivalence class of~$\mathbf{i}$ using the birational maps above.
\end{definition}

As the names suggest, the cluster Poisson variety has a natural Poisson structure, and the symplectic double carries a natural symplectic form with a compatible Poisson structure. The cluster Poisson variety embeds into the symplectic double as a Lagrangian subspace~\cite{dilog}.

For generic $q$, we can use the formulas of Theorem~\ref{thm:transformation} to glue the algebras $\Frac({\mathcal{X}}_{\mathbf{i}}^q)$ and $\Frac({\mathcal{D}}_{\mathbf{i}}^q)$:
\begin{align*}
\mathcal{X}^q &= \coprod_{\mathbf{i}'\in|\mathbf{i}|}\Frac({\mathcal{X}}_{\mathbf{i}'}^q)/\text{identifications}, \\
\mathcal{D}^q &= \coprod_{\mathbf{i}'\in|\mathbf{i}|}\Frac({\mathcal{D}}_{\mathbf{i}'}^q)/\text{identifications}.
\end{align*}
The sets $\mathcal{X}^q$ and $\mathcal{D}^q$ inherit natural algebra structures and in the classical limit $q=1$ are identified with the function fields of the cluster Poisson variety and symplectic double, respectively. For $q\neq1$, we think of $\mathcal{X}^q$ and $\mathcal{D}^q$ as the function fields of corresponding ``quantum cluster varieties''.

\subsection{The disk case}

From now on, we will write $\Sigma=(\mathbb{D},\mathbb{M})$ for the data of a compact oriented disk $\mathbb{D}$ and a finite set~$\mathbb{M}$ of marked points on the boundary of~$\mathbb{D}$. As we will see in this section, there are quantum cluster varieties naturally associated to~$\Sigma$.

\begin{definition}
An \emph{ideal triangulation} of $\Sigma=(\mathbb{D},\mathbb{M})$ is a triangulation of~$\mathbb{D}$ whose vertices are precisely the points of~$\mathbb{M}$.
\end{definition}

The disk $\Sigma$ admits an ideal triangulation if and only if it has at least three marked points on its boundary. From now on, we will always assume this is the case. The illustration below shows an example of an ideal triangulation of a disk with five marked points.
\[
\xy /l1.5pc/:
{\xypolygon5"A"{~:{(-3,0):}}},
{"A5"\PATH~={**@{-}}'"A2"'"A4"},
\endxy
\]

An edge of an ideal triangulation of $\Sigma=(\mathbb{D},\mathbb{M})$ is said to be \emph{external} if it lies along the boundary of $\mathbb{D}$, connecting adjacent marked points, and is said to be \emph{internal} otherwise. For a given ideal triangulation $T$, we will write $I=I_T$ for the set of edges of $T$ and $J=J_T$ for the set of internal edges.

\begin{definition}
Let $T$ be an ideal triangulation of $\Sigma$. Then we define a skew-symmetric matrix $\varepsilon_{ij}$~($i$,~$j\in I$) by 
\[
\varepsilon_{ij}=
\begin{cases}
-1 & \mbox{if $i$, $j$ share a vertex and $i$ is immediately clockwise to $j$} \\
1 & \mbox{if $i$, $j$ share a vertex and $j$ is immediately clockwise to $i$} \\
0 & \mbox{otherwise}.
\end{cases}
\]
\end{definition}

Given an ideal triangulation $T$, we can consider the associated lattice 
\[
L=\mathbb{Z}[I].
\]
This lattice has a basis $\{e_i\}$ given by $e_i=\{i\}$ for $i\in I$ and a $\mathbb{Z}$-valued skew-symmetric bilinear form $(\cdot,\cdot)$ given on basis elements by 
\[
(e_i,e_j)=\varepsilon_{ij}.
\]
Thus we associate a seed to any ideal triangulation.

\begin{definition}
If $k$ is an internal edge of an ideal triangulation $T$, then a \emph{flip} at~$k$ is the transformation of~$T$ that removes $k$ and replaces it by the unique different edge that, together with the remaining edges, forms an ideal triangulation:
\[
\xy /l1.5pc/:
{\xypolygon4"A"{~:{(2,2):}}},
{"A1"\PATH~={**@{-}}'"A3"},
\endxy
\quad
\longleftrightarrow
\quad
\xy /l1.5pc/:
{\xypolygon4"A"{~:{(2,2):}}},
{"A2"\PATH~={**@{-}}'"A4"}
\endxy
\]
\end{definition}

It is a fact that any two ideal triangulations are related by a sequence of flips. It is therefore natural to ask how the matrix $\varepsilon_{ij}$ changes when we perform a flip at some edge~$k$. To answer this question, first note that there is a natural bijection between the edges of an ideal triangulation and the edges of the triangulation obtained by a flip at some edge. If we use this bijection to identify edges of the flipped triangulation with the set~$I$, then we have the following result.

\begin{proposition}
A flip at an edge~$k$ of an ideal triangulation changes the matrix~$\varepsilon_{ij}$ to the matrix
\[
\varepsilon_{ij}'=
\begin{cases}
-\varepsilon_{ij} & \mbox{if } k\in\{i,j\} \\
\varepsilon_{ij}+\frac{|\varepsilon_{ik}|\varepsilon_{kj}+\varepsilon_{ik}|\varepsilon_{kj}|}{2} & \mbox{if } k\not\in\{i,j\}.
\end{cases}
\]
\end{proposition}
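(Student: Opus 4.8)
The plan is to reduce the statement to Proposition~\ref{prop:matrixmutation}, which describes how the matrix $\varepsilon_{ij}$ of a seed transforms under a mutation. To do this, I would show that the combinatorial operation of a flip at an internal edge $k$ of an ideal triangulation corresponds exactly to the algebraic operation of mutation at $k$ of the associated seed. Since the two matrix-transformation formulas in the proposition and in Proposition~\ref{prop:matrixmutation} are literally the same, it suffices to verify that the matrix $\varepsilon'_{ij}$ built directly from the flipped triangulation (using the definition in terms of clockwise-adjacency, together with the canonical bijection on edges) agrees with the matrix obtained by applying the mutation rule at $k$ to $\varepsilon_{ij}$.

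The verification is local: a flip at $k$ only affects the two triangles $t_1, t_2$ sharing the edge $k$, forming a quadrilateral whose diagonal is switched. First I would fix notation for the (at most four) other edges of this quadrilateral, say $a,b,c,d$ read off cyclically, noting that some of these may coincide if the quadrilateral is glued to itself — a case one must track carefully. For each ordered pair $(i,j)$ of edges I would then compute both $\varepsilon'_{ij}$ (from the flipped triangulation) and the mutated value (from the formula in Proposition~\ref{prop:matrixmutation}) and check equality. When $k \in \{i,j\}$: an edge $i$ clockwise-adjacent to $k$ in $T$ becomes counterclockwise-adjacent to the new diagonal $k'$ in $T'$, which flips the sign, matching $\varepsilon'_{kj} = -\varepsilon_{kj}$. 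When $k \notin \{i,j\}$ and neither $i$ nor $j$ is an edge of the quadrilateral, nothing changes and both sides equal $\varepsilon_{ij}$. The remaining case, where $i,j$ are among $a,b,c,d$, is the substantive one: one checks how adjacencies around the quadrilateral are reorganized by the diagonal switch and confirms the correction term $\frac{|\varepsilon_{ik}|\varepsilon_{kj} + \varepsilon_{ik}|\varepsilon_{kj}|}{2}$ accounts for exactly the change in the number of shared triangles (with orientation).

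The main obstacle I expect is bookkeeping in the degenerate configurations — when the quadrilateral around $k$ is self-glued so that two of its sides are identified, or when an edge of the triangulation appears twice on the boundary of the union $t_1 \cup t_2$. In these situations $\varepsilon_{ik}$ can have absolute value $2$, and one must be careful that the local computation still matches the formula; drawing out the handful of such pictures and checking each is the real content. It may be cleanest to phrase the whole argument by saying: the assignment $T \mapsto (\Lambda, \{e_i\}, \{e_j\}, (\cdot,\cdot))$ sends a flip at $k$ to a mutation at $k$ (which is the essentially standard fact underlying the identification of the cluster Poisson variety with a moduli space of local systems), and then the proposition is immediate from Proposition~\ref{prop:matrixmutation}. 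If one prefers a self-contained argument, the explicit quadrilateral check above, with the degenerate cases enumerated, suffices.
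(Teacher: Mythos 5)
The paper states this proposition without proof, treating it as a standard fact (it goes back to Fock--Goncharov and Fomin--Shapiro--Thurston), so there is no argument in the paper to compare against; your local verification on the quadrilateral formed by the two triangles adjacent to $k$ is exactly the standard way to establish it, and your case analysis (the sign reversal when $k\in\{i,j\}$, triviality when neither $i$ nor $j$ borders the quadrilateral, and the explicit check for pairs among the four sides) is correct. One simplification you are entitled to but do not use: since $S$ here is a disk with marked points on its boundary, the two triangles sharing the internal edge $k$ always form an embedded quadrilateral with four pairwise distinct sides, so the self-glued configurations you flag as the main obstacle (where $|\varepsilon_{ik}|$ could equal $2$) simply do not occur. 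Those degeneracies are a genuine issue for general surfaces with punctures or self-folded triangles, but in the disk case the check reduces to the single non-degenerate picture: adjacent sides of the quadrilateral (e.g.\ $a,b$) lose their common triangle, which is compensated by the correction term $\tfrac{|\varepsilon_{ak}|\varepsilon_{kb}+\varepsilon_{ak}|\varepsilon_{kb}|}{2}=-1$, while opposite sides (e.g.\ $a,c$) contribute $\varepsilon_{ak},\varepsilon_{kc}$ of opposite signs and hence a vanishing correction. With that remark your proposal is complete and correct.
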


Thus we see that the new matrix $\varepsilon_{ij}'$ that we get after performing a flip at the edge~$k$ is the same as the matrix that we get by mutating the seed associated to $T$. It follows that the quantum cluster varieties defined above can be \emph{canonically} associated to a disk with marked points so that each seed corresponds to an ideal triangulation of the disk.

\begin{definition}
We will write $\mathcal{D}_\Sigma^q$ for the algebra $\mathcal{D}^q$ associated to $\Sigma$ in this way and $\mathcal{X}_\Sigma^q$ for the algebra $\mathcal{X}^q$ associated to~$\Sigma$.
\end{definition}

\section{Quantum cluster algebras}
\label{sec:QuantumClusterAlgebras}

\subsection{General theory of quantum cluster algebras}

Here we review the theory of quantum cluster algebras, following~\cite{BZq,Tran}. Throughout this section, $m$ and $n$ will be positive integers with $m\geq n$.

\begin{definition}
Let $k\in\{1,\dots,n\}$. We say that an $m\times n$ matrix $\mathbf{B}'=(b_{ij}')$ is obtained from an $m\times n$ matrix $\mathbf{B}=(b_{ij})$ by \emph{matrix mutation} in the direction $k$ if the entries of $\mathbf{B}'$ are given by 
\[
b_{ij}'=
\begin{cases}
-b_{ij} & \mbox{if } k\in\{i,j\} \\
b_{ij}+\frac{|b_{ik}|b_{kj}+b_{ik}|b_{kj}|}{2} & \mbox{if } k\not\in\{i,j\}.
\end{cases}
\]
In this case, we write $\mu_k(\mathbf{B})=\mathbf{B}'$.
\end{definition}

\begin{definition}
\label{def:compatibility}
Let $\mathbf{B}=(b_{ij})$ be an $m\times n$ integer matrix, and let $\Lambda=(\lambda_{ij})$ be a skew-symmetric $m\times m$ integer matrix. We say that the pair $(\Lambda,\mathbf{B})$ is \emph{compatible} if for each $j\in\{1,\dots,n\}$ and $i\in\{1,\dots,m\}$, we have
\[
\sum_{k=1}^mb_{kj}\lambda_{ki}=\delta_{ij}d_j
\]
for some positive integers $d_j$~($j\in\{1,\dots,n\}$). Equivalently, the product $\mathbf{B}^t\Lambda$ equals the $n\times m$ matrix $(D|0)$ where $D$ is the $n\times n$ diagonal matrix with diagonal entries $d_1,\dots,d_n$.
\end{definition}

Let $k\in\{1,\dots,n\}$ and choose a sign $\epsilon\in\{\pm1\}$. Denote by $E_\epsilon$ the $m\times m$ matrix with entries given by 
\[
e_{ij}=
\begin{cases}
\delta_{ij} & \mbox{if } j\neq k \\
-1 & \mbox{if } i=j=k \\
\max(0,-\epsilon b_{ik}) & \mbox{if } i\neq j=k
\end{cases}
\]
and set 
\[
\Lambda'=E_\epsilon^t\Lambda E_\epsilon.
\]

\begin{proposition}[\cite{BZq}, Proposition~3.4]
The matrix $\Lambda'$ is skew-symmetric and independent of the sign $\epsilon$. Moreover, $(\Lambda',\mu_k(\mathbf{B}))$ is a compatible pair.
\end{proposition}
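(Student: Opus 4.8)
The plan is to reproduce the argument of Berenstein and Zelevinsky. Throughout, write $B^\circ$ for the principal $n\times n$ submatrix of $\mathbf{B}$, and note that transposing the compatibility identity $\mathbf{B}^t\Lambda=(D\mid 0)$ and using skew-symmetry of $\Lambda$ gives the equivalent form $\Lambda\mathbf{B}=-\binom{D}{0}$. The skew-symmetry of $\Lambda'$ is then immediate: $(\Lambda')^t=E_\epsilon^t\Lambda^tE_\epsilon=-E_\epsilon^t\Lambda E_\epsilon=-\Lambda'$. Before the two substantive parts I would record one consequence of compatibility: computing $\mathbf{B}^t\Lambda\mathbf{B}$ two ways gives
\[
DB^\circ=(\mathbf{B}^t\Lambda)\mathbf{B}=\mathbf{B}^t(\Lambda\mathbf{B})=-\mathbf{B}^t\binom{D}{0}=-(B^\circ)^tD,
\]
so $DB^\circ$ is skew-symmetric; equivalently $d_ib_{ij}=-d_jb_{ji}$ for all $i,j\le n$, and in particular $B^\circ$ has zero diagonal.

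For the independence of the sign, observe that $E_{+1}$ and $E_{-1}$ agree away from column $k$, while in column $k$ their entries differ by $b_{ik}$; since $b_{kk}=0$ this says $G\coloneqq E_{-1}-E_{+1}=\mathbf{b}_k\mathbf{e}_k^t$, where $\mathbf{b}_k=\mathbf{B}\mathbf{e}_k$. Expanding $E_{-1}^t\Lambda E_{-1}=(E_{+1}+G)^t\Lambda(E_{+1}+G)$, the quadratic term $G^t\Lambda G=(\mathbf{b}_k^t\Lambda\mathbf{b}_k)\,\mathbf{e}_k\mathbf{e}_k^t$ vanishes because a skew form is zero on the diagonal. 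For the cross terms, compatibility gives $\Lambda\mathbf{b}_k=\Lambda\mathbf{B}\mathbf{e}_k=-d_k\mathbf{e}_k$, and the $k$-th row of $E_{+1}$ equals $-\mathbf{e}_k^t$, so (using $E_{+1}^t\mathbf{e}_k=-\mathbf{e}_k$)
\[
E_{+1}^t\Lambda G=E_{+1}^t(\Lambda\mathbf{b}_k)\,\mathbf{e}_k^t=d_k\mathbf{e}_k\mathbf{e}_k^t,
\]
which is symmetric; since $(G^t\Lambda E_{+1})^t=E_{+1}^t\Lambda^tG=-E_{+1}^t\Lambda G$, the two cross terms are negatives of each other and cancel. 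Hence $E_{-1}^t\Lambda E_{-1}=E_{+1}^t\Lambda E_{+1}$, so $\Lambda'$ does not depend on $\epsilon$, and we may compute it with whichever sign is convenient.

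For compatibility of $(\Lambda',\mu_k(\mathbf{B}))$ I would invoke the standard factorization $\mu_k(\mathbf{B})=E_\epsilon\mathbf{B}F_\epsilon$, where $F_\epsilon$ is the $n\times n$ matrix defined by the rule dual to $E_\epsilon$ (identity outside row $k$, with $F_{kk}=-1$ and $F_{kj}=\max(0,\epsilon b_{kj})$ for $j\ne k$); one checks by a direct computation with the explicit mutation rule that $E_\epsilon$, $E_\epsilon^t$, and $F_\epsilon$ are all involutions. Using $(E_\epsilon^t)^2=I$,
\[
\mu_k(\mathbf{B})^t\Lambda'=F_\epsilon^t\mathbf{B}^tE_\epsilon^t\cdot E_\epsilon^t\Lambda E_\epsilon=F_\epsilon^t\mathbf{B}^t\Lambda E_\epsilon=F_\epsilon^t(D\mid 0)E_\epsilon.
\]
Since $(D\mid 0)=D(I_n\mid 0)$, right multiplication by $E_\epsilon$ changes only column $k$ and produces $(D\widehat{E}_\epsilon\mid 0)$ for an explicit $n\times n$ matrix $\widehat{E}_\epsilon$ that is the identity outside column $k$; left multiplication by $F_\epsilon^t$ changes column $k$ once more, and the skew-symmetrizability relation $d_ib_{ik}=-d_kb_{ki}$ forces every off-diagonal entry of the resulting column $k$ to cancel while the $(k,k)$ entry is unchanged. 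Thus $F_\epsilon^t(D\mid 0)E_\epsilon=(D\mid 0)$, so $(\Lambda',\mu_k(\mathbf{B}))$ is compatible with the \emph{same} diagonal matrix $D$.

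The conceptual heart of the proof is the sign-independence: the key observation is that the compatibility identity $\Lambda\mathbf{B}=-\binom{D}{0}$ is exactly what forces $\Lambda\mathbf{b}_k$ to be a scalar multiple of $\mathbf{e}_k$, and this is precisely what makes the correction term $E_{+1}^t\Lambda G$ symmetric. The most calculation-heavy step is the final identity $F_\epsilon^t(D\mid 0)E_\epsilon=(D\mid 0)$; it is routine once skew-symmetrizability of the principal part is in hand, but it does require carefully tracking the $\max$-functions and both possible signs of the entries $b_{ik}$.
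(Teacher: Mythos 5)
The paper does not prove this statement; it is quoted directly from Berenstein--Zelevinsky \cite{BZq} (Proposition~3.4 there) as an imported result. Your argument is a correct reconstruction of the standard proof: the derivation of $d_ib_{ij}=-d_jb_{ji}$ from compatibility, the decomposition $E_{-1}=E_{+1}+\mathbf{b}_k\mathbf{e}_k^t$ with the cross terms cancelling because $\Lambda\mathbf{b}_k=-d_k\mathbf{e}_k$, and the factorization $\mu_k(\mathbf{B})=E_\epsilon\mathbf{B}F_\epsilon$ together with $F_\epsilon^t(D\mid 0)E_\epsilon=(D\mid 0)$ all check out, so no gap to report.
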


\begin{definition}[\cite{BZq}, Definition~3.5]
Let $(\Lambda,\mathbf{B})$ be a compatible pair and let $k\in\{1,\dots,n\}$. We say that the pair $(\Lambda',\mu_k(\mathbf{B}))$ is obtained from $(\Lambda,\mathbf{B})$ by \emph{mutation} in the direction~$k$ and write $\mu_k(\Lambda,\mathbf{B})=(\Lambda',\mu_k(\mathbf{B}))$.
\end{definition}

Let $L$ be a lattice of rank $m$ equipped with a skew-symmetric bilinear form $\Lambda:L\times L\rightarrow\mathbb{Z}$. Let $\omega$ be a formal variable. We can associate to these data a quantum torus algebra~$\mathcal{T}$. It is generated over $\mathbb{Z}[\omega,\omega^{-1}]$ by variables $A^v$~($v\in L$) subject to the commutation relations 
\[
A^{v_1}A^{v_2}=\omega^{-\Lambda(v_1,v_2)}A^{v_1+v_2}.
\]
In the literature on quantum cluster algebras, this quantum torus algebra is typically called a \emph{based quantum torus}, and the parameter is denoted $q^{-1/2}$, rather than $\omega$. (See~\cite{BZq,Muller,Tran} for example.) This quantum torus algebra has a noncommutative fraction field which we denote~$\mathcal{F}$.

\begin{definition}
A \emph{toric frame} in $\mathcal{F}$ is a mapping $M:\mathbb{Z}^m\rightarrow\mathcal{F}-\{0\}$ of the form 
\[
M(v)=\phi(A^{\eta(v)})
\]
where $\phi$ is a $\mathbb{Q}(\omega^{-1})$-algebra automorphism of $\mathcal{F}$ and $\eta:\mathbb{Z}^m\rightarrow L$ is an isomorphism of lattices.
\end{definition}

Note that the image $M(\mathbb{Z}^m)$ of a toric frame is a basis for an isomorphic copy of~$\mathcal{T}$ in~$\mathcal{F}$. We have the relations 
\begin{align*}
M(v_1)M(v_2) &= \omega^{-\Lambda_M(v_1,v_2)}M(v_1+v_2), \\
M(v_1)M(v_2) &= \omega^{-2\Lambda_M(v_1,v_2)}M(v_2)M(v_1), \\
M(v)^{-1} &= M(-v), \\
M(0) &=1
\end{align*}
where the form $\Lambda_M$ on $\mathbb{Z}^m$ is obtained from $\Lambda$ using the isomorphism $\eta$.

\begin{definition}
A \emph{quantum seed} is a pair $(M,\mathbf{B})$ where $M$ is a toric frame in~$\mathcal{F}$ and $\mathbf{B}$ is an $m\times n$ integer matrix such that $(\Lambda_M,\mathbf{B})$ is a compatible pair where we view $\Lambda_M$ as a matrix using the standard basis for~$\mathbb{Z}^m$.
\end{definition}

\begin{definition}
\label{def:mutatedtoricframe}
Let $(M,\mathbf{B})$ be a quantum seed and write $\mathbf{B}=(b_{ij})$. For any index $k\in\{1,\dots,n\}$ and any sign $\epsilon\in\{\pm1\}$, we define a mapping $M':\mathbb{Z}^m\rightarrow\mathcal{F}-\{0\}$ by the formulas 
\begin{align*}
M'(v) &= \sum_{p=0}^{v_k}\binom{v_k}{p}_{\omega^{-d_k}}M(E_\epsilon v+\epsilon p b^k), \\
M'(-v) &= M'(v)^{-1}
\end{align*}
where $v=(v_1,\dots,v_m)\in\mathbb{Z}^m$ is such that $v_k\geq0$ and $b^k$ denotes the $k$th column of $\mathbf{B}$. Here the $t$-binomial coefficient is given by 
\[
\binom{r}{p}_t=\frac{(t^r-t^{-r})\dots(t^{r-p+1}-t^{-r+p-1})}{(t^p-t^{-p})\dots(t-t^{-1})}.
\]
\end{definition}

\begin{proposition}[\cite{BZq}, Proposition~4.7]
The mapping $M'$ satisfies the following properties:
\begin{enumerate}
\item The mapping $M'$ is a toric frame which is independent of the sign~$\epsilon$.

\item The pair $(\Lambda_{M'},\mu_k(\mathbf{B}))$ is compatible and obtained from the pair $(\Lambda_M,\mathbf{B})$ by mutation in the direction~$k$.

\item The pair $(M',\mu_k(\mathbf{B}))$ is a quantum seed.
\end{enumerate}
\end{proposition}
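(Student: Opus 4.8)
The plan is to reduce immediately to the standard toric frame and then to realize $M'$ as the composite of a lattice automorphism with an automorphism of $\mathcal{F}$ built from the ``almost central'' element $A^{b^k}$; parts~(2) and~(3) will then fall out of Proposition~3.4 of~\cite{BZq}.

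First I would exploit covariance of the construction in Definition~\ref{def:mutatedtoricframe}: if $M=\phi\circ M_0$ for an automorphism $\phi$ of $\mathcal{F}$ and a toric frame $M_0$, then $\Lambda_M=\Lambda_{M_0}$ (so the integer $d_k$ is unchanged) and $M'=\phi\circ M_0'$. Transporting the lattice $L$ along $\eta$, we may thus assume $M(v)=A^v$, $L=\mathbb{Z}^m$, $\eta=\mathrm{id}$, and $\Lambda_M=\Lambda$. Then the sum in Definition~\ref{def:mutatedtoricframe} is easy to evaluate on basis vectors: for $i\neq k$ only the term $p=0$ contributes and the $i$th column of $E_\epsilon$ is $\mathbf{e}_i$, so $M'(\mathbf{e}_i)=A^{\mathbf{e}_i}$; for $i=k$ only $p=0,1$ contribute, and writing $c_\epsilon$ for the $k$th column of $E_\epsilon$ one reads off from the definition of $E_\epsilon$ that $c_\epsilon+\epsilon b^k=c_{-\epsilon}$, so that $M'(\mathbf{e}_k)=A^{c_+}+A^{c_-}$. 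In particular $M'$ is already independent of $\epsilon$ on the basis vectors.

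Next I would build the automorphism. The compatibility condition $\mathbf{B}^t\Lambda=(D\mid 0)$ of Definition~\ref{def:compatibility} gives $\Lambda(b^k,\mathbf{e}_i)=\delta_{ik}d_k$, so $A^{b^k}$ commutes with $A^{\mathbf{e}_i}$ for $i\neq k$ and satisfies $A^{b^k}A^{\mathbf{e}_k}=\omega^{-2d_k}A^{\mathbf{e}_k}A^{b^k}$. Since $c_+-c_-=-b^k$, we have $A^{c_+}+A^{c_-}=A^{c_\epsilon}(1+A^{\epsilon b^k})$, so I would let $\phi'$ be the map of $\mathcal{F}$ fixing $A^{\mathbf{e}_i}$ for $i\neq k$ and sending $A^{\mathbf{e}_k}$ to $(1+A^{\epsilon b^k})^{-1}A^{\mathbf{e}_k}$. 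Commutativity of $A^{b^k}$ with the $A^{\mathbf{e}_i}$, $i\neq k$, is exactly what is needed to see that $\phi'$ respects the quantum torus relations and that its inverse assignment is also well defined, hence that $\phi'$ is an automorphism of $\mathcal{F}$. Because $\det E_\epsilon=-1$, the map $\eta'=E_\epsilon$ is a lattice automorphism of $\mathbb{Z}^m$, and it remains to check that $M'(v)=\phi'(A^{E_\epsilon v})$ for all $v$. For $v_k\geq 0$ this is a computation: expanding $\phi'(A^{E_\epsilon v})$ and sliding the factors $(1+A^{\epsilon b^k})^{-1}$ past the power of $A^{\mathbf{e}_k}$ produces, up to a monomial, a product $\prod_{j=0}^{v_k-1}\bigl(1+(\text{power of }\omega)\,A^{\epsilon b^k}\bigr)$, which the quantum binomial theorem converts into exactly the sum of Definition~\ref{def:mutatedtoricframe}; the symmetry $\binom{r}{p}_t=\binom{r}{p}_{t^{-1}}$ shows that the base $\omega^{-d_k}$ there is forced. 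The case $v_k<0$ follows from $M'(-v)=M'(v)^{-1}$. This proves~(1), and since $\phi'$ is an automorphism it gives $\Lambda_{M'}(v,v')=\Lambda(E_\epsilon v,E_\epsilon v')$, i.e.\ $\Lambda_{M'}=E_\epsilon^t\Lambda E_\epsilon$ as matrices.

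Granting this identification, part~(2) is immediate from Proposition~3.4 of~\cite{BZq}: that result says $E_\epsilon^t\Lambda E_\epsilon$ is skew-symmetric, independent of $\epsilon$, and compatible with $\mu_k(\mathbf{B})$, and by definition $(E_\epsilon^t\Lambda E_\epsilon,\mu_k(\mathbf{B}))=\mu_k(\Lambda_M,\mathbf{B})$; part~(3) is then just the definition of a quantum seed. The $\epsilon$-independence of $M'$ itself follows because $M'_+$ and $M'_-$ agree on $\mathbf{e}_1,\dots,\mathbf{e}_m$ by the second paragraph and carry the same multiplicativity form $E_+^t\Lambda E_+=E_-^t\Lambda E_-$ by Proposition~3.4 of~\cite{BZq}, and a toric frame is determined by these data. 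I expect the main obstacle to be the quantum binomial identity in the last step of the third paragraph, together with the bookkeeping of powers of $\omega$ that pins down the base $\omega^{-d_k}$; everything else is formal once the covariance reduction and the ``almost-centrality'' of $A^{b^k}$ are in place.
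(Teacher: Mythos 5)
The paper does not actually prove this statement: it is imported verbatim from Berenstein--Zelevinsky (\cite{BZq}, Proposition~4.7) and used as a black box, so there is no in-paper argument to compare yours against. Judged on its own terms, your strategy --- reduce to the standard frame by covariance, exploit that $A^{b^k}$ commutes with every $A^{\mathbf{e}_i}$ except $A^{\mathbf{e}_k}$ (with which it $\omega^{2d_k}$-commutes), and exhibit $M'$ as an automorphism of $\mathcal{F}$ composed with the lattice map $E_\epsilon$ --- is the right one; it is the mechanism behind the Berenstein--Zelevinsky proof and mirrors the decomposition $\mu_k^q=\mu_k^\sharp\circ\mu_k'$ that this paper uses in Appendix~\ref{app:DerivationOfMutationFormulas}. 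The covariance reduction, the evaluation of $M'$ on basis vectors, the identity $c_\epsilon+\epsilon b^k=c_{-\epsilon}$, and the deduction of parts~(2) and~(3) from Proposition~3.4 of~\cite{BZq} once $\Lambda_{M'}=E_\epsilon^t\Lambda E_\epsilon$ is established are all correct.

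There is, however, one concrete error and one substantive omission. The automorphism you commit to, $\phi'(A^{\mathbf{e}_k})=(1+A^{\epsilon b^k})^{-1}A^{\mathbf{e}_k}$, does not satisfy $M'(v)=\phi'(A^{E_\epsilon v})$: already for $v=\mathbf{e}_k$ one has $\phi'(A^{c_\epsilon})=A^{c_\epsilon}(1+A^{\epsilon b^k})=A^{c_\epsilon}+\omega^{-\epsilon d_k}A^{c_{-\epsilon}}$, since $\Lambda(c_\epsilon,b^k)=d_k$; this differs from $M'(\mathbf{e}_k)=A^{c_+}+A^{c_-}$ by the factor $\omega^{-\epsilon d_k}$ on the second term. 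The correct normalization is $\phi'(A^{\mathbf{e}_k})=(1+\omega^{\epsilon d_k}A^{\epsilon b^k})^{-1}A^{\mathbf{e}_k}$. More importantly, the step you yourself single out as the main obstacle --- showing that for general $v$ with $v_k\geq 0$ the product $\prod_{j=0}^{v_k-1}\bigl(1+\omega^{\,\cdots}A^{\epsilon b^k}\bigr)$ obtained by sliding these factors past $A^{-v_k\mathbf{e}_k}$ reproduces precisely the coefficients $\binom{v_k}{p}_{\omega^{-d_k}}$ together with the correct global power of $\omega$ --- is where essentially all of the content of the proposition lives, and it is asserted rather than carried out. The architecture of your argument is sound and would yield the proposition once that $q$-binomial identity is verified with the corrected $\phi'$, but as written the proof is not complete.
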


\begin{definition}
Let $(M,\mathbf{B})$ be a quantum seed, and let $k\in\{1,\dots,n\}$. Let $M'$ be the mapping from Definition~\ref{def:mutatedtoricframe}, and let $\mathbf{B}'=\mu_k(\mathbf{B})$. Then we say that the quantum seed $(M',\mathbf{B}')$ is obtained from $(M,\mathbf{B})$ by \emph{mutation} in the direction~$k$.
\end{definition}

\begin{proposition}[\cite{BZq}, Proposition~4.9]
\label{prop:exchangetoricframe}
Let $(M,\mathbf{B})$ be a quantum seed, and suppose that $(M',\mathbf{B}')$ is obtained from $(M,\mathbf{B})$ by mutation in the direction~$k$. Then 
\[
M'(\mathbf{e}_k)=M\big(-\mathbf{e}_k+\sum_{i=1}^m[b_{ik}]_+\mathbf{e}_i\big) + M\big(-\mathbf{e}_k+\sum_{i=1}^m[-b_{ik}]_+\mathbf{e}_i\big)
\]
and $M'(\mathbf{e}_i)=M(\mathbf{e}_i)$ for $i\neq k$.
\end{proposition}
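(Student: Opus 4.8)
The plan is to unwind Definition~\ref{def:mutatedtoricframe} directly: the two asserted formulas are simply the cases $v=\mathbf{e}_k$ and $v=\mathbf{e}_i$ (with $i\neq k$) of the defining sum for $M'$, evaluated after making a convenient choice of the sign~$\epsilon$.

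First I would dispose of the case $v=\mathbf{e}_i$ with $i\neq k$. Here $v_k=0$, so the sum defining $M'(v)$ collapses to the single term $p=0$, namely $\binom{0}{0}_{\omega^{-d_k}}M(E_\epsilon\mathbf{e}_i)=M(E_\epsilon\mathbf{e}_i)$. Reading off the $i$th column of $E_\epsilon$ from its definition, we have $e_{ji}=\delta_{ji}$ for every~$j$ (since $i\neq k$), so $E_\epsilon\mathbf{e}_i=\mathbf{e}_i$ and hence $M'(\mathbf{e}_i)=M(\mathbf{e}_i)$.

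Next I would treat $v=\mathbf{e}_k$. Now $v_k=1$, so the sum runs over $p\in\{0,1\}$, and since $\binom{1}{0}_t=\binom{1}{1}_t=1$ for $t=\omega^{-d_k}$ (a length-one ratio of $t$-factorial factors cancels), we obtain
\[
M'(\mathbf{e}_k)=M(E_\epsilon\mathbf{e}_k)+M(E_\epsilon\mathbf{e}_k+\epsilon b^k).
\]
The $k$th column of $E_\epsilon$ has overriding diagonal entry $e_{kk}=-1$ and off-diagonal entries $e_{ik}=\max(0,-\epsilon b_{ik})=[-\epsilon b_{ik}]_+$; since $b_{kk}=0$ on the principal part, the $i=k$ term of the truncated sum vanishes and $E_\epsilon\mathbf{e}_k=-\mathbf{e}_k+\sum_i[-\epsilon b_{ik}]_+\mathbf{e}_i$. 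Adding $\epsilon b^k=\epsilon\sum_i b_{ik}\mathbf{e}_i$ and applying the elementary identity $[-x]_++x=[x]_+$ with $x=\epsilon b_{ik}$ gives $E_\epsilon\mathbf{e}_k+\epsilon b^k=-\mathbf{e}_k+\sum_i[\epsilon b_{ik}]_+\mathbf{e}_i$. Therefore
\[
M'(\mathbf{e}_k)=M\big(-\mathbf{e}_k+\textstyle\sum_i[-\epsilon b_{ik}]_+\mathbf{e}_i\big)+M\big(-\mathbf{e}_k+\textstyle\sum_i[\epsilon b_{ik}]_+\mathbf{e}_i\big),
\]
and specializing to $\epsilon=1$ (the answer is independent of the sign by Proposition~4.7 of~\cite{BZq}, and addition is commutative, so the stated order of the two terms is irrelevant) yields exactly the claimed formula.

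There is no substantive obstacle: the statement is a direct computation from the definitions. The only points demanding a little care are the evaluation of the two $\omega^{-d_k}$-binomial coefficients $\binom{1}{0}$ and $\binom{1}{1}$, and the bookkeeping of the $k$th column of $E_\epsilon$ — in particular remembering that the diagonal entry is the overriding value $-1$, and that the identity $[-x]_++x=[x]_+$ is what converts the $(-\epsilon)$-truncations appearing in $E_\epsilon\mathbf{e}_k$ into the $(+\epsilon)$-truncations once $\epsilon b^k$ is added.
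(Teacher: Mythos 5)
Your computation is correct: the paper states this result as a citation of Proposition~4.9 of~\cite{BZq} and gives no proof of its own, and your direct unwinding of Definition~\ref{def:mutatedtoricframe} (the $p\in\{0,1\}$ terms for $v=\mathbf{e}_k$, the trivial $t$-binomials, the $k$th column of $E_\epsilon$, and the identity $[-x]_++x=[x]_+$) is exactly the standard verification. The only implicit ingredient is $b_{kk}=0$, which you correctly flag and which holds for compatible pairs, so there is no gap.
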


\begin{definition}
Denote by $\mathbb{T}_n$ an $n$-regular tree with edges labeled by the numbers $1,\dots,n$ in such a way that the $n$ edges emanating from any vertex have distinct labels. A \emph{quantum cluster pattern} is an assignment of a quantum seed $\mathbf{i}_t=(M_t,\mathbf{B}_t)$ to each vertex $t\in\mathbb{T}_n$ so that if $t$ and $t'$ are vertices connected by an edge labeled~$k$, then $\mathbf{i}_{t'}$ is obtained from $\mathbf{i}_t$ by a mutation in the direction~$k$.
\end{definition}

Given a quantum cluster pattern, let us define $A_{i;t}=M_t(\mathbf{e}_i)$. For $i\in\{n+1,\dots,m\}$, we have $A_{i;t}=A_{i;t'}$ for all $t$,~$t'\in\mathbb{T}_n$, so we may omit one of the subscripts and write $A_i=A_{i;t}$ for all $t\in\mathbb{T}_n$. Let 
\[
\mathcal{S}=\{A_{i;t}:i\in\{1,\dots,n\},t\in\mathbb{T}_n\}.
\]

\begin{definition}[\cite{BZq}, Definition~4.12]
Given a quantum cluster pattern $t\mapsto(M_t,\mathbf{B}_t)$, the associated \emph{quantum cluster algebra} $\mathcal{A}$ is the $\mathbb{Z}[\omega^{\pm1},A_{n+1}^{\pm1},\dots,A_m^{\pm1}]$-subalgebra of the ambient skew-field~$\mathcal{F}$ generated by elements of $\mathcal{S}$.
\end{definition}

\subsection{Quantum $F$-polynomials}
\label{sec:QuantumFPolynomials}

One of the important tools that we apply in our construction of the map $\mathbb{I}_{\mathcal{D}}^q$ is the notion of a quantum $F$-polynomial from~\cite{Tran}. This extends Fomin and Zelevinsky's notion of $F$-polynomial~\cite{FZIV} to the noncommutative setting and allows us to express a generator $A_{j;t}$ of a quantum cluster algebra in terms of the generators associated with an initial seed.

\begin{theorem}[\cite{Tran}, Theorem~5.3]
\label{thm:quantumF}
Let $(M_0,\mathbf{B}_0)$ be an initial quantum seed in a quantum cluster algebra $\mathcal{A}$ and write $\mathbf{B}_0=(b_{ij})$. Then there exists an integer $\lambda_{j;t}\in\mathbb{Z}$ and a polynomial~$F_{j;t}$ in the variables 
\[
Y_j=M_0\big(\sum_i b_{ij}\mathbf{e}_i\big)
\]
with coefficients in $\mathbb{Z}[\omega,\omega^{-1}]$ such that the cluster variable $A_{j;t}\in\mathcal{A}$ is given by 
\[
A_{j;t}=\omega^{\lambda_{j;t}}F_{j;t}\cdot M_0(\mathbf{g}_{j;t})
\]
where $\mathbf{g}_{j;t}$ is an integer vector called the extended $\mathbf{g}$-vector of~$A_{j;t}$.
\end{theorem}

The polynomial $F_{j;t}$ appearing in the theorem is known as a \emph{quantum $F$-polynomial}. In this paper, we restrict attention to quantum cluster algebras where the matrix $\mathbf{B}$ arises from a triangulation of a disk. For such algebras, we have the following refinement of Theorem~\ref{thm:quantumF}.

\begin{corollary}
\label{cor:Fpositivity}
Let $\mathcal{A}$ be a quantum cluster algebra associated to a disk, and suppose the matrix~$D$ appearing in the compatibility condition of Definition~\ref{def:compatibility} is four times the identity. Then there exists a polynomial $F_{j;t}$ in the variables $Y_1,\dots,Y_n$ with coefficients in $\mathbb{Z}_{\geq0}[\omega^4,\omega^{-4}]$ such that the cluster variable $A_{j;t}\in\mathcal{A}$ is given by 
\[
A_{j;t}=F_{j;t}\cdot M_0(\mathbf{g}_{j;t}).
\]
\end{corollary}

\begin{proof}
In the disk case, it is known that each classical $F$-polynomial has nonzero constant term (for example by~\cite{MSW1}). Hence, by~\cite{Tran}, Theorem~6.1, we have $\lambda_{j;t}=0$ in Theorem~\ref{thm:quantumF}. By~\cite{Tran}, Theorem~7.4, we know that the coefficients of $F_{j;t}$ are Laurent polynomials in $\omega^4$ with positive integral coefficients.
\end{proof}

\section{The skein algebra and laminations}
\label{sec:TheSkeinAlgebraAndLaminations}

\subsection{Definition of the skein algebra}

The quantum cluster algebras that we consider in this paper all arise from the skein algebra of~$\Sigma$. Here we will review the the general theory of skein algebras, following~\cite{Muller}.

\begin{definition}
\label{def:curve}
By a \emph{curve} in $\Sigma=(\mathbb{D},\mathbb{M})$, we mean an immersion $C\rightarrow\mathbb{D}$ of a compact, connected, one-dimensional manifold $C$ with (possibly empty) boundary into $\mathbb{D}$. We require that any boundary points of~$C$ map to marked points in $\mathbb{M}$ and no point in the interior of~$C$ maps to a marked point. By a \emph{homotopy} of two curves $\alpha$ and $\beta$, we mean a homotopy of $\alpha$ and $\beta$ within the class of such curves. Two curves are said to be \emph{homotopic} if they can be related by homotopy and orientation-reversal.
\end{definition}

\begin{definition}
A \emph{multicurve} on~$\Sigma$ is an unordered finite set of curves on~$\Sigma$ which may contain duplicates. Two multicurves on~$\Sigma$ are \emph{homotopic} if there is a bijection between their constituent curves which takes each curve to another which is homotopic in the sense of the previous definition.
\end{definition}

\begin{definition}
Let $\Sigma=(\mathbb{D},\mathbb{M})$. By a \emph{strand} of a multicurve in~$\Sigma$ near a point $p\in\mathbb{D}$, we mean a component of the intersection of the multicurve with a small disk around~$p$. A \emph{framed link} is a multicurve such that each intersection of strands is transverse and 
\begin{enumerate}
\item At each crossing, there is an ordering of the strands.
\item At each marked point, there is an equivalence relation on the strands and an ordering on equivalence classes of strands.
\end{enumerate}
By a \emph{homotopy} of framed links, we mean a homotopy through the class of multicurves with transverse intersections where the crossing data are not changed.
\end{definition}

When drawing pictures of framed links, we indicate the ordering of strands at a transverse intersection or marked point by making one strand pass ``over'' the other:
\[
\xygraph{
    !{0;/r2.5pc/:}
    [u(0.5)]!{\xoverv}
}
\qquad
\qquad
\xy 
(0,5)*{}; (5,-5)*{} **\crv{(0,5)&(5,-5)};
\POS?(1)*{\hole}="x"; 
(10,5)*{}; "x" **\crv{}; 
(0,-5)*{}; (10,-5)*{} **\dir{.};
(5,-5.5)*{\sbullet};
\endxy
\]
(In the second of these pictures, the dotted line indicates a portion of $\partial\mathbb{D}$ containing a marked point.)
If two strands are identified by the equivalence relation at a marked point, we indicate this as in the following picture: 
\[
\xy 
(0,5)*{}; (-5,-5)*{} **\crv{(0,5)&(-5,-5)};
(-10,5)*{}; (-5,-5)*{} **\crv{}; 
(0,-5)*{}; (-10,-5)*{} **\dir{.};
(-5,-5.5)*{\sbullet};
\endxy
\]

\begin{definition}
A multicurve in $\Sigma=(\mathbb{D},\mathbb{M})$ with transverse intersections is said to be \emph{simple} if it has no intersections in the interior of~$\mathbb{D}$ and no contractible curves. Note that any simple multicurve can be regarded as a framed link in which, for every marked point~$p$, the strands incident to~$p$ are identified by the equivalence relation.
\end{definition}

\begin{definition}
\label{def:skein}
Let us write $\mathcal{K}(\Sigma)$ for the free $\mathbb{Z}[\omega,\omega^{-1}]$-module generated by equivalence classes of framed links in~$\Sigma=(\mathbb{D},\mathbb{M})$. The \emph{skein module} $\mathrm{Sk}_\omega(\Sigma)$ is defined as the quotient of $\mathcal{K}(\Sigma)$ by the following local relations. In each of these expressions, we depict the portion of a framed link over a small disk in~$\mathbb{D}$. The framed links appearing in a given relation are assumed to be identical to each other outside of the small disk. In the last two relations, the dotted line segment represents a portion of $\partial\mathbb{D}$. In these pictures, there may be additional undrawn curves ending at marked points, provided their order with respect to the drawn curves and each other does not change.
\begin{align*}
\xygraph{
    !{0;/r2.5pc/:}
    [u(0.5)]!{\xoverv}
}
\quad
&=
\quad
\omega^{-2}
\quad
\xygraph{
    !{0;/r2.5pc/:}
    [u(0.5)]!{\xunoverv}
}
\quad
+
\quad
\omega^2
\quad
\xygraph{
    !{0;/r2.5pc/:}
    [u(0.5)]!{\xunoverh}
} \\
\xygraph{
    !{0;/r2.5pc/:}
    !{\vcap-}
    !{\vcap}
}
\quad
&=
\quad
-(\omega^4+\omega^{-4}) \\
\omega
\quad
{\xy 
(0,5)*{}; (-5,-5)*{} **\crv{(0,5)&(-5,-5)};
\POS?(1)*{\hole}="x"; 
(-10,5)*{}; "x" **\crv{}; 
(0,-5)*{}; (-10,-5)*{} **\dir{.};
(-5,-5.5)*{\sbullet};
\endxy}
\quad
&=
\quad
{\xy 
(0,5)*{}; (-5,-5)*{} **\crv{(0,5)&(-5,-5)};
(-10,5)*{}; (-5,-5)*{} **\crv{}; 
(0,-5)*{}; (-10,-5)*{} **\dir{.};
(-5,-5.5)*{\sbullet};
\endxy}
\quad
=
\quad
\omega^{-1}
\quad
{\xy 
(0,5)*{}; (5,-5)*{} **\crv{(0,5)&(5,-5)};
\POS?(1)*{\hole}="x"; 
(10,5)*{}; "x" **\crv{}; 
(0,-5)*{}; (10,-5)*{} **\dir{.};
(5,-5.5)*{\sbullet};
\endxy} \\
{\xy 
(-2,2)*{}; (-5,-5)*{} **\crv{(-2,2)&(-5,-5)};
\POS?(1)*{\hole}="x"; 
(-8,2)*{}; "x" **\crv{}; 
(-2,2)*{}; (-8,2)*{} **\crv{(0,8)&(-10,8)};
(0,-5)*{}; (-10,-5)*{} **\dir{.};
(-5,-5.5)*{\sbullet};
\endxy}
\quad
&=
\quad
{\xy 
(-2,2)*{}; (-5,-5)*{} **\crv{(-2,2)&(-5,-5)};
(-8,2)*{}; (-5,-5)*{} **\crv{}; 
(-2,2)*{}; (-8,2)*{} **\crv{(0,8)&(-10,8)};
(0,-5)*{}; (-10,-5)*{} **\dir{.};
(-5,-5.5)*{\sbullet};
\endxy}
\quad
=
\quad
{\xy 
(2,2)*{}; (5,-5)*{} **\crv{(2,2)&(5,-5)};
\POS?(1)*{\hole}="x"; 
(8,2)*{}; "x" **\crv{}; 
(2,2)*{}; (8,2)*{} **\crv{(0,8)&(10,8)};
(0,-5)*{}; (10,-5)*{} **\dir{.};
(5,-5.5)*{\sbullet};
\endxy}
\quad
=
\quad 0
\end{align*}
If $K$ is a framed link in $\Sigma$, then the class of $K$ in $\mathrm{Sk}_\omega(\Sigma)$ will be denoted $[K]$.
\end{definition}

Suppose $K$ and $L$ are two links such that the union of the underlying multicurves has transverse intersections. Then the \emph{superposition} $K\cdot L$ is the framed link whose underlying multicurve is the union of the underlying multicurves of $K$ and $L$, with each strand of $K$ crossing over each strand of $L$ and all other crossings ordered as in~$K$ and~$L$.

\begin{proposition}[\cite{Muller}, Proposition~3.5]
\label{prop:superpositionhomotopy}
$[K\cdot L]$ depends only on the homotopy classes of~$K$ and~$L$.
\end{proposition}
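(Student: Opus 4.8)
The plan is to reduce the statement to an invariance check under the moves that generate homotopy of framed links, applied separately to $K$ and to $L$. Recall that $[K \cdot L]$ is an element of $\mathrm{Sk}_\omega(S)$, so it is already invariant under the skein relations and under homotopies of the superposed link $K \cdot L$ itself. What must be shown is the stronger statement that if we replace $K$ by a homotopic link $K'$ (keeping $L$ fixed, and arranging transversality), then $[K \cdot L] = [K' \cdot L]$, and symmetrically in $L$. Since any homotopy of $K$ can be realized as a finite sequence of elementary moves — an ambient isotopy of the interior of $S$, the three Reidemeister moves, and the moves near the marked points of $\partial S$ (sliding a strand endpoint along an equivalence class, and the Reidemeister-type moves involving the boundary permitted by the skein relations) — it suffices to check that each such elementary move on $K$, performed in the complement of $L$ or across strands of $L$, does not change the class of the superposition.

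First I would handle the moves that occur in a small disk disjoint from all strands of $L$: these are exactly homotopies of the link $K \cdot L$ supported away from $L$, so invariance of $[K \cdot L]$ is immediate from the fact that $\mathrm{Sk}_\omega(S)$ is defined modulo homotopy of framed links. The substance is in the moves where a strand of $K$ is pushed across a strand (or a crossing, or an endpoint) of $L$. The key point here is the definition of superposition: \emph{every} strand of $K$ crosses \emph{over} every strand of $L$. Therefore, when an arc of $K$ slides across an arc of $L$, both the ``before'' and the ``after'' configurations have all the relevant $K$-over-$L$ crossings, and I would argue the two superpositions are related by a genuine homotopy of framed links (the strand of $K$ can be pushed over the strand of $L$ without any crossing change, since it is already on top) together possibly with a Reidemeister~II move that creates or destroys a canceling pair of $K$-over-$L$ crossings. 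A Reidemeister~II move is among the relations defining $\mathrm{Sk}_\omega(S)$ — it follows from the Kauffman-type crossing relation together with the contractible-loop relation — so the class is unchanged. The case where the endpoint of a $K$-strand is slid past an endpoint of an $L$-strand at a marked point is handled the same way, using the boundary skein relations and the fact that the consistent ordering at each endpoint in the superposition still has $K$ above $L$.

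The main obstacle I anticipate is bookkeeping the ordering data carefully at crossings and at marked points: one must verify that after performing an elementary move on $K$ and then re-superposing, the resulting crossing orders genuinely match those produced by homotoping the superposition, with no residual ``$L$-over-$K$'' crossing introduced and no reordering at a marked point that would change the class. In other words, the real content is checking that ``slide a strand of $K$ across $L$'' commutes with ``put all of $K$ over all of $L$,'' up to the skein relations. I would organize this as a short case analysis over the types of elementary moves, noting that the over-crossing convention is precisely what makes each case collapse to a homotopy of framed links plus (at most) one application of the Reidemeister~II relation. Finally, since the argument is symmetric under exchanging $K$ and $L$ — superposition with $K$ over $L$ versus $L$ over $K$ differ only by crossing changes, and one checks homotopy-invariance in the second variable the same way — the claim that $[K\cdot L]$ depends only on the two homotopy classes follows.
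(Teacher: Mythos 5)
Your argument is correct and is essentially the proof given in the cited source (Muller, Proposition~3.5), which this paper quotes without reproving: one decomposes the homotopy of $K$ relative to the fixed transverse configuration with $L$ into generic elementary moves (tangencies and triple points, plus their boundary analogues at marked points) and checks that each is absorbed by the skein relations, the over-crossing convention guaranteeing that only the ``consistent'' Reidemeister~II and~III configurations ever arise. The only small caution is your passing mention of ``the three Reidemeister moves'': Reidemeister~I is neither available (it rescales the class) nor needed, since a homotopy of framed links in the sense of Definition~3.3 keeps the crossing data of $K$ itself fixed, so only moves involving a strand of $L$ occur --- your case analysis already reflects this.
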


\begin{definition}
For any framed links $K$ and $L$, choose homotopic links $K'$ and $L'$ such that the union of the multicurves underlying $K'$ and $L'$ is transverse. Then the \emph{superposition product} is defined by 
\[
[K][L]\coloneqq[K'\cdot L'].
\]
This extends to a product on $\mathrm{Sk}_\omega(\Sigma)$ by bilinearity.
\end{definition}

Note that the superposition product is well defined and independent of the choice of~$K'$ and~$L'$ by Proposition~\ref{prop:superpositionhomotopy}.

\subsection{Relation to quantum cluster algebras}

We conclude this section by describing the relation, first discovered by Muller in~\cite{Muller}, between skein algebras and quantum cluster algebras. As usual, we take $\Sigma=(\mathbb{D},\mathbb{M})$ to be a disk with finitely many marked points on its boundary.

\begin{definition}
Let $T$ be an ideal triangulation of $\Sigma$. For any $i\in I$ and $j\in J$, we define 
\[
b_{ij}=
\begin{cases}
1 & \mbox{if $i$, $j$ share a vertex and $i$ is immediately clockwise to $j$} \\
-1 & \mbox{if $i$, $j$ share a vertex and $j$ is immediately clockwise to $i$} \\
0 & \mbox{otherwise}.
\end{cases}
\]
These are entries of an $|I|\times|J|$ matrix which we denote $\mathbf{B}_T$.
\end{definition}

\begin{definition}
Let $T$ be an ideal triangulation of $\Sigma$. For $i$,~$j\in I$, we define 
\[
\lambda_{ij}=
\begin{cases}
1 & \mbox{if $i$, $j$ share a vertex and $i$ is clockwise to $j$} \\
-1 & \mbox{if $i$, $j$ share a vertex and $j$ is clockwise to $i$} \\
0 & \mbox{otherwise}.
\end{cases}
\]
These are entries of a skew-symmetric $|I|\times|I|$ matrix which we denote $\Lambda_T$. We will use the same notation $\Lambda_T$ for the associated skew-symmetric bilinear form $\mathbb{Z}^{I}\times\mathbb{Z}^{I}\rightarrow\mathbb{Z}$.
\end{definition}

Note that the word ``clockwise'' in the definition of $\Lambda_T$ does not necessarily mean ``immediately clockwise''. This definition makes sense because we work on a disk and therefore $i$ cannot be both clockwise and counterclockwise to~$j$.

\begin{proposition}[\cite{Muller}, Proposition~7.8]
\label{prop:compatibility}
The matrices $\Lambda_T$ and $\mathbf{B}_T$ satisfy the compatibility condition 
\[
\sum_kb_{kj}\lambda_{ki}=4\delta_{ij}.
\]
\end{proposition}

From now on, we will write $\mathcal{F}$ for the skew-field of fractions of the skein algebra $\mathrm{Sk}_\omega(\Sigma)$. By Theorem~6.14 of~\cite{Muller}, it is isomorphic to the skew field of fractions of the quantum torus associated with the form $\Lambda_T$ for any ideal triangulation~$T$.

\begin{definition}
For any ideal triangulation $T$ and any vector $v=(v_1,\dots,v_m)\in\mathbb{Z}_{\geq0}^{I}$, we will write $T^v$ for the simple multicurve having $v_i$-many curves homotopic to $i\in I$ and no other components. The corresponding class~$[T^v]$ is called a \emph{monomial} in the triangulation~$T$. More generally, we write 
\[
[T^{u'-u}]=\omega^{-\Lambda_T(u,u')}[T^u]^{-1}[T^{u'}].
\]
This is well defined and provides a map $M_T:\mathbb{Z}^{I}\rightarrow\mathcal{F}-\{0\}$ given by $M_T(v)=[T^v]$.
\end{definition}

With these definitions, the pair $(\mathbf{B}_T,M_T)$ is a quantum seed and $\Lambda_T$ is the compatibility matrix associated to the toric frame~$M_T$.

\begin{proposition}[\cite{Muller}, Theorem~7.9]
\label{prop:quantumflip}
Let $T$ be an ideal triangulation of $S$, and let $T'$ be the ideal triangulation obtained from~$T$ by performing a flip of the edge~$k$. Then the quantum seed $(\mathbf{B}_{T'},M_{T'})$ is obtained from $(\mathbf{B}_T,M_T)$ by a mutation in the direction~$k$.
\end{proposition}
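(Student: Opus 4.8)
The plan is to verify separately that the exchange matrix $\mathbf{B}_{T'}$ is the matrix mutation of $\mathbf{B}_T$ in the direction $k$ and that the toric frame $M_{T'}$ is the toric-frame mutation of $M_T$ in the sense of Berenstein and Zelevinsky (see~\cite{BZq} and Appendix~\ref{sec:ReviewOfQuantumClusterAlgebras}); since a toric frame determines the compatibility matrix with which it is compatible, the identity $\Lambda_{T'}=\mu_k(\Lambda_T)$ will then follow for free. The matrix part is immediate: the entries $b_{ij}$ are governed by the same ``immediately clockwise / immediately counterclockwise'' incidence data as the entries $\varepsilon_{ij}$ (up to an overall sign), and matrix mutation commutes with that sign change, so the fact that a flip acts on $\varepsilon_{ij}$ by the formula of Proposition~\ref{prop:matrixmutation} shows at once that $\mathbf{B}_{T'}$ is obtained from $\mathbf{B}_T$ by mutation in the direction $k$.

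It remains to show $M_{T'}=\mu_k(M_T)$. Both frames take the value $M_T(e_i)$ on $e_i$ for each $i\neq k$ --- the first because the edge $i$ is unchanged by the flip, the second by the mutation rule for toric frames --- so everything reduces to identifying $M_{T'}(e_k)$, the class of the edge of $T'$ produced by the flip, with
\[
\mu_k(M_T)(e_k)=M_T\Bigl(-e_k+\sum_{i:\,b_{ik}>0}b_{ik}\,e_i\Bigr)+M_T\Bigl(-e_k-\sum_{i:\,b_{ik}<0}b_{ik}\,e_i\Bigr)
\]
(up to the coefficient conventions recalled in the appendix). To prove this I would realize $k$ and the flipped edge $k'$ as the two diagonals of the quadrilateral $Q$ bounded by the two triangles of $T$ incident to $k$, drawn so as to cross in a single transverse point, and resolve that crossing by the first relation of Definition~\ref{def:skein}. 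This writes the superposition product $[k][k']$ as an $\omega$-linear combination of the two simple multicurves coming from the two smoothings, each of which is the disjoint union of one of the two pairs of opposite sides of $Q$; multiplying on the left by $[k]^{-1}$ in $\mathcal{F}$ and rewriting each $[k]^{-1}M_T(v)$ as a monomial $M_T(v-e_k)$ by means of the twist relation used to extend $M_T$ to all of $\mathbb{Z}^I$, one arrives at an identity of exactly the displayed shape. That it is the displayed identity on the nose rests on three observations, all particular to the disk: for a disk the four sides of $Q$ are distinct edges of $T$; one pair of opposite sides of $Q$ consists of the two edges $i$ with $b_{ik}>0$ and the other pair of the two edges with $b_{ik}<0$; and the powers of $\omega$ coming from the smoothing, from the twist factors $\omega^{-\Lambda_T(\cdot,\cdot)}$ built into $M_T$, and from the chosen identification of $\omega$ with a power of the quantum parameter all cancel, by the explicit form of $\Lambda_T$ --- equivalently, by the compatibility identity $\sum_k b_{kj}\lambda_{ki}=4\delta_{ij}$.

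I expect the main obstacle to be precisely this last bookkeeping: keeping careful track of orientations and crossing signs in the smoothing relation, of the twist exponents hidden inside $M_T$, and of the reordering needed to turn noncommutative products into monomials, and then checking that the net power of $\omega$ on each of the two terms is trivial, so that the coefficients match those of the Berenstein--Zelevinsky exchange relation. A smaller issue to dispose of is the behaviour near the marked points: when some sides of $Q$ lie along $\partial S$, the arcs produced by the smoothing must be homotoped onto the corresponding external edges, and this is exactly what the boundary relations of Definition~\ref{def:skein} take care of.
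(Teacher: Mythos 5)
The paper does not prove this proposition itself --- it is quoted directly from Muller (Theorem~7.9 of~\cite{Muller}), whose proof is exactly the argument you outline: the matrix part follows from the clockwise-incidence description of $b_{ij}$, and the toric-frame part follows by resolving the single crossing of the two diagonals of the quadrilateral via the first skein relation, identifying the two smoothings with the pairs of opposite sides singled out by the signs of $b_{ik}$, and checking that the $\omega$-exponents match the Berenstein--Zelevinsky exchange relation. Your outline, including the correct identification of the quadrilateral's opposite sides with $\{i:b_{ik}>0\}$ and $\{i:b_{ik}<0\}$ and the acknowledgment that the $\omega$-bookkeeping is the real work, is a faithful reconstruction of that proof.
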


It follows from Proposition~\ref{prop:quantumflip} that there is a quantum cluster algebra $\mathcal{A}$ canonically associated to~$\Sigma$. This algebra is generated by the cluster variables inside of the skew-field of fractions of the skein algebra.

\subsection{Laminations}
\label{sec:Laminations}

In order to construct the maps $\mathbb{I}_{\mathcal{A}}^q$ and $\mathbb{I}_{\mathcal{D}}^q$, we need to define the sets $\mathcal{A}_\Sigma(\mathbb{Z}^t)$ and $\mathcal{D}_\Sigma(\mathbb{Z}^t)$. As explained in the introduction, these parametrize points of~$\mathcal{A}_\Sigma$ and~$\mathcal{D}_\Sigma$ valued in the semifield~$\mathbb{Z}^t$ of tropical integers. Further explanation can be found in~\cite{Dlam,Dmoduli,dual,double}.

\begin{definition}
Let $\Sigma=(\mathbb{D},\mathbb{M})$ be a marked bordered surface consisting of a disk~$\mathbb{D}$ with a finite set $\mathbb{M}$ of marked points on its boundary. An \emph{$\mathcal{A}$-lamination} is defined as a collection of mutually nonintersecting and nonhomotopic curves on~$\Sigma$ connecting points of~$\mathbb{M}$, each equipped with a nonzero integral weight and subject to the following conditions:
\begin{enumerate}
\item If a curve is not homotopic to a segment on~$\partial\mathbb{D}$ connecting adjacent marked points, then its weight must be positive.
\item The total weight of all curves incident to a given marked point is zero.
\end{enumerate}
Two $\mathcal{A}$-laminations are considered to be equivalent if there is a bijection between their constituent curves which sends each curve to a homotopic one with the same weight. We write $\mathcal{A}_\Sigma(\mathbb{Z}^t)$ for the set of equivalence classes of $\mathcal{A}$-laminations.
\end{definition}

The diagram below shows a typical example of an $\mathcal{A}$-lamination associated to a disk with five marked points.
\[
\xy /l1.5pc/:
{\xypolygon5"A"{~:{(-3,0):}~>{.}}},
{\xypolygon5"B"{~:{(-3.5,0):}~>{}}},
"A2";"A4" **\crv{(1,-1) & (2,1)};
"A4";"A5" **\crv{(2,2) & (0,2)};
"A5";"A1" **\crv{(-0.5,2) & (-1,0)};
"A1";"A2" **\crv{(-1,-1) & (1,-2.5)};
"A1"*{\bullet}, 
"A2"*{\bullet}, 
"A3"*{\bullet}, 
"A4"*{\bullet}, 
"A5"*{\bullet}, 
(2,-0.5)*{+1}, 
(-0.5,0.5)*{+1}, 
(1,1.8)*{-1}, 
(0,-1.25)*{-1}, 
"B1"*{p_5}, 
"B2"*{p_1}, 
"B3"*{p_2}, 
"B4"*{p_3}, 
"B5"*{p_4}, 
\endxy
\]

In the next definition, we let $\Sigma=(\mathbb{D},\mathbb{M})$ be a disk with finitely many marked points on its boundary and let $\Sigma^\circ=(\mathbb{D}^\circ,\mathbb{M}^\circ)$ where $\mathbb{D}^\circ$ is the disk~$\mathbb{D}$ equipped with the opposite orientation and $\mathbb{M}^\circ$ is the image of~$\mathbb{M}$ under the tautological map $\mathbb{D}\rightarrow\mathbb{D}^\circ$. 

\begin{definition}
Let $\Sigma$ and $\Sigma^\circ$ be as above. A \emph{$\mathcal{D}$-lamination} is defined as a collection of mutually nonintersecting and nonhomotopic curves on~$\Sigma$ and~$\Sigma^\circ$ connecting the marked points, each equipped with a positive integral weight and subject to the following conditions:
\begin{enumerate}
\item The total weight of curves on~$\Sigma$ incident to a given marked point in~$\mathbb{M}$ is the same as the total weight of the curves on~$\Sigma^\circ$ incident to the corresponding point in~$\mathbb{M}^\circ$.
\item The collection does not include a curve connecting adjacent points of~$\mathbb{M}$ together with a curve connecting the corresponding points of~$\mathbb{M}^\circ$.
\end{enumerate}
Two $\mathcal{D}$-laminations are considered to be equivalent if there is a bijection between their constituent curves which sends any curve on~$\Sigma$ (respectively, $\Sigma^\circ$) to a homotopic curve on~$\Sigma$ (respectively, $\Sigma^\circ$) with the same weight. We write $\mathcal{D}_\Sigma(\mathbb{Z}^t)$ for the set of equivalence classes of $\mathcal{D}$-laminations.
\end{definition}

The diagrams below show a typical example of an $\mathcal{D}$-lamination associated to a disk with five marked points. Here we label a point of~$\mathbb{M}$ and the corresponding point of~$\mathbb{M}^\circ$ by the same symbol.
\[
\xy /l1.5pc/:
{\xypolygon5"A"{~:{(-3,0):}~>{.}}},
{\xypolygon5"B"{~:{(-3.5,0):}~>{}}},
"A4";"A5" **\crv{(2,2) & (0,2)};
"A1";"A2" **\crv{(-1,-1) & (1,-2.5)};
"A1"*{\bullet}, 
"A2"*{\bullet}, 
"A3"*{\bullet}, 
"A4"*{\bullet}, 
"A5"*{\bullet}, 
(1,1.8)*{+1}, 
(0,-1.25)*{+1}, 
"B1"*{p_5}, 
"B2"*{p_1}, 
"B3"*{p_2}, 
"B4"*{p_3}, 
"B5"*{p_4}, 
(5.5,0)*{\Sigma:}, 
\endxy
\qquad
\xy /l1.5pc/:
{\xypolygon5"A"{~:{(-3,0):}~>{.}}},
{\xypolygon5"B"{~:{(-3.5,0):}~>{}}},
"A2";"A5" **\crv{(1,-1) & (0,1)};
"A4";"A3" **\crv{(2.5,2) & (3,0)};
"A1"*{\bullet}, 
"A2"*{\bullet}, 
"A3"*{\bullet}, 
"A4"*{\bullet}, 
"A5"*{\bullet}, 
(0,-0.5)*{+1}, 
(2.5,0.5)*{+1}, 
"B1"*{p_2}, 
"B2"*{p_1}, 
"B3"*{p_5}, 
"B4"*{p_4}, 
"B5"*{p_3}, 
(5.5,0)*{\Sigma^\circ:}, 
\endxy
\]

The above definitions are equivalent to the definitions in~\cite{dual,Dlam}. In~\cite{Dlam}, a $\mathcal{D}$-lamination was understood as a collection of closed loops on the surface $\Sigma_{\mathcal{D}}$ obtained by gluing $\Sigma$ and~$\Sigma^\circ$ along corresponding boundary edges.

\section{Duality map for the quantum Poisson variety}
\label{sec:DualityMapForTheQuantumPoissonVariety}

\subsection{Realization of the cluster Poisson variety}

We have associated a quantum cluster algebra $\mathcal{A}$ to the disk $\Sigma$. Let $\mathcal{F}$ be the ambient skew-field of this algebra $\mathcal{A}$.

\begin{definition}
Let $T$ be an ideal triangulation of $\Sigma$. For any $j\in J$, we define an element $X_j=X_{j;T}$ of $\mathcal{F}$ by the formula 
\[
X_j = M_T\big(\sum_s\varepsilon_{js}\mathbf{e}_s\big)
\]
where the $\mathbf{e}_s$ are standard basis vectors. In addition, we will use the notation $q=\omega^4$.
\end{definition}

As the notation suggests, these elements are related to the generators of the quantum Poisson variety.

\begin{proposition}
\label{prop:Xcommutation}
The elements $X_j$~($j\in J$) satisfy the commutation relations 
\[
X_iX_j=q^{2\varepsilon_{ij}}X_jX_i.
\]
\end{proposition}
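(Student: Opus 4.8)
The plan is to compute the product $X_i X_j$ directly from the definition $X_j = M_T\big(\sum_s \varepsilon_{js}\mathbf{e}_s\big)$ using the commutation relation for monomials in a toric frame. Recall that for a toric frame $M_T$ with compatibility matrix $\Lambda_T$, one has the identity $M_T(u)M_T(v) = \omega^{\Lambda_T(u,v)} M_T(u+v)$ (this is essentially the defining relation for $M_T$, packaged in the rule $[T^{u'-u}] = \omega^{-\Lambda_T(u,u')}[T^u]^{-1}[T^{u'}]$). Consequently $M_T(u)M_T(v) = \omega^{2\Lambda_T(u,v)} M_T(v)M_T(u)$, so the whole computation reduces to evaluating $\Lambda_T\big(\sum_s \varepsilon_{is}\mathbf{e}_s,\ \sum_t \varepsilon_{jt}\mathbf{e}_t\big) = \sum_{s,t}\varepsilon_{is}\varepsilon_{jt}\lambda_{st}$.

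**Next I would** identify this double sum with $\varepsilon_{ij}$ (up to the factor that makes the exponent come out to $2\varepsilon_{ij}$). The key input is the compatibility condition from Muller's Proposition~7.8, namely $\sum_k b_{kj}\lambda_{ki} = 4\delta_{ij}$, together with the relations between $\varepsilon$, $b$, and $\lambda$. Note that for $i,j$ ranging over $I$ we have $\varepsilon_{ij} = (e_i,e_j)$, that $b_{ij}$ agrees with $\varepsilon_{ij}$ when $j\in J$, and that $\lambda_{ij}$ records the weaker ``clockwise'' relation. Using skew-symmetry and rewriting $\sum_{s,t}\varepsilon_{is}\varepsilon_{jt}\lambda_{st}$ by first summing over one index via the compatibility identity, one should obtain $\sum_{s,t}\varepsilon_{is}\varepsilon_{jt}\lambda_{st} = 4\varepsilon_{ij}$ (or its negative, depending on sign conventions), so that $2\Lambda_T(\dots) = \tfrac12\cdot 4\varepsilon_{ij}\cdot(\text{factor from }q=\omega^4)$. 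Here one must be careful: since $q=\omega^4$, the relation $X_iX_j = q^{2\varepsilon_{ij}}X_jX_i$ translates to $X_iX_j = \omega^{8\varepsilon_{ij}}X_jX_i$, so one needs the exponent $2\Lambda_T\big(\sum_s\varepsilon_{is}\mathbf{e}_s, \sum_t\varepsilon_{jt}\mathbf{e}_t\big)$ to equal $8\varepsilon_{ij}$, i.e. $\sum_{s,t}\varepsilon_{is}\varepsilon_{jt}\lambda_{st} = 4\varepsilon_{ij}$.

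**The main obstacle** I anticipate is the bookkeeping: correctly handling the distinction between the index sets $I$ and $J$ (the sums defining $X_j$ run over all of $I$, while the compatibility identity involves a sum over $J$ in one slot and all of $I$ in the other), and pinning down the signs and clockwise/counterclockwise conventions so that the factor of $4$ appears with the right sign. One clean way to organize this is to observe that $\sum_s \varepsilon_{js}\mathbf{e}_s$ is (up to sign) the $j$-th column of $\mathbf{B}_T$ viewed inside $\mathbb{Z}^I$, apply $\Lambda_T$ to two such columns, and use $\mathbf{B}_T^{\mathsf T}\Lambda_T = 4\,\mathrm{Id}$ directly in matrix form; since $\varepsilon$ restricted to $I\times J$ is exactly $\mathbf B_T$, this gives $\mathbf B_T^{\mathsf T}\Lambda_T\mathbf B_T = 4\mathbf B_T$, whose $(i,j)$ entry is $4\varepsilon_{ij}$ for $i,j\in J$. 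The remaining step is to note that a monomial computation shows the sign works out so the $q$-commutation relation holds exactly as stated, and to remark that the case of frozen indices does not arise since $i,j\in J$ in the statement.
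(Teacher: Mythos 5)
This is essentially the paper's proof: reduce the claim to evaluating $\Lambda_T\big(\sum_s\varepsilon_{is}\mathbf{e}_s,\sum_t\varepsilon_{jt}\mathbf{e}_t\big)=\sum_{s,t}\varepsilon_{is}\varepsilon_{jt}\lambda_{st}$ and compute this via the compatibility condition $\sum_k b_{kj}\lambda_{ki}=4\delta_{ij}$, then use $q=\omega^4$. To settle the signs you left open: with the paper's conventions one has $M(v_1)M(v_2)=\omega^{-2\Lambda_M(v_1,v_2)}M(v_2)M(v_1)$ and $b_{ij}=-\varepsilon_{ij}=\varepsilon_{ji}$ (not $b_{ij}=\varepsilon_{ij}$), so $\sum_s\varepsilon_{js}\mathbf{e}_s$ is exactly the $j$th column of $\mathbf{B}_T$, the double sum equals $(\mathbf{B}_T^{\mathsf T}\Lambda_T\mathbf{B}_T)_{ij}=4b_{ij}=-4\varepsilon_{ij}$, and the resulting exponent is $\omega^{(-2)(-4\varepsilon_{ij})}=\omega^{8\varepsilon_{ij}}=q^{2\varepsilon_{ij}}$ as required.
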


\begin{proof}
By the properties of toric frames, we have 
\[
X_iX_j=\omega^{-2\Lambda_T(\sum_s\varepsilon_{is}\mathbf{e}_s,\sum_t\varepsilon_{jt}\mathbf{e}_t)} X_jX_i.
\]
Now the compatibility condition proved in Proposition~\ref{prop:compatibility} implies 
\begin{align*}
\Lambda_T\big(\sum_s\varepsilon_{is}\mathbf{e}_s,\sum_t\varepsilon_{jt}\mathbf{e}_t\big) &= \sum_{s,t}\varepsilon_{is}\varepsilon_{jt}\Lambda_T(\mathbf{e}_s,\mathbf{e}_t) \\
&= \sum_{s,t}\varepsilon_{is}\varepsilon_{jt}\lambda_{st} \\
&= -\sum_s\varepsilon_{is}\sum_tb_{tj}\lambda_{ts} \\
&= -4\varepsilon_{ij}
\end{align*}
where we have used the fact that $\varepsilon_{jt}=-b_{tj}$. Therefore $X_iX_j=\omega^{8\varepsilon_{ij}}X_jX_i=q^{2\varepsilon_{ij}}X_jX_i$.
\end{proof}

In the following result, $T'$ denotes the triangulation obtained from $T$ by a flip of the edge~$k$.

\begin{proposition}
\label{prop:Xtrans}
For each $i$, let $X_i'=X_{i;T'}$. Then 
\[
X_i'=
\begin{cases}
X_i\prod_{p=0}^{|\varepsilon_{ik}|-1}(1+q^{2p+1}X_k) & \mbox{if } \varepsilon_{ik}\leq0 \mbox{ and } i\neq k \\
X_iX_k^{\varepsilon_{ik}}\prod_{p=0}^{\varepsilon_{ik}-1}(X_k+q^{2p+1})^{-1} & \mbox{if } \varepsilon_{ik}\geq0 \mbox{ and } i\neq k \\
X_k^{-1} & \mbox{if } i=k.
\end{cases}
\]
\end{proposition}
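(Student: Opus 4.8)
The plan is to verify the mutation formula for the $X_i$ by expressing everything inside the skein-algebra skew-field $\mathcal{F}$ and reducing to the known transformation law for quantum $Y$-variables (equivalently $\hat{X}$-variables) in a quantum cluster algebra. Concretely, recall that $X_j = M_T(\sum_s \varepsilon_{js}\mathbf{e}_s)$, so the elements $X_j$ are precisely the ``$Y$-variables'' attached to the quantum seed $(\mathbf{B}_T, M_T)$ via the compatibility matrix $\Lambda_T$. The passage from $T$ to $T'$ is, by Proposition~\ref{prop:quantumflip}, a mutation of quantum seeds in direction $k$, under which the toric frames $M_T$ and $M_{T'}$ are related by the standard quantum cluster mutation rule (this is the content of the review in Appendix~\ref{sec:ReviewOfQuantumClusterAlgebras}). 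So the task is to translate that rule, written in terms of cluster variables $A_i = M_T(\mathbf{e}_i)$, into the stated rule for the monomials $X_i$.

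First I would write $X_i' = M_{T'}(\sum_s \varepsilon_{is}'\mathbf{e}_s)$ and use Proposition~\ref{prop:matrixmutation} (equivalently the flip formula for $\varepsilon_{ij}'$) to substitute the mutated matrix entries. For $i = k$ this immediately gives $\varepsilon_{kj}' = -\varepsilon_{kj}$, and since the mutated frame satisfies $M_{T'}(-\mathbf{e}_k) = M_{T'}(\mathbf{e}_k)^{-1}$, tracking the sign change of the row yields $X_k' = X_k^{-1}$ after checking that the $\omega$-powers coming from $\Lambda_{T'}$ cancel; this is the easy case. For $i \neq k$, write $\varepsilon_{is}' = \varepsilon_{is} + \tfrac12(|\varepsilon_{ik}|\varepsilon_{ks} + \varepsilon_{ik}|\varepsilon_{ks}|)$ and split $\sum_s \varepsilon_{is}'\mathbf{e}_s = \sum_s \varepsilon_{is}\mathbf{e}_s + (\text{correction involving } \varepsilon_{ik} \text{ and the } k\text{-th row})$. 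Expressing $M_{T'}$ of this in terms of $M_T$ via the quantum mutation rule for the frame, the ``$\sum_s \varepsilon_{is}\mathbf{e}_s$'' part reproduces $X_i$ and the correction term produces a product of binomials in $X_k$; the number and form of the factors is governed by $|\varepsilon_{ik}|$ and the sign of $\varepsilon_{ik}$, giving the two cases $\prod_{p=0}^{|\varepsilon_{ik}|-1}(1+q^{2p+1}X_k)$ and $X_k^{\varepsilon_{ik}}\prod_{p=0}^{\varepsilon_{ik}-1}(X_k+q^{2p+1})^{-1}$. Throughout, the scalar $\omega$-powers are forced by the compatibility identity $\sum_k b_{kj}\lambda_{ki} = 4\delta_{ij}$ already exploited in the proof of Proposition~\ref{prop:Xcommutation}, together with $q = \omega^4$.

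Alternatively — and this is probably cleaner to write — I would invoke Theorem~\ref{thm:transformation} directly: the elements $X_j \in \mathcal{F}$ satisfy exactly the commutation relations $X_iX_j = q^{2\varepsilon_{ij}}X_jX_i$ of the generators of $\mathcal{X}_{\mathbf{i}}^q$ (Proposition~\ref{prop:Xcommutation}), so there is an algebra embedding of $\mathcal{X}_{\mathbf{i}}^q$ into $\mathcal{F}$ sending the abstract generator to $X_j$, and one only needs to check that this embedding intertwines the abstract mutation map $\mu_k^q$ of Definition~\ref{def:quantummutationdef} with the map $X_i \mapsto X_i'$ induced by the flip $T \to T'$. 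Since both sides are algebra homomorphisms out of the skew-field $\widehat{\mathcal{X}}_{\mathbf{i}'}^q$, it suffices to compare them on the finitely many generators $X_i'$, and the abstract side is computed by the second displayed formula of Theorem~\ref{thm:transformation}. The agreement on generators is then a direct comparison of the $A$-variable mutation (Muller's Theorem~7.9) with the $\mathrm{Ad}_{\Psi^q(X_k)}$-conjugation formula.

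The main obstacle, in either route, is bookkeeping of the scalar $\omega$-powers produced by the toric frame: the relation $[T^{u'-u}] = \omega^{-\Lambda_T(u,u')}[T^u]^{-1}[T^{u'}]$ introduces $\Lambda_T$-dependent prefactors at every step, and one must verify these conspire with the $q^{2p+1}$ coefficients in the binomial products rather than spoiling them. Concretely, this amounts to checking identities of the shape $\Lambda_T(\mathbf{e}_i, \text{something involving the } k\text{-th column of } \mathbf{B}_T) $ against multiples of $\varepsilon_{ik}$, which is exactly where the compatibility condition $\sum_k b_{kj}\lambda_{ki} = 4\delta_{ij}$ does the work. Once that is pinned down the rest is a routine expansion, so I would state the scalar identity as a short sublemma and then assemble the three cases.
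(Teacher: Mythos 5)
Your first route is sound and is essentially a from-scratch version of what the paper does by citation: the paper's entire proof is to quote Lemma~5.4 of Tran's paper on quantum $F$-polynomials, which gives the transformation of the $Y$-variables $Y_j=M_0(\sum_i b_{ij}\mathbf{e}_i)$ under quantum seed mutation with a general compatibility scalar $d_k$, and then substitute $d_k=4$ (forced by Muller's compatibility identity $\sum_k b_{kj}\lambda_{ki}=4\delta_{ij}$, whence $\omega^{-2(-d_kp-d_k/2)}=\omega^{8p+4}=q^{2p+1}$). Your identification of the $X_j$ with these $Y$-variables is correct (note $\varepsilon_{js}=b_{sj}$), your treatment of the $i=k$ case is right, and the ``correction term'' expansion you defer to routine bookkeeping is precisely the content of the cited lemma, so carrying it out via Proposition~\ref{prop:exchangetoricframe} would reproduce Tran's argument. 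The difference is only citation versus re-derivation.

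Your ``alternative, cleaner'' second route, however, is circular as stated. Theorem~\ref{thm:transformation} computes what the \emph{abstract} mutation map $\mu_k^q$ does on the generators of the quantum torus $\widehat{\mathcal{X}}_{\mathbf{i}}^q$; the content of Proposition~\ref{prop:Xtrans} is precisely that the \emph{concrete} elements $X_{i;T}\in\mathcal{F}$ transform under a flip by that same formula. Saying ``one only needs to check that the embedding intertwines $\mu_k^q$ with the flip-induced map'' is a restatement of the proposition, not a reduction of it, and the ``direct comparison'' you then invoke is exactly the computation of your first route. Drop the second route, or keep the first and acknowledge that the binomial expansion is the nontrivial step (equivalently, cite Tran's Lemma~5.4 as the paper does).
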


\begin{proof}
According to Lemma~5.4 of~\cite{Tran}, a mutation in the direction $k$ transforms $X_i$ to the new element 
\[
X_i'=
\begin{cases}
X_i\prod_{p=0}^{|b_{ki}|-1}(1+\omega^{-2(-d_kp-\frac{d_k}{2})}X_k) & \mbox{if } b_{ki}\leq0 \mbox{ and } i\neq k \\
X_iX_k^{b_{ki}}\prod_{p=0}^{b_{ki}-1}(X_k+\omega^{-2(-d_kp-\frac{d_k}{2})})^{-1} & \mbox{if } b_{ki}\geq0 \mbox{ and } i\neq k \\
X_k^{-1} & \mbox{if } i=k
\end{cases}
\]
where $d_k$ is the number appearing in Definition~\ref{def:compatibility}. The statement follows if we substitute $d_k=4$ into this formula.
\end{proof}

\subsection{Construction of the map}

We will now define the map $\mathbb{I}_{\mathcal{A}}^q$. To do this, let $l$ be a point of $\mathcal{A}_\Sigma(\mathbb{Z}^t)$, represented by a collection of curves on~$\Sigma$. By the definition of the space $\mathcal{A}_\Sigma(\mathbb{Z}^t)$, these curves can be taken to be nonintersecting, so there exists an ideal triangulation $T_l$ of $\Sigma$ such that each curve coincides with an edge of $T_l$. Let 
\[
\mathbf{w}=(w_1,\dots,w_m)
\]
be the integral vector whose $i$th component $w_i$ is the weight of the curve corresponding to the edge $i$ of $T_l$ or zero if there is no such curve. Here $m=|I_{T_l}|$ is the number of edges of the ideal triangulation~$T_l$.

\begin{definition}
We will write $\mathbb{I}_{\mathcal{A}}^q(l)$ for the element of $\mathcal{F}$ given by 
\[
\mathbb{I}_{\mathcal{A}}^q(l)=M_{T_l}(\mathbf{w}).
\]
\end{definition}

It is clear from the definition of the toric frames that this is independent of the choice of~$T_l$. Our goal is to show that this definition provides a map $\mathbb{I}_{\mathcal{A}}^q:\mathcal{A}_\Sigma(\mathbb{Z}^t)\rightarrow\mathcal{X}_\Sigma^q$. To prove this, we will need a technical lemma on the $\mathbf{g}$-vectors from Section~\ref{sec:QuantumFPolynomials}.

\begin{lemma}
\label{lem:gsum}
Fix an ideal triangulation $T$ of $\Sigma=(\mathbb{D},\mathbb{M})$, and let $c_1,\dots,c_{2k}$ be noncontractible curves connecting marked points. Assume these curves form a closed path on~$\mathbb{D}$ in which $c_i$ and $c_{i+1}$ share a common endpoint for $i=1,\dots,2k$ where we number the indices modulo~$2k$. Let $\mathbf{g}_{c_i}$ be the $\mathbf{g}$-vector associated to $c_i$. Then the alternating sum 
\[
\mathbf{s}=\sum_{i=1}^{2k}(-1)^i\mathbf{g}_{c_i}
\]
is an integral linear combination of expressions of the form $\sum_s\varepsilon_{is}\mathbf{e}_s$.
\end{lemma}

\begin{proof}
Consider the triangle in $T$ that includes $p\in I-J$ as one of its edges and label the other edges as follows:
\[
\xy /l1pc/:
{\xypolygon3"A"{~:{(2,0):}}};
(2.4,0.5)*{q};
(-0.4,0.5)*{r}; 
(1,-1.4)*{p};
\endxy
\]
We can define a vector associated to this edge $p$ as $\mathbf{y}_p=\mathbf{e}_p+\mathbf{e}_q-\mathbf{e}_r$. Fix an edge $j\in J$. For any endpoint $v$ of $j$, there is a collection of edges of $T$ that start at $v$ and lie in the counterclockwise direction from~$j$. Consider a path $j'$ that goes diagonally across $j$, intersecting each of these edges transversally and terminating on the boundary. An example is illustrated below.
\[
\xy /l2.5pc/:
{\xypolygon10"A"{~:{(2,0):}~>{.}}};
{"A9"\PATH~={**@{.}}'"A4"},
{"A9"\PATH~={**@{.}}'"A5"},
{"A9"\PATH~={**@{.}}'"A6"},
{"A9"\PATH~={**@{.}}'"A7"},
{"A10"\PATH~={**@{.}}'"A4"},
{"A1"\PATH~={**@{.}}'"A4"},
{"A2"\PATH~={**@{.}}'"A4"},
"A1"*{\sbullet}, 
"A2"*{\sbullet}, 
"A3"*{\sbullet}, 
"A4"*{\sbullet}, 
"A5"*{\sbullet}, 
"A6"*{\sbullet}, 
"A7"*{\sbullet}, 
"A8"*{\sbullet}, 
"A9"*{\sbullet}, 
"A10"*{\sbullet}, 
(1,-1.9)*{}="1";
(1.1,1.9)*{}="2";
"1";"2" **\crv{(0.25,-1) & (1.75,1)};
(1.5,-1)*{j};
(0.8,-0.25)*{j'};
(1,-2.2)*{i_0};
(1,2.2)*{i_1};
\endxy
\]
Given such a path $j'$, let $E_j$ be the set of all edges in $T$ that $j'$ crosses. Then we can form the sum 
\[
\mathbf{p}_j=\mathbf{y}_{i_0}+\sum_{i\in E_j}\mathbf{x}_i+\mathbf{y}_{i_1}
\]
where $i_0$ and $i_1$ are the edges on which $j'$ terminates and we have written $\mathbf{x}_i=\sum_s\varepsilon_{is}\mathbf{e}_s$. One can check that this expression $\mathbf{p}_j$ equals $2\mathbf{e}_j$. Now let $c$ be a curve connecting marked points. It follows from the results of~\cite{MSW1} that the $\mathbf{g}$-vector $\mathbf{g}_c$ is an alternating sum of standard basis vectors corresponding to a path in~$T$. (A slightly technical point is that we work with \emph{extended} $\mathbf{g}$-vectors whereas \cite{MSW1} works with ordinary $\mathbf{g}$-vectors. To apply the results of~\cite{MSW1}, one should replace the surface $\Sigma$ by a larger surface $\Sigma'$ with twice as many marked points by gluing a triangle to each boundary segment of~$\Sigma$.) In fact, the entire sum appearing in the statement of the lemma can be written as an alternating sum $\mathbf{s}=\sum_{l=1}^{2N}(-1)^l\mathbf{e}_{j_l}$ of standard basis vectors associated to the edges $j_1,\dots,j_{2N}$ of a closed path in~$T$. We assume that these edges are ordered consecutively. To each edge $j$ of this path, we associate the arc $j'$ described above. We have 
\[
2\mathbf{s}=2\sum_{l=1}^{2N}(-1)^l\mathbf{e}_{j_l}=\sum_{l=1}^{2N}(-1)^l\mathbf{p}_{j_l}
\]
and all of the $\mathbf{y}$-terms cancel on the right hand side. Thus this expression is linear combination of the $\mathbf{x}_k$. By construction, these $\mathbf{x}_k$ are associated to the points of intersection between edges of~$T$ and a certain closed path on~$\Sigma$. Each edge intersects this path in an even number of points, so the vector $2\mathbf{s}$ is in fact a linear combination of the~$\mathbf{x}_k$ with \emph{even} coefficients. Dividing by~2 yields the desired result.
\end{proof}

\begin{theorem}
\label{thm:Xcanonicalregular}
Let $T$ be an ideal triangulation of $\Sigma$, and let $X_j=X_{j;T}$ be defined as above. Then for any $l\in\mathcal{A}_\Sigma(\mathbb{Z}^t)$, the element $\mathbb{I}_{\mathcal{A}}^q(l)$ is a Laurent polynomial in the~$X_j$ with coefficients in~$\mathbb{Z}_{\geq0}[q,q^{-1}]$.
\end{theorem}

\begin{proof}
By applying the relations from Definition~\ref{def:skein}, we can write 
\[
\mathbb{I}_{\mathcal{A}}^q(l) = \omega^\alpha\prod_{i=1}^mM_{T_l}(\mathbf{e}_i)^{w_i}
\]
where $\alpha=\sum_{i<j}\Lambda_{T_l}(\mathbf{e}_i,\mathbf{e}_j)w_iw_j$. Then Corollary~\ref{cor:Fpositivity} implies 
\[
\mathbb{I}_{\mathcal{A}}^q(l) = \omega^\alpha\prod_{i=1}^m(F_i\cdot M_T(\mathbf{g}_i))^{w_i}.
\]
Each $F_i$ in this last line denotes a polynomial in the expressions $M_T(\sum_j\varepsilon_{kj}\mathbf{e}_j)$ with coefficients in $\mathbb{Z}_{\geq0}[\omega^4,\omega^{-4}]$. The $\mathbf{g}_i$ are integral vectors. We have 
\begin{align*}
\Lambda_T\big(\sum_i\varepsilon_{ki}\mathbf{e}_i,\mathbf{e}_j\big) = \sum_i\varepsilon_{ki}\lambda_{ij} = \sum_i b_{ik}\lambda_{ij} = 4\delta_{kj}
\end{align*}
so that 
\[
M_T\big(\sum_i\varepsilon_{ki}\mathbf{e}_i\big)M_T(\mathbf{e}_j) = \omega^{-8\delta_{kj}} M_T(\mathbf{e}_j)M_T\big(\sum_i\varepsilon_{ki}\mathbf{e}_i\big).
\]
It follows that we can rearrange the factors in the last expression for $\mathbb{I}_{\mathcal{A}}^q(l)$ to get 
\[
\mathbb{I}_{\mathcal{A}}^q(l) = \omega^\alpha Q\cdot\prod_{i=1}^m M_T(\mathbf{g}_i)^{w_i}
\]
where $Q$ is a polynomial in the expressions $M_T(\sum_j\varepsilon_{kj}\mathbf{e}_j)$ with coefficients in the semiring $\mathbb{Z}_{\geq0}[\omega^4,\omega^{-4}]$. By Lemma~6.2 in~\cite{Tran}, we know that $\Lambda_{T_l}(\mathbf{e}_i,\mathbf{e}_j)=\Lambda_T(\mathbf{g}_i,\mathbf{g}_j)$, and so 
\begin{align*}
M_T\big(\sum_iw_i\mathbf{g}_i\big) &= \omega^{\sum_{i<j}\Lambda_T(\mathbf{g}_i,\mathbf{g}_j)w_iw_j} \prod_{i=1}^m M_T(\mathbf{g}_i)^{w_i} \\
&= \omega^\alpha \prod_{i=1}^m M_T(\mathbf{g}_i)^{w_i}.
\end{align*}
Hence 
\[
\mathbb{I}_{\mathcal{A}}^q(l) = Q\cdot M_T\big(\sum_iw_i\mathbf{g}_i\big).
\]
The vector $\sum_iw_i\mathbf{g}_i$ is a sum of vectors of the type appearing in the statement of Lemma~\ref{lem:gsum}, so this lemma implies that the second factor is a monomial in the $X_j$ with coefficient in $\mathbb{Z}_{\geq0}[\omega^4,\omega^{-4}]$. This completes the proof.
\end{proof}

By the properties established in Propositions~\ref{prop:Xcommutation}, and~\ref{prop:Xtrans}, we can regard the Laurent polynomial of Theorem~\ref{thm:Xcanonicalregular} as an element of the algebra $\mathcal{X}_\Sigma^q$. Thus we have constructed a canonical map $\mathbb{I}_{\mathcal{A}}^q:\mathcal{A}_\Sigma(\mathbb{Z}^t)\rightarrow\mathcal{X}_\Sigma^q$ as desired.

\subsection{Properties}

Fix an ideal triangulation $T$ of~$\Sigma$, and let $X_j$ be the elements of~$\mathcal{F}$ defined above. We will now prove several properties of the map $\mathbb{I}_{\mathcal{A}}^q$ conjectured in~\cite{IHES,ensembles}.

\begin{theorem}
\label{thm:Xproperties}
The map $\mathbb{I}_{\mathcal{A}}^q:\mathcal{A}_\Sigma(\mathbb{Z}^t)\rightarrow\mathcal{X}_\Sigma^q$ satisfies the following properties:
\begin{enumerate}
\item Each $\mathbb{I}_{\mathcal{A}}^q(l)$ is a Laurent polynomial in the variables $X_j$ with coefficients in $\mathbb{Z}_{\geq0}[q,q^{-1}]$.

\item The expression $\mathbb{I}_{\mathcal{A}}^q(l)$ agrees with the Laurent polynomial $\mathbb{I}_{\mathcal{A}}(l)$ when~$q=1$.

\item Let $*$ be the canonical involutive antiautomorphism of $\mathcal{X}^q$ that fixes each $X_i$ and sends~$q$ to~$q^{-1}$. Then $*\mathbb{I}_{\mathcal{A}}^q(l)=\mathbb{I}_{\mathcal{A}}^q(l)$.

\item The highest term of $\mathbb{I}_{\mathcal{A}}^q(l)$ is
\[
q^{-\sum_{i<j}\varepsilon_{ij}a_ia_j}X_1^{a_1}\dots X_n^{a_n}
\]
where $X_1^{a_1}\dots X_n^{a_n}$ is the highest term of the classical expression $\mathbb{I}_{\mathcal{A}}(l)$.

\item For any $l$,~$l'\in\mathcal{A}_\Sigma(\mathbb{Z}^t)$, we have
\[
\mathbb{I}_{\mathcal{A}}^q(l)\mathbb{I}_{\mathcal{A}}^q(l') = \sum_{l''\in\mathcal{A}_\Sigma(\mathbb{Z}^t)}c^q(l,l';l'')\mathbb{I}_{\mathcal{A}}^q(l'')
\]
where $c^q(l,l';l'')\in\mathbb{Z}[q,q^{-1}]$ and only finitely many terms are nonzero.
\end{enumerate}
\end{theorem}

\begin{proof}
1. This was proved in Theorem~\ref{thm:Xcanonicalregular}.

2. This follows immediately from the definition of $\mathbb{I}_{\mathcal{A}}$ given in~\cite{dual}.

3. If $K$ is any framed link in~$\Sigma$, let $K^\dagger$ be the framed link with the same underlying multicurve and the order of each crossing reversed. By Proposition~3.11 of~\cite{Muller} the map sending $[K]$ to $[K^\dagger]$ and $\omega$ to $\omega^\dagger\coloneqq\omega^{-1}$ extends to an involutive antiautomorphism of the skein algebra $\mathrm{Sk}_\omega(\Sigma)$. It extends further to an antiautomorphism of the fraction field $\mathcal{F}$ by the rule $(xy^{-1})^\dagger=(y^\dagger)^{-1}x^\dagger$. This operation is compatible with $*$ in the sense that 
\[
X_j^\dagger=X_j=*X_j
\]
and 
\[
q^\dagger=q^{-1}=*q.
\]
It is easy to check that $\dagger$ preserves $M_{T_l}(\mathbf{w})=\omega^{\sum_{i<j}\Lambda_{T_l}(\mathbf{e}_i,\mathbf{e}_j)}M_{T_l}(\mathbf{e}_1)^{w_1}\dots M_{T_l}(\mathbf{e}_n)^{w_m}$. It follows that $\mathbb{I}_{\mathcal{A}}^q(l)$ is $*$-invariant.

4. By part~1, we can write
\[
\mathbb{I}_{\mathcal{A}}^q(l) = \sum_{(i_1,\dots,i_n)\in\supp(l)}c_{i_1,\dots,i_n}X_1^{i_1}\dots X_n^{i_n}
\]
for some finite subset $\supp(l)\subseteq\mathbb{Z}^n$ where the coefficient $c_{i_1,\dots,i_n}\in\mathbb{Z}_{\geq0}[q,q^{-1}]$ is nonzero for all $(i_1,\dots,i_n)\in\supp(l)$. Consider the coefficient $c=c_{a_1,\dots,a_n}$. We can expand this coefficient as $c=\sum_sc_sq^s$. By setting $q=1$, we see that $\sum_sc_s=1$. It follows that we must have $c_s=1$ for some $s$, and all other terms must vanish. Thus $c=q^s$, and the leading term of $\mathbb{I}_{\mathcal{A}}^q(l)$ has the form
\[
q^sX_1^{a_1}\dots X_n^{a_n}
\]
for some integer $s$. By part~3, we know that the expression $\mathbb{I}_{\mathcal{A}}^q(l)$, and hence its leading term, is $*$-invariant. This means 
\[
q^{-s} X_n^{a_n}\dots X_1^{a_1}=q^s X_1^{a_1}\dots X_n^{a_n}.
\]
We have 
\begin{align*}
X_n^{a_n}\dots X_1^{a_1} &= q^{-2\sum_{j=2}^n\varepsilon_{1j}a_1a_j} X_1^{a_1}(X_n^{a_n}\dots X_2^{a_2}) \\
&= q^{-2\sum_{j=2}^n\varepsilon_{1j}a_1a_j} q^{-2\sum_{j=3}^n\varepsilon_{2j}a_2a_j} X_1^{a_1}X_2^{a_2} (X_n^{a_n}\dots X_3^{a_3}) \\
&= \dots \\
&= q^{-2\sum_{i<j}\varepsilon_{ij}a_ia_j} X_1^{a_1}\dots X_n^{a_n},
\end{align*}
so $*$-invariance of the leading term is equivalent to 
\[
q^{-s-2\sum_{i<j}\varepsilon_{ij}a_ia_j} X_1^{a_1}\dots X_n^{a_n}=q^s X_1^{a_1}\dots X_n^{a_n}.
\]
Equating coefficients, we see that $s=-\sum_{i<j}\varepsilon_{ij}a_ia_j$. Thus we see that the highest term equals $q^{-\sum_{i<j}\varepsilon_{ij}a_ia_j}X_1^{a_1}\dots X_n^{a_n}$.

5. Let $l$,~$l'\in\mathcal{A}_\Sigma(\mathbb{Z}^t)$. Then there exist ideal triangulations $T_l$ and $T_{l'}$ of~$\Sigma$ and integral vectors $\mathbf{w}$ and $\mathbf{w}'$ such that $\mathbb{I}_{\mathcal{A}}^q(l)=M_{T_l}(\mathbf{w})$ and $\mathbb{I}_{\mathcal{A}}^q(l')=M_{T_{l'}}(\mathbf{w}')$. For any integral vector $\mathbf{v}=(v_1,\dots,v_m)$, we can consider the vector $\mathbf{v}_+$ whose $i$th component equals $v_i$ if $v_i\geq0$ and zero otherwise. We can likewise consider the vector $\mathbf{v}_-$ whose $i$th component equals $v_i$ if $v_i\leq0$ and zero otherwise. Then there exists an integer $N$ such that 
\begin{align*}
\mathbb{I}_{\mathcal{A}}^q(l)\mathbb{I}_{\mathcal{A}}^q(l') &= M_{T_l}(\mathbf{w}) M_{T_{l'}}(\mathbf{w}') \\
&= \omega^N M_{T_l}(\mathbf{w}_-)M_{T_l}(\mathbf{w}_+)M_{T_{l'}}(\mathbf{w}_+')M_{T_{l'}}(\mathbf{w}_-') \\
&= \omega^N M_{T_l}(\mathbf{w}_-)[T_l^{\mathbf{w}_+}][T_{l'}^{\mathbf{w}_+'}]M_{T_{l'}}(\mathbf{w}_-').
\end{align*}
By applying the skein relations, we can write the product $[T_l^{\mathbf{w}_+}][T_{l'}^{\mathbf{w}_+'}]$ as 
\[
[T_l^{\mathbf{w}_+}][T_{l'}^{\mathbf{w}_+'}]=\sum_{i=1}^kd_i^\omega[K_i]
\]
where the $K_i$ are distinct simple multicurves on~$\Sigma$ and $d_i^\omega\in\mathbb{Z}[\omega,\omega^{-1}]$ are nonzero. Therefore 
\[
\mathbb{I}_{\mathcal{A}}^q(l)\mathbb{I}_{\mathcal{A}}^q(l') = \sum_{i=1}^k\omega^Nd_i^\omega M_{T_i}(\mathbf{w}_-)M_{T_i}(\mathbf{v}_i)M_{T_i}(\mathbf{w}_-')
\]
where $T_i$ is an ideal triangulation of $\Sigma$ such that each each curve of $K_i$ coincides with an edge of $T_i$. Here we have written $\mathbf{v}_i$ for the unique integral vector such that $[K_i]=M_{T_i}(\mathbf{v}_i)$. Each of the products $\omega^Nd_i^\omega M_{T_i}(\mathbf{w}_-)M_{T_i}(\mathbf{v}_i)M_{T_i}(\mathbf{w}_-')$ in this last expression can be written as $c^\omega(l,l';l_i)\mathbb{I}_\mathcal{A}^q(l_i)$ for some $l_i\in\mathcal{A}_\Sigma(\mathbb{Z}^t)$ and some $c^\omega(l,l';l_i)\in\mathbb{Z}[\omega,\omega^{-1}]$. Hence 
\[
\mathbb{I}_{\mathcal{A}}^q(l)\mathbb{I}_{\mathcal{A}}^q(l') = \sum_{i=1}^kc^\omega(l,l';l_i)\mathbb{I}_\mathcal{A}^q(l_i).
\]
The left hand side of this last equation is a Laurent polynomial with coefficients in~$\mathbb{Z}[q,q^{-1}]$. Let us write $[\mathbb{I}_{\mathcal{A}}^q(l_i)]^H$ for the highest term of $\mathbb{I}_{\mathcal{A}}^q(l_i)$. We can impose a lexicographic total ordering $\geq$ on the set of all commutative Laurent monomials in $X_1,\dots,X_n$ so that $[\mathbb{I}_{\mathcal{A}}^1(l_i)]^H$ is indeed the highest term of $\mathbb{I}_{\mathcal{A}}^1(l_i)$ with respect to this total ordering. These highest terms are distinct, so we may assume 
\[
[\mathbb{I}_{\mathcal{A}}^1(l_1)]^H > [\mathbb{I}_{\mathcal{A}}^1(l_2)]^H > \dots > [\mathbb{I}_{\mathcal{A}}^1(l_k)]^H.
\]
Consider the expression $c^\omega(l,l';l_1)[\mathbb{I}_{\mathcal{A}}^q(l_1)]^H$. It cannot cancel with any other term in the sum, so we must have $c^\omega(l,l';l_1)\in\mathbb{Z}[q,q^{-1}]$. It follows that the sum 
\[
\sum_{i=2}^kc^\omega(l,l';l_i)\mathbb{I}_\mathcal{A}^q(l_i)
\]
is a Laurent polynomial with coefficients in~$\mathbb{Z}[q,q^{-1}]$. Arguing as before, we see that $c^\omega(l,l';l_2)\in\mathbb{Z}[q,q^{-1}]$. Continuing in this way, we see that all $c^\omega(l,l';l_i)$ lie in $\mathbb{Z}[q,q^{-1}]$. This completes the proof.
\end{proof}

In \cite{IHES,ensembles}, Fock and Goncharov conjectured in addition that the Laurent polynomials $c^q(l,l',l'')$ have \emph{positive} coefficients. This property should be closely related to Conjecture~4.20 of~\cite{Thurston}. However, it does not obviously follow from the skein relations as these may involve negative coefficients.

\section{Duality map for the quantum symplectic double}
\label{sec:DualityMapForTheQuantumSymplecticDouble}

\subsection{Realization of the symplectic double}

We have associated a quantum cluster algebra $\mathcal{A}$ to the disk $\Sigma$. One can likewise associate a quantum cluster algebra $\mathcal{A}^\circ$ to the disk $\Sigma^\circ$ obtained from~$\Sigma$ by reversing its orientation. If $T$ is any ideal triangulation of $\Sigma$, then there is a corresponding triangulation $T^\circ$ of~$\Sigma^\circ$. We will write $\Lambda_{T^\circ}$ and $M_{T^\circ}$, respectively, for the compatibility matrix and toric frame corresponding to the triangulation~$T^\circ$. We will write $\mathbf{e}_i$ for the standard basis vector in $\mathbb{Z}^m$ associated to the edge $i$ of $T$ or the corresponding edge of $T^\circ$. Note that we have 
\[
\Lambda_{T^\circ}(\mathbf{e}_i,\mathbf{e}_j)=-\Lambda_T(\mathbf{e}_i,\mathbf{e}_j)
\]
for all $i$ and~$j$. Let $\mathcal{F}$ be the ambient skew-field of the quantum cluster algebra $\mathcal{A}$, and let $\mathcal{F}^\circ$ be the skew-field of the quantum cluster algebra $\mathcal{A}^\circ$. Each of these skew-fields is a two-sided module over the ring $\mathbb{Z}[\omega,\omega^{-1}]$, and we will consider the following elements of the tensor product $\mathcal{F}\otimes_{\mathbb{Z}[\omega,\omega^{-1}]}\mathcal{F}^\circ$.

\begin{definition}
\label{def:tensorgenerators}
Let $T$ be an ideal triangulation of $\Sigma$. For any $i\in I$ and any $j\in J$, we define elements $B_i=B_{i;T}$ and $X_j=X_{j;T}$ of $\mathcal{F}\otimes_{\mathbb{Z}[\omega,\omega^{-1}]}\mathcal{F}^\circ$ by the formulas 
\begin{align*}
X_j &= M_T\big(\sum_s\varepsilon_{js}\mathbf{e}_s\big)\otimes1, \\
B_i &= M_T(-\mathbf{e}_i)\otimes M_{T^\circ}(\mathbf{e}_i).
\end{align*}
In addition, we will use the notation $q=\omega^4$.
\end{definition}

Let us denote by $\mathcal{G}$ the subalgebra of $\mathcal{F}\otimes_{\mathbb{Z}[\omega,\omega^{-1}]}\mathcal{F}^\circ$ consisting of Laurent polynomials in the variables $B_i$ for $i\in I$ whose coefficients are rational expressions in the $X_j$ with coefficients in $\mathbb{Z}[\omega,\omega^{-1}]$. It is easy to check that the $B_i$ for $i\in I-J$ are central elements of this algebra, and in what follows it will be important to consider the quotient $\mathcal{F}_{\mathcal{D}}=\mathcal{G}/\mathcal{I}$ where $\mathcal{I}$ is the two-sided ideal of $\mathcal{G}$ generated by $B_i-1$ for $i\in I-J$.

The following result relates the elements $X_j$ and $B_j$ to the generators appearing in the definition of the quantum symplectic double.

\begin{proposition}
\label{prop:commutation}
The elements $B_j$ and $X_j$~($j\in J$) satisfy the following commutation relations:
\[
X_iX_j=q^{2\varepsilon_{ij}}X_jX_i, \quad B_iB_j=B_jB_i, \quad X_iB_j=q^{2\delta_{ij}}B_jX_i.
\]
\end{proposition}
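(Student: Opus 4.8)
The plan is to verify the three commutation relations directly from the definitions of $B_j$ and $X_j$ as elements of $\mathcal{F}_{\mathcal{D}}=(\mathcal{F}\otimes_{\mathbb{Z}[\omega,\omega^{-1}]}\mathcal{F}^\circ)/\mathcal{I}$, using the properties of the toric frames $M_T$ on $\mathcal{F}$ and $M_{T^\circ}$ on $\mathcal{F}^\circ$, together with the key sign relation $\Lambda_{T^\circ}(\mathbf{e}_i,\mathbf{e}_j)=-\Lambda_T(\mathbf{e}_i,\mathbf{e}_j)$.

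First, the relation $X_iX_j=q^{2\varepsilon_{ij}}X_jX_i$ is immediate: since $X_j=M_T(\sum_s\varepsilon_{js}\mathbf{e}_s)\otimes 1$, this is the same computation as in Proposition~\ref{prop:Xcommutation} carried out in the first tensor factor, and the identity element in the second factor contributes nothing. So this part simply cites the earlier proof.

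Second, for $B_iB_j=B_jB_i$, I write $B_j=M_T(-\mathbf{e}_j)\otimes M_{T^\circ}(\mathbf{e}_j)$ and compute the product in the tensor algebra. The first-factor contribution picks up a power $\omega^{-2\Lambda_T(-\mathbf{e}_i,-\mathbf{e}_j)}=\omega^{-2\Lambda_T(\mathbf{e}_i,\mathbf{e}_j)}$ when swapping $M_T(-\mathbf{e}_i)$ past $M_T(-\mathbf{e}_j)$, while the second-factor contribution picks up $\omega^{-2\Lambda_{T^\circ}(\mathbf{e}_i,\mathbf{e}_j)}=\omega^{2\Lambda_T(\mathbf{e}_i,\mathbf{e}_j)}$. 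These two powers of $\omega$ cancel, so the $B_j$ commute with each other. One should note this computation is insensitive to the passage to the quotient by $\mathcal{I}$, since $\mathcal{I}$ is generated by central-type relations identifying certain $B_i$ (for $i$ frozen) with $1$; but for $j\in J$ the elements $B_j$ are honest elements of the quotient and the relation descends.

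Third, for $X_iB_j=q^{2\delta_{ij}}B_jX_i$, I compute $\big(M_T(\sum_s\varepsilon_{is}\mathbf{e}_s)\otimes 1\big)\big(M_T(-\mathbf{e}_j)\otimes M_{T^\circ}(\mathbf{e}_j)\big)$ versus the reversed product. The second tensor factor commutes trivially with $1$, so the whole discrepancy comes from the first factor, where swapping $M_T(\sum_s\varepsilon_{is}\mathbf{e}_s)$ past $M_T(-\mathbf{e}_j)$ produces $\omega^{-2\Lambda_T(\sum_s\varepsilon_{is}\mathbf{e}_s,\,-\mathbf{e}_j)}=\omega^{2\sum_s\varepsilon_{is}\lambda_{sj}}$. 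As in the proof of Theorem~\ref{thm:Xcanonicalregular}, the compatibility condition $\sum_s b_{sk}\lambda_{sj}=4\delta_{kj}$ (equivalently $\sum_s\varepsilon_{is}\lambda_{sj}=\sum_s b_{si}\lambda_{sj}=4\delta_{ij}$) gives exponent $\omega^{8\delta_{ij}}=q^{2\delta_{ij}}$, which is exactly the claimed relation. The only mildly delicate point — and the part I expect to require the most care — is bookkeeping the sign conventions in $\Lambda_{T^\circ}$ versus $\Lambda_T$ and making sure the cancellation in the $B_iB_j$ relation and the surviving factor in the $X_iB_j$ relation come out with the correct sign; once the convention $\Lambda_{T^\circ}=-\Lambda_T$ is fixed, everything follows from the two-sided $\mathbb{Z}[\omega,\omega^{-1}]$-module structure and the toric frame commutation rule $M(u)M(u')=\omega^{-2\Lambda(u,u')}M(u')M(u)$.
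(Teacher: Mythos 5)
Your proposal is correct and follows essentially the same route as the paper: reduce the $X_iX_j$ relation to Proposition~\ref{prop:Xcommutation}, cancel the $\omega$-exponents from the two tensor factors using $\Lambda_{T^\circ}=-\Lambda_T$ for $B_iB_j$, and evaluate $\Lambda_T(\sum_s\varepsilon_{is}\mathbf{e}_s,-\mathbf{e}_j)=-4\delta_{ij}$ via the compatibility condition for $X_iB_j$. The remark about the computation descending to the quotient by $\mathcal{I}$ is a harmless addition not present in the paper.
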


\begin{proof}
The proof of the first relation is identical to the proof of Proposition~\ref{prop:Xcommutation}. To prove the second relation, observe that 
\begin{align*}
B_iB_j &= M_T(-\mathbf{e}_i)M_T(-\mathbf{e}_j)\otimes M_{T^\circ}(\mathbf{e}_i)M_{T^\circ}(\mathbf{e}_j) \\
&= \omega^{-2(\Lambda_T(\mathbf{e}_i,\mathbf{e}_j)+\Lambda_{T^\circ}(\mathbf{e}_i,\mathbf{e}_j))} M_T(-\mathbf{e}_j)M_T(-\mathbf{e}_i)\otimes M_{T^\circ}(\mathbf{e}_j)M_{T^\circ}(\mathbf{e}_i) \\
&= \omega^{-2(\Lambda_T(\mathbf{e}_i,\mathbf{e}_j)+\Lambda_{T^\circ}(\mathbf{e}_i,\mathbf{e}_j))} B_jB_i
\end{align*}
and 
\[
\Lambda_T(\mathbf{e}_i,\mathbf{e}_j)+\Lambda_{T^\circ}(\mathbf{e}_i,\mathbf{e}_j)=0.
\]
Therefore $B_iB_j=B_jB_i$. Finally, we have 
\begin{align*}
X_iB_j &= M_T\big(\sum_s\varepsilon_{is}\mathbf{e}_s\big)M_T(-\mathbf{e}_j)\otimes M_{T^\circ}(\mathbf{e}_j) \\
&= \omega^{-2\Lambda_T(\sum_s\varepsilon_{is}\mathbf{e}_s,-\mathbf{e}_j)} M_T(-\mathbf{e}_j)M_T\big(\sum_s\varepsilon_{is}\mathbf{e}_s\big)\otimes M_{T^\circ}(\mathbf{e}_j) \\
&= \omega^{-2\Lambda_T(\sum_s\varepsilon_{is}\mathbf{e}_s,-\mathbf{e}_j)} B_jX_i.
\end{align*}
By the compatibility condition, we have 
\begin{align*}
\Lambda_T\big(\sum_s\varepsilon_{is}\mathbf{e}_s,-\mathbf{e}_j\big) &= -\sum_s\varepsilon_{is}\Lambda_T(\mathbf{e}_s,\mathbf{e}_j) \\
&= -\sum_sb_{si}\lambda_{sj} \\
&= -4\delta_{ij}.
\end{align*}
Therefore $X_iB_j=\omega^{8\delta_{ij}}B_jX_i=q^{2\delta_{ij}}B_jX_i$.
\end{proof}

In the following result, $T'$ denotes the triangulation obtained from $T$ by a flip of the edge~$k$. We use the notation $\mathbb{B}_k^\pm$ defined in Section~\ref{sec:TheQuantumDoubleConstruction}.

\begin{proposition}
\label{prop:Btrans}
For each $i$, let $B_i'=B_{i;T'}$. Then 
\[
B_i'=
\begin{cases}
(qX_k\mathbb{B}_k^++\mathbb{B}_k^-)B_k^{-1}(1+q^{-1}X_k)^{-1} & \mbox{if } i=k \\
B_i & \mbox{if } i\neq k.
\end{cases}
\]
\end{proposition}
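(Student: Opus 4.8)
The plan is to follow the strategy used for the $X$-variables in Proposition~\ref{prop:Xtrans}: reduce the computation to the quantum exchange relation for the cluster variable attached to the flipped edge, carried out first in $\mathcal{F}$ and then in $\mathcal{F}^\circ$, and then assemble the two tensor factors that define $B_k$. The case $i\neq k$ requires no work: a flip at the internal edge $k$ leaves every other edge of $T$ unchanged as a curve, so $M_{T'}(\mathbf{e}_i)=M_T(\mathbf{e}_i)$ and the same holds on $S^\circ$; since $M_{T'}(-\mathbf{e}_i)=M_{T'}(\mathbf{e}_i)^{-1}$, this yields $B_i'=B_i$.

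For $i=k$, write $x_k=M_T(\mathbf{e}_k)\in\mathcal{F}$ and $x_k^\circ=M_{T^\circ}(\mathbf{e}_k)\in\mathcal{F}^\circ$, so that $B_k=x_k^{-1}\otimes x_k^\circ$ and $B_k'=(x_k')^{-1}\otimes(x_k^\circ)'$, where $x_k'=M_{T'}(\mathbf{e}_k)$ and $(x_k^\circ)'$ is the cluster variable obtained from $x_k^\circ$ after the corresponding flip on $S^\circ$. By Proposition~\ref{prop:quantumflip} the pair $(\mathbf{B}_{T'},M_{T'})$ is a quantum mutation of $(\mathbf{B}_T,M_T)$, so the quantum exchange relation gives
\[
x_k' = M_T(-\mathbf{e}_k+\mathbf{v}^+)+M_T(-\mathbf{e}_k+\mathbf{v}^-),
\]
with $\mathbf{v}^+=\sum_{i\,:\,(e_k,e_i)>0}(e_k,e_i)\mathbf{e}_i$ and $\mathbf{v}^-=-\sum_{i\,:\,(e_k,e_i)<0}(e_k,e_i)\mathbf{e}_i$, and the analogous relation (with the same two lattice vectors, since the underlying quadrilateral and its sides are the same curves) holds for $(x_k^\circ)'$ in the frame $M_{T^\circ}$, where one must keep in mind that $\Lambda_{T^\circ}=-\Lambda_T$. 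Next I would factor the monomial $M_T(-\mathbf{e}_k+\mathbf{v}^-)$ out of $x_k'$ on the right; since $\mathbf{v}^+-\mathbf{v}^-=\sum_s\varepsilon_{ks}\mathbf{e}_s$, the resulting ratio is a power of $\omega$ times $X_k$, and hence
\[
(x_k')^{-1} = M_T(\mathbf{e}_k-\mathbf{v}^-)\,(1+\omega^a X_k)^{-1}
\]
for an explicit integer $a$ (the Ore property of $\mathcal{F}$ makes the inverse of this binomial legitimate).

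It then remains to multiply out $B_k'=(x_k')^{-1}\otimes(x_k^\circ)'$. Because $X_k$ lies in the first tensor factor, the element $(1+\omega^aX_k)^{-1}$ commutes with $(x_k^\circ)'$ and can be moved to the far right, leaving $\big(M_T(\mathbf{e}_k-\mathbf{v}^-)\otimes(x_k^\circ)'\big)(1+\omega^aX_k)^{-1}$. Expanding $(x_k^\circ)'$ into its two monomials, collapsing each resulting element of the form $M_T(\pm\mathbf{e}_i)\otimes M_{T^\circ}(\pm\mathbf{e}_i)$ by means of the identifications $M_T(-\mathbf{e}_i)\otimes M_{T^\circ}(\mathbf{e}_i)=1$ in $\mathcal{I}$ for frozen $i$ and the definition of $B_i$ for mutable $i$, and normal-ordering with the commutation relations of Proposition~\ref{prop:commutation}, one sees that the two monomials turn into $X_k\mathbb{B}_k^+B_k^{-1}$ and $\mathbb{B}_k^-B_k^{-1}$ up to powers of $\omega$; collecting $B_k^{-1}$ to the right (legitimate since the $B_i$ commute) and restoring the binomial factor then produces an expression of the asserted shape.

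The only real difficulty is the $\omega$-bookkeeping. One must track every power of $\omega$ generated when factoring the exchange binomial, when reordering monomials inside the tensor product and past the $X_k$ and $B_i$, and when applying the relations of $\mathcal{I}$, and then check that the two exchange monomials acquire relative weights differing by exactly $q$ — this is what produces the asymmetric $q$ in front of $X_k\mathbb{B}_k^+$ — while the binomial normalizes precisely to $1+q^{-1}X_k$ (that is, $\omega^a=q^{-1}$). I would pin down these exponents either by a direct computation with the toric-frame relations or by comparison with the abstract mutation rule of Theorem~\ref{thm:transformation}; as a final sanity check, specializing $q=1$ must recover the classical formula for $\mu_k^*B_k'$ recorded in Section~\ref{sec:QuantumClusterVarieties}.
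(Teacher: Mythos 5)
Your proposal follows essentially the same route as the paper's proof: apply the quantum exchange relation of Proposition~\ref{prop:exchangetoricframe} in each tensor factor, factor the binomial $(1+q^{-1}X_k)$ out of $M_{T'}(\mathbf{e}_k)\otimes 1$, rewrite the two monomials of $1\otimes M_{(T')^\circ}(\mathbf{e}_k)$ as $qX_k\mathbb{B}_k^+$ and $\mathbb{B}_k^-$ times a common factor using the identifications in $\mathcal{I}$ and the definitions of $X_k$ and $B_i$, and cancel the common factors -- the paper carries out exactly this, with the deferred $\omega$-bookkeeping done explicitly via the compatibility condition ($\alpha-\beta=-4$, $\gamma-\delta=4$, $\delta=\beta$). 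One caveat: those exponents must be pinned down by the direct toric-frame computation you list as your first option, not by comparison with Theorem~\ref{thm:transformation}, since the point of the proposition is precisely to verify that the skein-theoretic realization obeys that abstract mutation rule.
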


\begin{proof}
By Proposition~\ref{prop:exchangetoricframe}, we have 
\begin{align*}
M_{T'}(\mathbf{e}_k) &= M_T\big(-\mathbf{e}_k+\sum_i[\varepsilon_{ki}]_+\mathbf{e}_i\big) + M_T\big(-\mathbf{e}_k+\sum_i[-\varepsilon_{ki}]_+\mathbf{e}_i\big) \\
&= M_T\big(\sum_i\varepsilon_{ki}\mathbf{e}_i-\mathbf{e}_k+\sum_i[-\varepsilon_{ki}]_+\mathbf{e}_i\big) + M_T\big(-\mathbf{e}_k+\sum_i[-\varepsilon_{ki}]_+\mathbf{e}_i\big) \\
&= \omega^\alpha M_T\big(\sum_i\varepsilon_{ki}\mathbf{e}_i\big) M_T(-\mathbf{e}_k) M_T\big(\sum_i[-\varepsilon_{ki}]_+\mathbf{e}_i\big) \\
&\qquad\qquad + \omega^\beta M_T(-\mathbf{e}_k) M_T\big(\sum_i[-\varepsilon_{ki}]_+\mathbf{e}_i\big)
\end{align*}
where we have written 
\[
\alpha = \Lambda_T\big(\sum_i\varepsilon_{ki}\mathbf{e}_i-\mathbf{e}_k, \sum_i[-\varepsilon_{ki}]_+\mathbf{e}_i\big) + \Lambda_T\big(\sum_i\varepsilon_{ki}\mathbf{e}_i,-\mathbf{e}_k\big)
\]
and 
\[
\beta = \Lambda_T\big(-\mathbf{e}_k, \sum_i[-\varepsilon_{ki}]_+\mathbf{e}_i\big).
\]
Factoring, we obtain 
\[
M_{T'}(\mathbf{e}_k) = \left(1+\omega^{\alpha-\beta}M_T\big(\sum_i\varepsilon_{ki}\mathbf{e}_i\big)\right) \omega^\beta M_T(-\mathbf{e}_k) M_T\big(\sum_i[-\varepsilon_{ki}]_+\mathbf{e}_i\big).
\]
A straightforward calculation using the compatibility condition shows $\alpha-\beta=-4$. Substituting this into the last expression, we see that 
\[
M_{T'}(\mathbf{e}_k)\otimes1 = (1+q^{-1}X_k)\omega^\beta \left(M_T(-\mathbf{e}_k)\otimes1\right) \left(M_T\big(\sum_i[-\varepsilon_{ki}]_+\mathbf{e}_i\big)\otimes1\right).
\]
On the other hand, we have 
\begin{align*}
1\otimes M_{(T')^\circ}(\mathbf{e}_k) &= 1\otimes M_{T^\circ}\big(-\mathbf{e}_k+\sum_i[\varepsilon_{ki}]_+\mathbf{e}_i\big) + 1\otimes M_{T^\circ}\big(-\mathbf{e}_k+\sum_i[-\varepsilon_{ki}]_+\mathbf{e}_i\big) \\
&= M_T\big(\sum_i\varepsilon_{ki}\mathbf{e}_i - \sum_i[\varepsilon_{ki}]_+\mathbf{e}_i + \sum_i[-\varepsilon_{ki}]_+\mathbf{e}_i\big)\otimes M_{T^\circ}\big(-\mathbf{e}_k+\sum_i[\varepsilon_{ki}]_+\mathbf{e}_i\big) \\
&\qquad+ M_T\big(-\sum_i[-\varepsilon_{ki}]_+\mathbf{e}_i + \sum_i[-\varepsilon_{ki}]_+\mathbf{e}_i\big) \otimes M_{T^\circ}\big(-\mathbf{e}_k+\sum_i[-\varepsilon_{ki}]_+\mathbf{e}_i\big).
\end{align*}
By the properties of toric frames, this equals 
\begin{align*}
&\omega^\gamma M_T\big(\sum_i\varepsilon_{ki}\mathbf{e}_i\big) M_T\big( - \sum_i[\varepsilon_{ki}]_+\mathbf{e}_i\big) M_T\big(\sum_i[-\varepsilon_{ki}]_+\mathbf{e}_i\big) \otimes M_{T^\circ}\big(\sum_i[\varepsilon_{ki}]_+\mathbf{e}_i\big)M_{T^\circ}(-\mathbf{e}_k) \\
&\qquad+\omega^\delta M_T\big(-\sum_i[-\varepsilon_{ki}]_+\mathbf{e}_i\big) M_T\big(\sum_i[-\varepsilon_{ki}]_+\mathbf{e}_i\big) \otimes M_{T^\circ}\big(\sum_i[-\varepsilon_{ki}]_+\mathbf{e}_i\big) M_{T^\circ}(-\mathbf{e}_k)
\end{align*}
for certain exponents $\gamma$ and $\delta$. Factoring, we obtain 
\begin{align*}
&\biggr(\omega^{\gamma-\delta} M_T\big(\sum_i\varepsilon_{ki}\mathbf{e}_i\big) M_T\big( - \sum_i[\varepsilon_{ki}]_+\mathbf{e}_i\big) \otimes M_{T^\circ}\big(\sum_i[\varepsilon_{ki}]_+\mathbf{e}_i\big) \\
&\qquad+ M_T\big(-\sum_i[-\varepsilon_{ki}]_+\mathbf{e}_i\big) \otimes M_{T^\circ}\big(\sum_i[-\varepsilon_{ki}]_+\mathbf{e}_i\big)\biggr) \omega^\delta M_T\big(\sum_i[-\varepsilon_{ki}]_+\mathbf{e}_i\big) \otimes M_{T^\circ}(-\mathbf{e}_k) \\
&=(\omega^{\gamma-\delta}X_k\mathbb{B}_k^++\mathbb{B}_k^-) \omega^\delta \left(1\otimes M_{T^\circ}(-\mathbf{e}_k)\right) \biggr(M_T\big(\sum_i[-\varepsilon_{ki}]_+\mathbf{e}_i\big)\otimes1\biggr).
\end{align*}
A straightforward calculation shows that $\delta=\beta$ and $\gamma-\delta=4$. Substituting these into the last expression, we see that 
\[
1\otimes M_{(T')^\circ}(\mathbf{e}_k) = (qX_k\mathbb{B}_k^++\mathbb{B}_k^-) \omega^\beta \left(1\otimes M_{T^\circ}(-\mathbf{e}_k)\right) \biggr(M_T\big(\sum_i[-\varepsilon_{ki}]_+\mathbf{e}_i\big)\otimes1\biggr).
\]
Finally, by the definition of $B_k'$, we have 
\begin{align*}
B_k' &= M_{T'}(-\mathbf{e}_k)\otimes M_{(T')^\circ}(\mathbf{e}_k) =(1\otimes M_{(T')^\circ}(\mathbf{e}_k)) (M_{T'}(\mathbf{e}_k)\otimes1)^{-1} \\
&= (qX_k\mathbb{B}_k^++\mathbb{B}_k^-) \omega^\beta \left(1\otimes M_{T^\circ}(-\mathbf{e}_k)\right) \biggr(M_T\big(\sum_i[-\varepsilon_{ki}]_+\mathbf{e}_i\big)\otimes1\biggr) \\
&\qquad\cdot \biggr(M_T\big(\sum_i[-\varepsilon_{ki}]_+\mathbf{e}_i\big)\otimes1\biggr)^{-1} \left(M_T(-\mathbf{e}_k)\otimes1\right)^{-1} \omega^{-\beta} (1+q^{-1}X_k)^{-1} \\
&= (qX_k\mathbb{B}_k^++\mathbb{B}_k^-)B_k^{-1}(1+q^{-1}X_k)^{-1}.
\end{align*}
For $i\neq k$, we clearly have $B_i'=B_i$. This completes the proof.
\end{proof}

By Proposition~\ref{prop:Xtrans}, the transformation of the $X_j$ under a flip of the triangulation is given by the same rule derived in Appendix~\ref{app:DerivationOfMutationFormulas}.

\subsection{Construction of the map}

We are now ready to define the map $\mathbb{I}_{\mathcal{D}}^q$. To do this, let $l$ be a point of $\mathcal{D}_\Sigma(\mathbb{Z}^t)$, represented by a collection of curves on $\Sigma$ and~$\Sigma^\circ$. Write $\mathcal{C}$ for the collection of curves on~$\Sigma$ and $\mathcal{C}^\circ$ for the collection of curves on~$\Sigma^\circ$. There is an ideal triangulation $T_{\mathcal{C}}$ of $\Sigma$ such that each curve in $\mathcal{C}$ coincides with an edge of~$T_{\mathcal{C}}$, and there is an ideal triangulation $T_{\mathcal{C}^\circ}$ of~$\Sigma^\circ$ such that each curve in $\mathcal{C}^\circ$ coincides with an edge of $T_{\mathcal{C}^\circ}$. Fix an ideal triangulation $T$ of~$\Sigma$. Then each $c\in\mathcal{C}$ determines an integral $\mathbf{g}$-vector~$\mathbf{g}_c=\mathbf{g}_{c,T_{\mathcal{C}}}^T$. The triangulation $T$ determines a corresponding ideal triangulation of~$\Sigma^\circ$, and so we can likewise associate to each $c^\circ\in\mathcal{C}^\circ$ an integral vector $\mathbf{g}_{c^\circ}=\mathbf{g}_{c^\circ,T_{\mathcal{C}^\circ}}^{T}$.

\begin{lemma}
The number 
\[
N_l\coloneqq\Lambda_T(\mathbf{g}_{\mathcal{C}},\mathbf{g}_{\mathcal{C}^\circ}),
\]
where 
\[
\mathbf{g}_{\mathcal{C}}=\sum_{c\in\mathcal{C}}\mathbf{g}_c, \quad \mathbf{g}_{\mathcal{C}^\circ}=\sum_{c^\circ\in\mathcal{C}^\circ}\mathbf{g}_{c^\circ},
\]
is independent of the triangulation~$T$.
\end{lemma}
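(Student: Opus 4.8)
The plan is to show that $N_l$ does not change under a single flip of the triangulation $T$, which suffices since any two ideal triangulations of $S$ are connected by a sequence of flips. So let $T'$ be obtained from $T$ by a flip at an internal edge $k$, and let me denote by $\mathbf{g}_c'$ the $\mathbf{g}$-vector of a curve $c$ computed with respect to $T'$ (and similarly for curves on $S^\circ$). The key input is the known transformation rule for $\mathbf{g}$-vectors under mutation in the direction $k$, together with the compatibility between the mutation of $\Lambda_T$ and the mutation of the $\mathbf{B}$-matrix. Recall from Proposition~\ref{prop:quantumflip} that $(\mathbf{B}_{T'}, M_{T'})$ is the mutation of $(\mathbf{B}_T, M_T)$ in direction $k$, so $\Lambda_{T'}$ is the mutated compatibility matrix; the standard quantum cluster algebra formulas express $\Lambda_{T'}$ in terms of $\Lambda_T$ and the $\mathbf{B}$-matrix by conjugation with an elementary matrix $E_k$, i.e. $\Lambda_{T'} = E_k^{\mathsf T}\Lambda_T E_k$ for an appropriate $E_k = E_{k,\varepsilon}$ depending on a sign choice $\varepsilon\in\{+,-\}$.

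First I would write out the $\mathbf{g}$-vector mutation rule. For a curve $c$, the relation between $\mathbf{g}_c$ and $\mathbf{g}_c'$ has the form $\mathbf{g}_c' = E_k \mathbf{g}_c + (\text{correction involving the }k\text{-th component and the }\mathbf{B}\text{-matrix})$; more precisely, with the right sign convention $\varepsilon$ chosen uniformly (this is the standard subtlety that the $\mathbf{g}$-vector transformation is \emph{piecewise} linear, but on each cone it is governed by a single matrix), one has $\mathbf{g}_{c}' = E_{k,\varepsilon}\,\mathbf{g}_c$ whenever $\varepsilon$ agrees with the sign of the $k$-th coordinate of $\mathbf{g}_c$. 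The essential point — and here I would invoke the structural fact underlying Lemma~\ref{lem:gsum} — is that although individual $\mathbf{g}$-vectors transform only piecewise-linearly, the \emph{sums} $\mathbf{g}_{\mathcal{C}}$ and $\mathbf{g}_{\mathcal{C}^\circ}$ behave well: because $\mathcal{C}$ (resp. $\mathcal{C}^\circ$) consists of edges of a common triangulation $T_{\mathcal{C}}$ (resp. $T_{\mathcal{C}^\circ}$), the collection closes up into cycles in the sense of Lemma~\ref{lem:gsum}, and one can choose a single sign $\varepsilon$ so that $\mathbf{g}_{\mathcal{C}}' = E_{k,\varepsilon}\,\mathbf{g}_{\mathcal{C}} + t\,\mathbf{b}_k$ and $\mathbf{g}_{\mathcal{C}^\circ}' = E_{k,\varepsilon'}\,\mathbf{g}_{\mathcal{C}^\circ} + t^\circ\,\mathbf{b}_k^\circ$ with the scalars $t,t^\circ$ and the column vectors $\mathbf{b}_k,\mathbf{b}_k^\circ$ controlled; crucially, $S^\circ$ has the opposite orientation, so $\mathbf{B}_{T^\circ} = -\mathbf{B}_T$ and $\Lambda_{T^\circ} = -\Lambda_T$, and the sign choice appropriate for $S^\circ$ is opposite to that for $S$, i.e. $\varepsilon' = -\varepsilon$.

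Then I would simply compute
\[
\Lambda_{T'}(\mathbf{g}_{\mathcal{C}}', \mathbf{g}_{\mathcal{C}^\circ}') = (E_{k,\varepsilon}\mathbf{g}_{\mathcal{C}} + t\mathbf{b}_k)^{\mathsf T}\, E_{k,\varepsilon}^{-\mathsf T}\Lambda_T E_{k,\varepsilon}^{-1}\, (E_{k,-\varepsilon}\mathbf{g}_{\mathcal{C}^\circ} + t^\circ\mathbf{b}_k)
\]
and expand. The terms that are purely $\Lambda_T(\mathbf{g}_{\mathcal{C}},\mathbf{g}_{\mathcal{C}^\circ})$ should survive, the cross terms involving $\mathbf{b}_k$ should vanish by the compatibility condition $\sum_k b_{kj}\lambda_{ki} = 4\delta_{ij}$ (which says precisely that $\mathbf{b}_k$ pairs correctly under $\Lambda_T$ with the relevant vectors — this is the same mechanism that makes $X_k$ and $\widehat X_k$ commute and that was used in Propositions~\ref{prop:Xcommutation} and~\ref{prop:commutation}), and the sign clash $E_{k,\varepsilon}$ versus $E_{k,-\varepsilon}$ is exactly what is needed for the conjugating matrices to cancel against each other rather than compound. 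I expect the main obstacle to be the careful bookkeeping of sign conventions: verifying that the piecewise-linear $\mathbf{g}$-vector mutation can be replaced, \emph{for the relevant sums}, by honest linear maps with \emph{opposite} sign choices on $S$ and $S^\circ$, and that the leftover $\mathbf{b}_k$-correction terms genuinely cancel rather than merely having the right shape. Once the signs are pinned down, the cancellation is forced by the compatibility identity and skew-symmetry, and one concludes $\Lambda_{T'}(\mathbf{g}_{\mathcal{C}}',\mathbf{g}_{\mathcal{C}^\circ}') = \Lambda_T(\mathbf{g}_{\mathcal{C}},\mathbf{g}_{\mathcal{C}^\circ})$, i.e. $N_l$ is flip-invariant and hence independent of $T$.
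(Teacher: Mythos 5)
Your overall strategy (reduce to a single flip, conjugate $\Lambda_T$ by $E_{k,\epsilon}$, and hope the piecewise-linear $\mathbf{g}$-vector mutation linearizes on the sums with cancelling corrections) is not the paper's argument, and as written it has a real gap at exactly the point you flag. The linearization step is not justified: distinct curves $c\in\mathcal{C}$ can have $k$-th $\mathbf{g}$-vector components of different signs, so the sum $\mathbf{g}_{\mathcal{C}}$ transforms by a single $E_{k,\varepsilon}$ only up to a correction $t\,\mathbf{b}^k$ whose coefficient depends on those individual components, and Lemma~\ref{lem:gsum} --- which says that certain alternating sums of $\mathbf{g}$-vectors lie in the span of the vectors $\sum_s\varepsilon_{is}\mathbf{e}_s$ --- says nothing about this sign coherence. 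Moreover the compatibility identity gives $\Lambda_T(\mathbf{b}^k,v)=4v_k$, which is not zero, so the cross terms involving $\mathbf{b}^k$ do not vanish for free; one would have to track them against the discrepancy $E_{k,\varepsilon}E_{k,-\varepsilon}\neq I$ and verify an exact cancellation that you assert but never carry out. Until that bookkeeping is actually done, the proof is incomplete.

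The paper argues termwise rather than by cancellation, which sidesteps all of these issues. Each $\mathbf{g}_{c,T_{\mathcal{C}}}^{T}$ is a fixed linear polynomial $P_{c,T_{\mathcal{C}}}^T$ in the standard basis vectors, and evaluating the same polynomial at the $\mathbf{g}$-vectors $\mathbf{g}_{i,T}^{T'}$ of the edges of $T$ yields $\mathbf{g}_{c,T_{\mathcal{C}}}^{T'}$; Lemma~6.2 of~\cite{Tran} then gives $\Lambda_{T'}(\mathbf{g}_{p,T}^{T'},\mathbf{g}_{q,T}^{T'})=\Lambda_T(\mathbf{e}_p,\mathbf{e}_q)$, so by bilinearity $\Lambda_{T'}(\mathbf{g}_{c}^{T'},\mathbf{g}_{c^\circ}^{T'})=\Lambda_T(\mathbf{g}_{c}^{T},\mathbf{g}_{c^\circ}^{T})$ for every individual pair $(c,c^\circ)$, and summing over $\mathcal{C}\times\mathcal{C}^\circ$ finishes the proof with no sign choices, no correction terms, and no interaction between the two factors. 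If you pursue your route you will essentially end up re-proving Tran's Lemma~6.2 by hand; it is cleaner to invoke it directly.
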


\begin{proof}
Let $T$ and $T'$ be two different triangulations. Let $\mathbf{g}_{c,T_{\mathcal{C}}}^T$ and $\mathbf{g}_{c^\circ,T_{\mathcal{C}^\circ}}^T$ be the $\mathbf{g}$-vectors associated to the arcs $c\in\mathcal{C}$ and $c^\circ\in\mathcal{C}^\circ$, respectively, defined using the triangulation $T$. Similarly, let $\mathbf{g}_{c,T_{\mathcal{C}}}^{T'}$ and $\mathbf{g}_{c^\circ,T_{\mathcal{C}^\circ}}^{T'}$ be the vectors defined using the triangulation $T'$. Number the edges of $T$ so that each edge is identified with some number $1,\dots,m$. Using the recurrence relations discussed in Section~3 of~\cite{Tran}, we can write 
\[
\mathbf{g}_{c,T_{\mathcal{C}}}^T = P_{c,T_{\mathcal{C}}}^T(\mathbf{e}_1,\dots,\mathbf{e}_m), \quad \mathbf{g}_{c^\circ,T_{\mathcal{C}^\circ}}^T = P_{c^\circ,T_{\mathcal{C}^\circ}}^T(\mathbf{e}_1,\dots,\mathbf{e}_m)
\]
for linear polynomials $P_{c,T_{\mathcal{C}}}^T$ and $P_{c^\circ,T_{\mathcal{C}^\circ}}^T$, and we also have 
\[
\mathbf{g}_{c,T_{\mathcal{C}}}^{T'} = P_{c,T_{\mathcal{C}}}^T(\mathbf{g}_{1,T}^{T'},\dots,\mathbf{g}_{m,T}^{T'}), \quad \mathbf{g}_{c^\circ,T_{\mathcal{C}^\circ}}^{T'} = P_{c^\circ,T_{\mathcal{C}^\circ}}^T(\mathbf{g}_{1,T}^{T'},\dots,\mathbf{g}_{m,T}^{T'})
\]
where $\mathbf{g}_{i,T}^{T'}$ is the $\mathbf{g}$-vector associated to the $i$th edge of $T$ and the triangulation $T'$. There exist coefficients $c_{p,q}\in\mathbb{Z}$ such that 
\[
P_{c,T_{\mathcal{C}}}^T(x_1,\dots,x_m)P_{c^\circ,T_{\mathcal{C}^\circ}}^T(x_1,\dots,x_m)=\sum_{p,q=1}^mc_{p,q}x_px_q\in\mathbb{Z}[x_1,\dots,x_m].
\]
It follows from Lemma~6.2 in~\cite{Tran} that 
\begin{align*}
\Lambda_{T'}(\mathbf{g}_{c,T_{\mathcal{C}}}^{T'},\mathbf{g}_{c^\circ,T_{\mathcal{C}^\circ}}^{T'}) &= \Lambda_{T'}\left(P_{c,T_{\mathcal{C}}}^T(\mathbf{g}_{1,T}^{T'},\dots,\mathbf{g}_{m,T}^{T'}),P_{c^\circ,T_{\mathcal{C}^\circ}}^T(\mathbf{g}_{1,T}^{T'},\dots,\mathbf{g}_{m,T}^{T'})\right) \\
&= \sum_{p,q=1}^mc_{p,q}\Lambda_{T'}(\mathbf{g}_{p,T}^{T'},\mathbf{g}_{q,T}^{T'}) \\
&= \sum_{p,q=1}^mc_{p,q}\Lambda_T(\mathbf{e}_p,\mathbf{e}_q) \\
&= \Lambda_T\left(P_{c,T_{\mathcal{C}}}^T(\mathbf{e}_1,\dots,\mathbf{e}_m),P_{c^\circ,T_{\mathcal{C}^\circ}}^T(\mathbf{e}_1,\dots,\mathbf{e}_m)\right) \\
&= \Lambda_T(\mathbf{g}_{c,T_{\mathcal{C}}}^T,\mathbf{g}_{c^\circ,T_{\mathcal{C}^\circ}}^T).
\end{align*}
Let us write $\mathbf{g}_{\mathcal{C}}^T=\sum_{c\in\mathcal{C}}\mathbf{g}_{c,T_{\mathcal{C}}}^T$ and $\mathbf{g}_{\mathcal{C}^\circ}^T=\sum_{c^\circ\in\mathcal{C}^\circ}\mathbf{g}_{c^\circ,T_{\mathcal{C}^\circ}}^T$ and also $\mathbf{g}_{\mathcal{C}}^{T'}=\sum_{c\in\mathcal{C}}\mathbf{g}_{c,T_{\mathcal{C}}}^{T'}$ and $\mathbf{g}_{\mathcal{C}^\circ}^{T'}=\sum_{c^\circ\in\mathcal{C}^\circ}\mathbf{g}_{c^\circ,T_{\mathcal{C}^\circ}}^{T'}$. Then 
\begin{align*}
\Lambda_{T'}(\mathbf{g}_{\mathcal{C}}^{T'},\mathbf{g}_{\mathcal{C}^\circ}^{T'}) &= \sum_{c\in\mathcal{C},c^\circ\in\mathcal{C}^\circ}\Lambda_{T'}(\mathbf{g}_{c,T_{\mathcal{C}}}^{T'},\mathbf{g}_{c^\circ,T_{\mathcal{C}^\circ}}^{T'}) \\
&= \sum_{c\in\mathcal{C},c^\circ\in\mathcal{C}^\circ}\Lambda_T(\mathbf{g}_{c,T_{\mathcal{C}}}^T,\mathbf{g}_{c^\circ,T_{\mathcal{C}^\circ}}^T) \\
&= \Lambda_T(\mathbf{g}_{\mathcal{C}}^T,\mathbf{g}_{\mathcal{C}^\circ}^T)
\end{align*}
as desired.
\end{proof}

We now come to the main definition of the present paper.

\begin{definition}
We will write $\mathbb{I}_{\mathcal{D}}^q(l)$ for the element of $\mathcal{F}\otimes_{\mathbb{Z}[\omega,\omega^{-1}]}\mathcal{F}^\circ$ given by 
\[
\mathbb{I}_{\mathcal{D}}^q(l)=\omega^{-N_l}\cdot[\mathcal{C}]^{-1}\otimes[\mathcal{C}^\circ].
\]
Here we are regarding $\mathcal{C}$ and $\mathcal{C}^\circ$ as simple multicurves on $\Sigma$ and $\Sigma^\circ$, respectively, and we are writing $[\mathcal{C}]$ and $[\mathcal{C}^\circ]$ for the corresponding classes in the skein algebra.
\end{definition}

Our goal is to show that the above definition provides a map $\mathbb{I}_{\mathcal{D}}^q:\mathcal{D}_\Sigma(\mathbb{Z}^t)\rightarrow\mathcal{D}_\Sigma^q$. Let 
\begin{align*}
\mathbf{w} &= (w_1,\dots,w_m), \\
\mathbf{w}^\circ &= (w_1^\circ,\dots,w_m^\circ)
\end{align*}
be integral vectors where $w_i$ is the weight of the curve corresponding to the edge $i$ of~$T_{\mathcal{C}}$ and $w_i^\circ$ is the weight of the curve corresponding to the edge $i$ of $T_{\mathcal{C}^\circ}$.

\begin{theorem}
\label{thm:Dcanonicalrational}
Let $T$ be an ideal triangulation of $\Sigma$, and let $B_i$,~$X_j$ for $i\in I$ and $j\in J$ be defined as above. Then for any $l\in\mathcal{D}_\Sigma(\mathbb{Z}^t)$, the element $\mathbb{I}_{\mathcal{D}}^q(l)$ is a Laurent polynomial in the~$B_i$ whose coefficients are rational expressions in the $X_j$ with coefficients in~$\mathbb{Z}_{\geq0}[q,q^{-1}]$.
\end{theorem}

\begin{proof}
By applying the relations from Definition~\ref{def:skein}, we can write 
\[
\mathbb{I}_{\mathcal{D}}^q(l) = \omega^{-N_l}\cdot\omega^\alpha\prod_{i=1}^mM_{T_{\mathcal{C}}}(\mathbf{e}_i)^{-w_i} \otimes\omega^{\alpha^\circ}\prod_{i=1}^mM_{T_{\mathcal{C}^\circ}}(\mathbf{e}_i)^{w_i^\circ}
\]
where
\[
\alpha=\sum_{i<j}\Lambda_{T_{\mathcal{C}}}(\mathbf{e}_i,\mathbf{e}_j)w_iw_j
\]
and 
\[
\alpha^\circ=\sum_{i<j}\Lambda_{T_{\mathcal{C}^\circ}}(\mathbf{e}_i,\mathbf{e}_j)w_i^\circ w_j^\circ.
\]
Then Corollary~\ref{cor:Fpositivity} implies 
\[
\mathbb{I}_{\mathcal{D}}^q(l) = \omega^{-N_l+\alpha+\alpha^\circ}\cdot\prod_{i=1}^m\left(F_i\cdot M_T(\mathbf{g}_i)\right)^{-w_i}\otimes \prod_{i=1}^m\left(F_i^\circ\cdot M_{T^\circ}(\mathbf{g}_i^\circ)\right)^{w_i^\circ}.
\]
Each $F_i$ in the last line denotes a polynomial in the expressions $M_T(\sum_j\varepsilon_{kj}\mathbf{e}_j)$ with coefficients in $\mathbb{Z}_{\geq0}[\omega^4,\omega^{-4}]$, and each $F_i^\circ$ denotes a polynomial in the expressions $M_{T^\circ}(\sum_j\varepsilon_{kj}\mathbf{e}_j)$ with coefficients in $\mathbb{Z}_{\geq0}[\omega^4,\omega^{-4}]$. The $\mathbf{g}_i$ and $\mathbf{g}_i^\circ$ are integral vectors. We have 
\begin{align*}
\Lambda_T\big(\sum_i\varepsilon_{ki}\mathbf{e}_i,\mathbf{e}_j\big) = \sum_i\varepsilon_{ki}\lambda_{ij} = \sum_i b_{ik}\lambda_{ij} = 4\delta_{kj}
\end{align*}
so that 
\[
M_T\big(\sum_i\varepsilon_{ki}\mathbf{e}_i\big)M_T(\mathbf{e}_j) = \omega^{-8\delta_{kj}} M_T(\mathbf{e}_j)M_T\big(\sum_i\varepsilon_{ki}\mathbf{e}_i\big).
\]
Similarly, we have 
\[
M_{T^\circ}\big(\sum_i\varepsilon_{ki}\mathbf{e}_i\big)M_{T^\circ}(\mathbf{e}_j) = \omega^{8\delta_{kj}} M_{T^\circ}(\mathbf{e}_j)M_{T^\circ}\big(\sum_i\varepsilon_{ki}\mathbf{e}_i\big).
\]
It follows that we can rearrange the factors in the last expression for $\mathbb{I}_{\mathcal{D}}^q(l)$ to get 
\begin{align*}
\mathbb{I}_{\mathcal{D}}^q(l) &= \omega^{-N_l+\alpha+\alpha^\circ} P\cdot\prod_{i=1}^m M_T(\mathbf{g}_i)^{-w_i}\otimes Q\cdot\prod_{i=1}^m M_{T^\circ}(\mathbf{g}_i^\circ)^{w_i^\circ} \\
&= \omega^{-N_l+\alpha+\alpha^\circ}(P\otimes Q)\cdot\big(\prod_{i=1}^m M_T(\mathbf{g}_i)^{-w_i}\otimes \prod_{i=1}^m M_{T^\circ}(\mathbf{g}_i^\circ)^{w_i^\circ}\big)
\end{align*}
where $P$ is a rational function in the expressions $M_T(\sum_i\varepsilon_{kj}\mathbf{e}_j)$ with coefficients in $\mathbb{Z}_{\geq0}[\omega^4,\omega^{-4}]$ and $Q$ is a polynomial in the expressions $M_{T^\circ}(\sum_i\varepsilon_{kj}\mathbf{e}_j)$ with coefficients in $\mathbb{Z}_{\geq0}[\omega^4,\omega^{-4}]$. We claim that the factor $P\otimes Q$ is a Laurent polynomial in the $B_j$ whose coefficients are rational expressions in the~$X_j$ with coefficients in~$\mathbb{Z}_{\geq0}[q,q^{-1}]$. Indeed, $P\otimes 1$ is a rational function in the $X_j$, while $1\otimes Q$ is a polynomial in the expressions 
\begin{align*}
1\otimes M_{T^\circ}\big(\sum_i\varepsilon_{ki}\mathbf{e}_i\big) &= M_T\big(\sum_i\varepsilon_{ki}\mathbf{e}_i-\sum_i\varepsilon_{ki}\mathbf{e}_i\big) \otimes M_{T^\circ}\big(\sum_i\varepsilon_{ki}\mathbf{e}_i\big) \\
&= M_T\big(\sum_i\varepsilon_{ki}\mathbf{e}_i\big)M_T\big(-\sum_i\varepsilon_{ki}\mathbf{e}_i\big) \otimes M_{T^\circ}\big(\sum_i\varepsilon_{ki}\mathbf{e}_i\big) \\
&= X_k\mathbb{B}_k.
\end{align*}
Hence the product 
\[
P\otimes Q=(P\otimes1)\cdot(1\otimes Q)
\]
has the desired form. By Lemma~6.2 in~\cite{Tran}, we have $\Lambda_{T_\mathcal{C}}(\mathbf{e}_i,\mathbf{e}_j)=\Lambda_T(\mathbf{g}_i,\mathbf{g}_j)$, and so we can write 
\[
\mathbb{I}_{\mathcal{D}}^q(l) = \omega^{-N_l}(P\otimes Q)\cdot\left(M_T(-\mathbf{g}_{\mathcal{C}})\otimes M_{T^\circ}(\mathbf{g}_{\mathcal{C}^\circ})\right).
\]
We have 
\[
M_T(-\mathbf{g}_{\mathcal{C}^\circ})\otimes M_{T^\circ}(\mathbf{g}_{\mathcal{C}^\circ})=\prod_{i=1}^mB_i^{g_i}
\]
for some integral exponents $g_i$, and so 
\begin{align*}
\mathbb{I}_{\mathcal{D}}^q(l) &= \omega^{-N_l}(P\otimes Q)\cdot\left(M_T(-\mathbf{g}_{\mathcal{C}}) M_T(\mathbf{g}_{\mathcal{C}^\circ})\otimes1\right)\cdot\prod_{i=1}^mB_i^{g_i} \\
&=(P\otimes Q)\cdot\left(M_T(-\mathbf{g}_{\mathcal{C}} + \mathbf{g}_{\mathcal{C}^\circ})\otimes1\right)\cdot\prod_{i=1}^mB_i^{g_i}.
\end{align*}
It follows from Lemma~\ref{lem:gsum} that the factor $M_T(-\mathbf{g}_{\mathcal{C}} + \mathbf{g}_{\mathcal{C}^\circ})\otimes1$ is a monomial in the $X_j$ with coefficients in $\mathbb{Z}_{\geq0}[\omega^4,\omega^{-4}]$. This completes the proof.
\end{proof}

Thus we see that $\mathbb{I}_{\mathcal{D}}^q(l)$ is an element of the algebra $\mathcal{G}$ introduced following Definition~\ref{def:tensorgenerators}. By the properties established in Propositions~\ref{prop:commutation}, \ref{prop:Btrans}, and~\ref{prop:Xtrans}, we can regard its image in the quotient $\mathcal{F}_{\mathcal{D}}$ as an element of the algebra $\mathcal{D}_\Sigma^q$. Thus we have constructed a canonical map $\mathbb{I}_{\mathcal{D}}^q:\mathcal{D}_\Sigma(\mathbb{Z}^t)\rightarrow\mathcal{D}_\Sigma^q$ as desired.

We should emphasize here that a function of the form $\mathbb{I}_{\mathcal{D}}^q(l)$ for $q=1$ need not be regular on all of~$\mathcal{D}_\Sigma$. Indeed, after choosing an ideal triangulation~$T$ of~$\Sigma$, we can define $l$ to consist of a single internal edge~$j$ of~$T$ on~$\Sigma$, together with the image of this edge on~$\Sigma^\circ$. In this case, we have $\mathbb{I}^1(l)=B_j$, and we have seen in Section~\ref{sec:TheClassicalLimit} that the expression $\mu_k^*(B_k)$ need not be a Laurent polynomial.

\subsection{Properties}

We will now examine the properties of our construction. To formulate our results precisely, we first note that there is a natural inclusion $\varphi:\mathcal{A}_\Sigma(\mathbb{Z}^t)\hookrightarrow\mathcal{D}_\Sigma(\mathbb{Z}^t)$. 

\begin{definition}
Let $l$ be a point of $\mathcal{A}_\Sigma(\mathbb{Z}^t)$ represented by a collection of curves on~$\Sigma$. Draw those curves having positive weight on the disk~$\Sigma^\circ$. The point $l$ determines a collection of curves of $\Sigma$ having negative weight. Take absolute values to get a collection of positive weights for these curves on~$\Sigma$. These data determine the point $\varphi(l)\in\mathcal{D}_\Sigma(\mathbb{Z}^t)$.
\end{definition}

The diagrams drawn in Section~\ref{sec:Laminations} show an example of a point $l\in\mathcal{A}_\Sigma(\mathbb{Z}^t)$ and the corresponding point $\varphi(l)\in\mathcal{D}_\Sigma(\mathbb{Z}^t)$. We also have a natural map $\pi^q:\mathcal{X}_\Sigma^q\rightarrow\mathcal{D}_\Sigma^q$.

\begin{definition}
The map $\pi^q$ is an algebra homomorphism defined on generators by $\pi^q(X_i)=X_i\mathbb{B}_i$.
\end{definition}

\begin{theorem}
The map $\mathbb{I}_{\mathcal{D}}^q:\mathcal{D}_\Sigma(\mathbb{Z}^t)\rightarrow\mathcal{D}_\Sigma^q$ satisfies the following properties:
\begin{enumerate}
\item Each $\mathbb{I}_{\mathcal{D}}^q(l)$ is a rational function in the variables $B_i$ and~$X_i$ with coefficients in $\mathbb{Z}_{\geq0}[q,q^{-1}]$.

\item The expression $\mathbb{I}_{\mathcal{D}}^q(l)$ agrees with the rational function $\mathbb{I}_{\mathcal{D}}(l)$ when $q=1$.

\item The following diagram commutes:
\[
\xymatrix{ 
\mathcal{A}_\Sigma(\mathbb{Z}^t) \ar[r]^-{\mathbb{I}_{\mathcal{A}}^q} \ar[d]_{\varphi} & \mathcal{X}_\Sigma^q \ar[d]^{\pi^q} \\
\mathcal{D}_\Sigma(\mathbb{Z}^t) \ar_-{\mathbb{I}_{\mathcal{D}}^q}[r] & \mathcal{D}_\Sigma^q.
}
\]
\end{enumerate}
\end{theorem}

\begin{proof}
1. This was proved in Theorem~\ref{thm:Dcanonicalrational}.

2. This follows immediately from the definition of $\mathbb{I}_{\mathcal{D}}$ given in~\cite{Dlam}.

3. Let $l$ be a point of $\mathcal{A}_\Sigma(\mathbb{Z}^t)$, represented by a collection of curves on~$\Sigma$. There exists an ideal triangulation $T_l$ of $\Sigma$ such that each of these curves coincides with an edge of~$T_l$. Let 
\[
\mathbf{w}=(w_1,\dots,w_m)
\]
be the integral vector whose $i$th component $w_i$ is the weight of the curve corresponding to the edge $i$ of $T_l$. By Theorem~\ref{thm:Xcanonicalregular}, we know that $1\otimes M_{T_l^\circ}(\mathbf{w})$ is a Laurent polynomial in the expressions $1\otimes M_{T_l^\circ}(\sum_i\varepsilon_{ki}\mathbf{e}_i)=X_k\mathbb{B}_k$. Indeed, it is the image of $\mathbb{I}_{\mathcal{A}}^q(l)$ under the map $\pi^q$. Let us write $\mathbf{w}_+=(w_1^+,\dots,w_m^+)$ for the vector whose $i$th component equals $w_i$ if $w_i\geq0$ and zero otherwise. Let us write $\mathbf{w}_-=(w_1^-,\dots,w_m^-)$ for the vector whose $i$th component equals~$w_i$ if $w_i\leq0$ and zero otherwise. Then in the algebra $\mathcal{F}_{\mathcal{D}}$ defined above, we have 
\begin{align*}
1\otimes M_{T_l^\circ}(\mathbf{w}) &= 1\otimes \omega^{\Lambda_{T_l^\circ}(\mathbf{w}_-,\mathbf{w}_+)}M_{T_l^\circ}(\mathbf{w}_-)M_{T_l^\circ}(\mathbf{w}_+) \\
&= 1\otimes\omega^{\Lambda_{T_l^\circ}(\mathbf{w}_-,\mathbf{w}_+)}\omega^{\sum_{i<j}\Lambda_{T_l^\circ}(\mathbf{e}_i,\mathbf{e}_j)w_i^-w_j^-}\prod_{i=1}^mM_{T_l^\circ}(\mathbf{e}_i)^{w_i^-}\cdot M_{T_l^\circ}(\mathbf{w}_+) \\
&= \omega^{\Lambda_{T_l^\circ}(\mathbf{w}_-,\mathbf{w}_+)}\omega^{-\sum_{i<j}\Lambda_{T_l}(\mathbf{e}_i,\mathbf{e}_j)w_i^-w_j^-}\prod_{i=1}^mM_{T_l}(\mathbf{e}_{m-i+1})^{w_{m-i+1}^-}\otimes M_{T_l^\circ}(\mathbf{w}_+) \\
&= \omega^{\Lambda_{T_l^\circ}(\mathbf{w}_-,\mathbf{w}_+)}M_{T_l}(-\mathbf{w}_-)^{-1}\otimes M_{T_l^\circ}(\mathbf{w}_+).
\end{align*}
In the third step of this calculation, we have factored out $\prod_{i=1}^m(M_{T_l}(-\mathbf{e}_i)\otimes M_{T_l^\circ}(\mathbf{e}_i))^{w_i^-}$, which equals~1 in $\mathcal{F}_{\mathcal{D}}$. It is easy to see that the expression in the last line of this calculation is $(\mathbb{I}_{\mathcal{D}}^q\circ\varphi)(l)$. Thus we have 
\[
(\mathbb{I}_{\mathcal{D}}^q\circ\varphi)(l)=(\pi^q\circ\mathbb{I}_{\mathcal{A}}^q)(l)
\]
as desired.
\end{proof}

\appendix
\section{Derivation of mutation formulas}
\label{app:DerivationOfMutationFormulas}

In this appendix, we calculate the action of the map $\mu_k^q$ of Definition~\ref{def:doublemutation} on the generators~$B_i$ and~$X_i$.

\begin{lemma}
\label{lem:commuteseries}
Let $\varphi(x)$ be any formal power series in $x$. Then 
\[
\varphi(X_k)B_i = B_i\varphi(q^{2\delta_{ik}}X_k), \quad
\varphi(\widehat{X}_k)B_i = B_i\varphi(q^{2\delta_{ik}}\widehat{X}_k),
\]
and
\[
\varphi(X_k)X_i = X_i\varphi(q^{2\varepsilon_{ki}}X_k).
\]
\end{lemma}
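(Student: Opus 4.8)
The plan is to reduce everything to the case where $\varphi(x)=x^n$ is a single monomial. Both sides of each identity are $\mathbb{Z}[q,q^{-1}]$-linear in the coefficients of $\varphi$, and the substitution $x\mapsto q^{2c}x$ acts monomial-by-monomial, so an identity of the form $X_k^nB_i=q^{2cn}B_iX_k^n$ valid for all $n\ge 0$ (and its analogues) assembles into the stated power-series identity. Thus it suffices to establish, for each generator $G\in\{B_i,X_i\}$ and each $Z\in\{X_k,\widehat{X}_k\}$ appearing in the statement, a relation $ZG=q^{2c}GZ$ with the appropriate constant $c$, and then to promote it to $Z^nG=q^{2cn}GZ^n$ by an immediate induction on $n$: one has $Z^nG=Z^{n-1}(ZG)=q^{2c}Z^{n-1}GZ=q^{2c}q^{2c(n-1)}GZ^{n-1}Z=q^{2cn}GZ^n$, the base case $n=0$ being trivial.

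First I would treat the third identity. Here $Z=X_k$, $G=X_i$, and the defining relation of the quantum torus algebra reads $X_kX_i=q^{2\varepsilon_{ki}}X_iX_k$, so $c=\varepsilon_{ki}$; the induction above gives $X_k^nX_i=q^{2n\varepsilon_{ki}}X_iX_k^n$, i.e.\ $\varphi(X_k)X_i=X_i\varphi(q^{2\varepsilon_{ki}}X_k)$. Next I would treat the two identities with $G=B_i$. For $Z=X_k$ the relation $X_kB_i=q^{2\delta_{ik}}B_iX_k$ is one of the defining relations of $\mathcal{D}_{\mathbf{i}}^q$, giving $c=\delta_{ik}$ and hence $\varphi(X_k)B_i=B_i\varphi(q^{2\delta_{ik}}X_k)$. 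For $Z=\widehat{X}_k$ the key point is that $\widehat{X}_k=X_k\prod_jB_j^{\varepsilon_{kj}}$ satisfies the \emph{same} commutation relation with $B_i$: since the $B_j$ commute among themselves, $\widehat{X}_kB_i=X_k\big(\prod_jB_j^{\varepsilon_{kj}}\big)B_i=X_kB_i\prod_jB_j^{\varepsilon_{kj}}=q^{2\delta_{ik}}B_iX_k\prod_jB_j^{\varepsilon_{kj}}=q^{2\delta_{ik}}B_i\widehat{X}_k$. The induction then yields $\widehat{X}_k^nB_i=q^{2n\delta_{ik}}B_i\widehat{X}_k^n$ and so $\varphi(\widehat{X}_k)B_i=B_i\varphi(q^{2\delta_{ik}}\widehat{X}_k)$.

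I do not expect any serious obstacle here: the entire argument is a formal manipulation inside the quantum torus algebra. The only two points requiring a moment of care are (i) the passage from monomials to arbitrary formal power series, which is harmless because each identity is homogeneous in the degree in $Z$, so it may be checked one monomial at a time; and (ii) the verification that $\widehat{X}_k$ inherits the commutation relation of $X_k$ with each $B_i$, which follows immediately from the commutativity of the $B_j$.
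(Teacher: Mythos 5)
Your proof is correct and follows essentially the same route as the paper's: expand $\varphi$ into monomials and commute term by term using the defining relations $X_kX_i=q^{2\varepsilon_{ki}}X_iX_k$ and $X_kB_i=q^{2\delta_{ik}}B_iX_k$. The one small addition is that you actually verify $\widehat{X}_kB_i=q^{2\delta_{ik}}B_i\widehat{X}_k$ from the commutativity of the $B_j$, a fact the paper simply cites without proof.
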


\begin{proof}
Write $\varphi(x)=a_0+a_1x+a_2x^2+\dots$. Then the relation $X_kX_i=q^{2\varepsilon_{ki}}X_iX_k$ implies 
\begin{align*}
\varphi(X_k)X_i &= a_0X_i+a_1X_kX_i+a_2X_k^2X_i+\dots \\
&= X_ia_0+X_ia_1q^{2\varepsilon_{ki}}X_k+X_ia_2q^{4\varepsilon_{ki}}X_k^2+\dots \\
&= X_i\left(a_0+a_1(q^{2\varepsilon_{ki}}X_k)+a_2(q^{2\varepsilon_{ki}}X_k)^2+\dots\right) \\
&= X_i\varphi(q^{2\varepsilon_{ki}}X_k).
\end{align*}
This proves the third relation. The first two relations are proved similarly. For these one uses the facts $X_kB_i=q^{2\delta_{ik}}B_iX_k$ and $\widehat{X}_kB_i=q^{2\delta_{ik}}B_i\widehat{X}_k$.
\end{proof}

\begin{proposition}
The map $\mu_k^\sharp$ is given on generators by the formulas 
\[
\mu_k^\sharp(B_i)=
\begin{cases}
B_k(1+qX_k)(1+q\widehat{X}_k)^{-1} & \mbox{if } i=k \\
B_i & \mbox{if } i\neq k
\end{cases}
\]
and 
\[
\mu_k^\sharp(X_i)=
\begin{cases}
X_i(1+qX_k)(1+q^3X_k)\dots(1+q^{2|\varepsilon_{ik}|-1}X_k) & \mbox{if } \varepsilon_{ik}\leq0 \\
X_i{\left((1+q^{-1}X_k)(1+q^{-3}X_k)\dots(1+q^{1-2|\varepsilon_{ik}|}X_k)\right)}^{-1} & \mbox{if } \varepsilon_{ik}\geq0.
\end{cases}
\]
\end{proposition}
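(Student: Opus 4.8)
The plan is to compute the conjugation action $\mu_k^\sharp = \Ad_{\Psi^q(X_k)/\Psi^q(\widehat{X}_k)}$ on each generator directly, using the functional equation of the quantum dilogarithm together with the commutation lemma (Lemma~\ref{lem:commuteseries}). First I would record the key identity satisfied by $\Psi^q$, namely $\Psi^q(q^2 x) = (1+qx)\Psi^q(x)$, which follows by shifting the index in the infinite product. More generally, iterating gives $\Psi^q(q^{2m}x) = \Psi^q(x)\prod_{p=1}^{m}(1+q^{2p-1}x)$ for $m \geq 0$, and the inverse relation for $m < 0$.

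Next I would treat the action on $B_i$. Since $B_i$ commutes with both $X_j$ and $\widehat{X}_j$ for $i \neq k$ — here one must check that $\widehat{X}_k = X_k\prod_j B_j^{(e_k,e_j)}$ commutes with $B_i$, which is immediate because the $B_j$ commute among themselves and $X_k B_i = q^{2\delta_{ik}}B_i X_k$ — it follows that $\Psi^q(X_k)$ and $\Psi^q(\widehat{X}_k)$ commute with $B_i$ when $i \neq k$, so $\mu_k^\sharp(B_i) = B_i$. For $i = k$: by Lemma~\ref{lem:commuteseries} we have $\Psi^q(X_k)B_k = B_k\,\Psi^q(q^2 X_k) = B_k(1+qX_k)\Psi^q(X_k)$ and similarly $\Psi^q(\widehat{X}_k)B_k = B_k(1+q\widehat{X}_k)\Psi^q(\widehat{X}_k)$. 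Conjugating, $\mu_k^\sharp(B_k) = \Psi^q(X_k)\Psi^q(\widehat{X}_k)^{-1} B_k \Psi^q(\widehat{X}_k)\Psi^q(X_k)^{-1}$; pushing $B_k$ to the left through both factors and cancelling the surviving copies of $\Psi^q(X_k)$ and $\Psi^q(\widehat{X}_k)$ yields $B_k(1+qX_k)(1+q\widehat{X}_k)^{-1}$, where I must also use that $X_k$ and $\widehat{X}_k$ commute so the scalar factors may be separated and reordered freely.

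For the action on $X_i$: since $X_i$ commutes with $\widehat{X}_k$ (both lie in the commutative subalgebra generated by the $\widehat{X}$'s, or more simply $X_i\widehat{X}_k = \widehat{X}_k X_i$ follows from the defining relations), the factor $\Psi^q(\widehat{X}_k)$ drops out and $\mu_k^\sharp(X_i) = \Ad_{\Psi^q(X_k)}(X_i)$. Then by Lemma~\ref{lem:commuteseries}, $\Psi^q(X_k)X_i = X_i\,\Psi^q(q^{2\varepsilon_{ki}}X_k)$. When $\varepsilon_{ik} \leq 0$, i.e. $\varepsilon_{ki} \geq 0$, write $\varepsilon_{ki} = |\varepsilon_{ik}|$ and use $\Psi^q(q^{2|\varepsilon_{ik}|}X_k) = \Psi^q(X_k)\prod_{p=1}^{|\varepsilon_{ik}|}(1+q^{2p-1}X_k)$, so after conjugating $\mu_k^\sharp(X_i) = X_i\prod_{p=1}^{|\varepsilon_{ik}|}(1+q^{2p-1}X_k) = X_i(1+qX_k)(1+q^3X_k)\cdots(1+q^{2|\varepsilon_{ik}|-1}X_k)$. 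When $\varepsilon_{ik} \geq 0$, one has $\varepsilon_{ki} = -|\varepsilon_{ik}| < 0$ and the analogous identity gives the inverse product $\Psi^q(q^{-2|\varepsilon_{ik}|}X_k) = \Psi^q(X_k)\left(\prod_{p=0}^{|\varepsilon_{ik}|-1}(1+q^{-2p-1}X_k)\right)^{-1}$, producing the second case of the formula.

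The only genuinely delicate points are the bookkeeping of which variables commute — in particular that $\widehat{X}_k$ commutes with $B_i$ for $i\neq k$ and with all the $X_i$, and that $X_k$ commutes with $\widehat{X}_k$ (stated in the text) — and the correct index range in the iterated functional equation for $\Psi^q$, where an off-by-one in the exponents of $q$ would spoil the $*$-symmetry; everything else is a routine telescoping of the cancellations. I expect verifying the commutation relations involving $\widehat{X}_k$ to be the main thing to get right, since it is what makes the $\Psi^q(\widehat{X}_k)$ factor disappear cleanly in the $X_i$ case and contribute exactly one factor $(1+q\widehat{X}_k)^{-1}$ in the $B_k$ case.
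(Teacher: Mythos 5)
Your proposal is correct and follows essentially the same route as the paper's proof in Appendix~\ref{app:DerivationOfMutationFormulas}: push the generator through the dilogarithm factors via Lemma~\ref{lem:commuteseries}, then iterate the functional equation $\Psi^q(q^2x)=(1+qx)\Psi^q(x)$ (and its inverse), using the commutativity of $X_k$ with $\widehat{X}_k$ and of $X_i$ with $\widehat{X}_k$ to cancel the surviving dilogarithm factors. The commutation checks you flag as delicate are exactly the ones the paper relies on, and your verifications of them are correct.
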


\begin{proof}
By Lemma~\ref{lem:commuteseries}, we have 
\begin{align*}
\mu_k^\sharp(B_i) &= \Psi^q(X_k)\Psi^q(\widehat{X}_k)^{-1}B_i\Psi^q(\widehat{X}_k)\Psi^q(X_k)^{-1} \\
&= B_i\Psi^q(q^{2\delta_{ik}}X_k)\Psi^q(q^{2\delta_{ik}}\widehat{X}_k)^{-1}\Psi^q(\widehat{X}_k)\Psi^q(X_k)^{-1}. 
\end{align*}
If $i\neq k$, then this equals $B_i$ as desired. Suppose on the other hand that $i=k$. Using the identity $\Psi^q(q^2x)=(1+qx)\Psi^q(x)$ and commutativity of~$X_i$ and~$\widehat{X}_k$, we can rewrite this last expression as
\begin{align*}
\mu_k^\sharp(B_i) &= B_k\Psi^q(q^2X_k)\Psi^q(q^2\widehat{X}_k)^{-1}\Psi^q(\widehat{X}_k)\Psi^q(X_k)^{-1} \\
&= B_k(1+qX_k)\Psi^q(X_k) \left((1+q\widehat{X}_k)\Psi^q(\widehat{X}_k)\right)^{-1} \Psi^q(\widehat{X}_k)\Psi^q(X_k)^{-1} \\
&= B_k(1+qX_k)(1+q\widehat{X}_k)^{-1}.
\end{align*}
This completes the proof of the first formula.

To prove the second formula, observe that by Lemma~\ref{lem:commuteseries} and the commutativity of~$X_i$ and~$\widehat{X}_k$ we have 
\begin{align*}
\mu_k^\sharp(X_i) &= \Psi^q(X_k)\Psi^q(\widehat{X}_k)^{-1}X_i\Psi^q(\widehat X_k)\Psi^q(X_k)^{-1} \\
&= \Psi^q(X_k)X_i\Psi^q(X_k)^{-1} \\
&= X_i\Psi^q(q^{-2\varepsilon_{ik}}X_k)\Psi^q(X_k)^{-1}.
\end{align*}
If $\varepsilon_{ik}\leq0$, then the identity $\Psi^q(q^2x)=(1+qx)\Psi^q(x)$ implies 
\begin{align*}
\mu_k^\sharp(X_i) &= X_i\Psi^q(q^2q^{2|\varepsilon_{ik}|-2}X_k)\Psi^q(X_k)^{-1} \\
&= X_i(1+q^{2|\varepsilon_{ik}|-1}X_k)\Psi^q(q^{2|\varepsilon_{ik}|-2}X_k)\Psi^q(X_k)^{-1} \\
&= X_i(1+q^{2|\varepsilon_{ik}|-1}X_k)(1+q^{2|\varepsilon_{ik}|-3}X_k)\Psi^q(q^{2|\varepsilon_{ik}|-4}X_k)\Psi^q(X_k)^{-1} \\
&= \dots \\
&= X_i(1+q^{2|\varepsilon_{ik}|-1}X_k)\dots(1+q^3X_k)(1+qX_k)
\end{align*}
as desired. If $\varepsilon_{ik}\geq0$, there is a similar argument using the identity $\Psi^q(q^{-2}x)=(1+q^{-1}x)^{-1}\Psi^q(x)$.
\end{proof}

\begin{lemma}
\label{lem:dualmutation}
Let $(L,\{e_i\}_{i\in I},\{e_j\}_{j\in J},(\cdot,\cdot))$ be a seed. If we mutate this seed in the direction of a basis vector~$e_k$, then the basis $\{f_i\}$ for $L^\vee$ transforms to a new basis $\{f_i'\}$ given by 
\[
f_i'=
\begin{cases}
-f_i+\sum_j[-\varepsilon_{kj}]_+f_j & \mbox{if } i=k \\
f_i & \mbox{if } i\neq k.
\end{cases}
\]
\end{lemma}

\begin{proof}
The transformation $e_i\mapsto e_i'$ can be represented by an explicit matrix by Definition~\ref{def:altmutation}, and the transformation rule appearing in the lemma is represented by the transpose of this matrix.
\end{proof}

\begin{proposition}
The map $\mu_k'$ is given on generators by the formulas 
\[
\mu_k'(B_i')=
\begin{cases}
\mathbb{B}_k^-/B_k & \mbox{if } i=k \\
B_i & \mbox{if } i\neq k
\end{cases}
\]
and 
\[
\mu_k'(X_i')=
\begin{cases}
X_k^{-1} & \mbox{if } i=k \\
q^{-[\varepsilon_{ik}]_+\varepsilon_{ik}}X_iX_k^{[\varepsilon_{ik}]_+} & \mbox{if } i\neq k.
\end{cases}
\]
\end{proposition}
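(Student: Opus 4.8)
The plan is to unwind the definition of $\mu_k'$. By Definition~\ref{def:altmutation} mutation leaves the lattice $\Lambda$ and its form $(\cdot,\cdot)$ unchanged, only the distinguished basis is altered; hence $\Lambda_{\mathcal D,\mathbf i'}=\Lambda_{\mathcal D,\mathbf i}$ with $(\cdot,\cdot)_{\mathcal D}'=(\cdot,\cdot)_{\mathcal D}$, and the ``natural lattice map'' appearing in Definition~\ref{def:doublemutation} is literally the identity. Consequently the induced algebra isomorphism satisfies $\mu_k'(Y_v)=Y_v$ for all $v\in\Lambda_{\mathcal D}$. Since by construction $X_i'=Y_{e_i'}$ and $B_i'=Y_{f_i'}$ as elements of $\mathcal D_{\mathbf i'}^q$, it remains only to expand $Y_{e_i'}$ and $Y_{f_i'}$ in the generators $X_i=Y_{e_i}$, $B_i=Y_{f_i}$ of $\mathcal D_{\mathbf i}^q$, using Definition~\ref{def:altmutation}, Lemma~\ref{lem:dualmutation}, and the defining relation $q^{-(v_1,v_2)}Y_{v_1}Y_{v_2}=Y_{v_1+v_2}$ of the quantum torus. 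Along the way I will use the elementary facts $Y_{nv}=Y_v^{\,n}$ and $Y_{-v}=Y_v^{-1}$, both immediate from $(v,v)=0$.

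First I would treat the $B$-generators, which is the easy half because the $f_i$ pairwise commute (as $\big((0,\varphi_1),(0,\varphi_2)\big)_{\mathcal D}=0$), so a product $\prod_j B_j^{a_j}$ carries no ordering ambiguity. For $i\neq k$, Lemma~\ref{lem:dualmutation} gives $f_i'=f_i$ and hence $\mu_k'(B_i')=B_i$. For $i=k$, the same lemma gives $f_k'=-f_k+\sum_j[-\varepsilon_{kj}]_+f_j$, so $\mu_k'(B_k')=Y_{f_k'}=B_k^{-1}\prod_j B_j^{[-\varepsilon_{kj}]_+}$; comparing $\prod_j B_j^{[-\varepsilon_{kj}]_+}$ with $\mathbb B_k^-=\prod_{j\,|\,(e_k,e_j)<0}B_j^{-(e_k,e_j)}$ (the only nonzero factors occur when $\varepsilon_{kj}<0$, where $-\varepsilon_{kj}=[-\varepsilon_{kj}]_+$) yields $\mu_k'(B_k')=\mathbb B_k^-/B_k$.

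Next I would do the $X$-generators via Definition~\ref{def:altmutation}. For $i=k$ we have $e_k'=-e_k$, so $\mu_k'(X_k')=Y_{-e_k}=X_k^{-1}$. For $i\neq k$ we have $e_i'=e_i+[\varepsilon_{ik}]_+e_k$; applying the quantum torus relation with $v_1=e_i$, $v_2=[\varepsilon_{ik}]_+e_k$, and using $(e_i,[\varepsilon_{ik}]_+e_k)=[\varepsilon_{ik}]_+\varepsilon_{ik}$ together with $Y_{[\varepsilon_{ik}]_+e_k}=X_k^{[\varepsilon_{ik}]_+}$, gives $\mu_k'(X_i')=Y_{e_i'}=q^{-[\varepsilon_{ik}]_+\varepsilon_{ik}}X_iX_k^{[\varepsilon_{ik}]_+}$, as claimed.

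There is no genuine obstacle here: the whole proof is bookkeeping of the $q$-powers generated by the Heisenberg-type relation of the quantum torus, and the only nontrivial power arises in the $X_i'$ formula from the single pairing $(e_i,e_k)=\varepsilon_{ik}$. The points that merit a careful word are the identification of the ``natural lattice map'' with the identity (so that $\mu_k'$ is genuinely given by $Y_v\mapsto Y_v$), and the translation between the truncation notation $[-\varepsilon_{kj}]_+$ of Lemma~\ref{lem:dualmutation} and the product defining $\mathbb B_k^-$.
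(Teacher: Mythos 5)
Your proof is correct and follows essentially the same route as the paper's: both apply Lemma~\ref{lem:dualmutation} together with the vanishing of $(\cdot,\cdot)_{\mathcal{D}}$ on the $f_i$'s for the $B$-generators, and the quantum torus relation with the single pairing $(e_i,e_k)_{\mathcal{D}}=\varepsilon_{ik}$ for the $X$-generators. Your explicit remarks on the lattice map being the identity and on matching $[-\varepsilon_{kj}]_+$ against the definition of $\mathbb{B}_k^-$ are details the paper leaves implicit, but the argument is the same.
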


\begin{proof}
Let $Y_v$ be the generator of $\mathcal{D}_{\mathbf{i}}$ associated to $v\in L_{\mathcal{D}}$ as in Definition~\ref{def:quantumtorus}. By Lemma~\ref{lem:dualmutation} and the fact that $(f_i,f_j)_{\mathcal{D}}=0$ for all $i$,~$j$, we have 
\begin{align*}
\mu_k'(B_k') &= Y_{-f_k+\sum_j[-\varepsilon_{kj}]_+f_j} \\
&= B_k^{-1}\mathbb{B}_k^{-}.
\end{align*}
Similarly we have 
\begin{align*}
\mu_k'(X_i') &= Y_{e_i+[\varepsilon_{ik}]_+e_k} \\
&= q^{-[\varepsilon_{ik}]_+(e_i,e_k)_{\mathcal{D}}}Y_{e_i}Y_{[\varepsilon_{ik}]_+e_k} \\
&= q^{-[\varepsilon_{ik}]_+\varepsilon_{ik}}X_iX_k^{[\varepsilon_{ik}]_+}
\end{align*}
for $i\neq k$ and $\mu_k'(X_i')=Y_{-e_k}=X_k^{-1}$ for $i=k$.
\end{proof}

\begin{theorem}
The map $\mu_k^q$ is given on generators by the formulas 
\[
\mu_k^q(B_i')=
\begin{cases}
(qX_k\mathbb{B}_k^++\mathbb{B}_k^-)B_k^{-1}(1+q^{-1}X_k)^{-1} & \mbox{if } i=k \\
B_i & \mbox{if } i\neq k
\end{cases}
\]
and
\[
\mu_k^q(X_i')=
\begin{cases}
X_i\prod_{p=0}^{|\varepsilon_{ik}|-1}(1+q^{2p+1}X_k) & \mbox{if } \varepsilon_{ik}\leq0 \mbox{ and } i\neq k \\
X_iX_k^{\varepsilon_{ik}}\prod_{p=0}^{\varepsilon_{ik}-1}(X_k+q^{2p+1})^{-1} & \mbox{if } \varepsilon_{ik}\geq0 \mbox{ and } i\neq k \\
X_k^{-1} & \mbox{if } i=k.
\end{cases}
\]
\end{theorem}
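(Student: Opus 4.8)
The plan is to obtain the formulas by direct composition, since by Definition~\ref{def:doublemutation} we have $\mu_k^q=\mu_k^\sharp\circ\mu_k'$, and the three preceding propositions already record $\mu_k^\sharp$ and $\mu_k'$ explicitly on the generators $B_i$ and $X_i$. Thus the whole proof reduces to substituting one family of formulas into the other and tidying up the resulting $q$-powers using the commutation relations $X_iX_j=q^{2\varepsilon_{ij}}X_jX_i$, $B_iB_j=B_jB_i$, and $X_iB_j=q^{2\delta_{ij}}B_jX_i$.

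First I would dispose of the easy cases. For $i\neq k$ both $\mu_k'$ and $\mu_k^\sharp$ fix $B_i$, so $\mu_k^q(B_i')=B_i$. Since $\Psi^q(X_k)$ and $\Psi^q(\widehat X_k)$ both commute with $X_k$, the map $\mu_k^\sharp$ fixes $X_k$, so $\mu_k^q(X_k')=\mu_k^\sharp(X_k^{-1})=X_k^{-1}$. For $X_i'$ with $i\neq k$ and $\varepsilon_{ik}\le 0$ one has $[\varepsilon_{ik}]_+=0$, hence $\mu_k'(X_i')=X_i$, and applying $\mu_k^\sharp$ directly produces the stated product $X_i\prod_{p=0}^{|\varepsilon_{ik}|-1}(1+q^{2p+1}X_k)$. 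For $X_i'$ with $i\neq k$ and $\varepsilon_{ik}\ge 0$ one has $\mu_k'(X_i')=q^{-\varepsilon_{ik}^{2}}X_iX_k^{\varepsilon_{ik}}$; applying $\mu_k^\sharp$, which sends $X_i$ to $X_i\bigl(\prod_{p=0}^{\varepsilon_{ik}-1}(1+q^{-(2p+1)}X_k)\bigr)^{-1}$ and fixes $X_k$, and using that $X_k$ commutes with each factor $1+q^{-(2p+1)}X_k$ so that $X_k^{\varepsilon_{ik}}$ may be moved to the left, it remains only to verify the scalar identity $q^{-\varepsilon_{ik}^{2}}\prod_{p=0}^{\varepsilon_{ik}-1}(1+q^{-(2p+1)}X_k)^{-1}=\prod_{p=0}^{\varepsilon_{ik}-1}(X_k+q^{2p+1})^{-1}$. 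This is immediate from $(X_k+q^{2p+1})^{-1}=q^{-(2p+1)}(1+q^{-(2p+1)}X_k)^{-1}$ together with $\sum_{p=0}^{\varepsilon_{ik}-1}(2p+1)=\varepsilon_{ik}^{2}$.

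The only case with any bookkeeping is $B_k'$. Here $\mu_k'(B_k')=B_k^{-1}\mathbb{B}_k^{-}$, and since $\mathbb{B}_k^{-}$ is a product of $B_i$ with $i\neq k$ it is fixed by $\mu_k^\sharp$, so $\mu_k^q(B_k')=\mu_k^\sharp(B_k)^{-1}\mathbb{B}_k^{-}=(1+q\widehat X_k)(1+qX_k)^{-1}B_k^{-1}\mathbb{B}_k^{-}$. Now $\mathbb{B}_k^{-}$ commutes with $B_k$, with $X_k$ (hence with $(1+qX_k)^{-1}$), and with $\widehat X_k=X_k\mathbb{B}_k^{+}(\mathbb{B}_k^{-})^{-1}$ (hence with $1+q\widehat X_k$), so I would move it to the far left and use these same commutations to get $\mathbb{B}_k^{-}(1+q\widehat X_k)=\mathbb{B}_k^{-}+qX_k\mathbb{B}_k^{+}$. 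Finally, from $X_kB_k=q^{2}B_kX_k$ one gets $B_kX_kB_k^{-1}=q^{-2}X_k$, hence $B_k(1+qX_k)B_k^{-1}=1+q^{-1}X_k$ and therefore $(1+qX_k)^{-1}B_k^{-1}=B_k^{-1}(1+q^{-1}X_k)^{-1}$. Putting the pieces together gives $\mu_k^q(B_k')=(qX_k\mathbb{B}_k^{+}+\mathbb{B}_k^{-})B_k^{-1}(1+q^{-1}X_k)^{-1}$, as claimed. I expect this last case --- keeping track of the $q$-twists when rewriting $\widehat X_k$ via $\mathbb{B}_k^{\pm}$ and when commuting $B_k$ past $X_k$ --- to be the only real obstacle; the rest is a routine substitution.
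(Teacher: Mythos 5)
Your proposal is correct and follows essentially the same route as the paper's Appendix~\ref{app:DerivationOfMutationFormulas}: compose the explicit formulas for $\mu_k'$ and $\mu_k^\sharp$ on generators, absorb the scalar $q^{-\varepsilon_{ik}^2}$ via $\sum_{p=0}^{\varepsilon_{ik}-1}(2p+1)=\varepsilon_{ik}^2$ in the $\varepsilon_{ik}\geq 0$ case, and in the $B_k'$ case rewrite $\mathbb{B}_k^-(1+q\widehat{X}_k)=\mathbb{B}_k^-+qX_k\mathbb{B}_k^+$ and commute $B_k^{-1}$ past $(1+qX_k)^{-1}$. All the $q$-bookkeeping checks out.
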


\begin{proof}
By our formulas for $\mu_k'$ and $\mu_k^\sharp$, we have 
\begin{align*}
\mu_k^q(B_k') &= \mu_k^\sharp(\mu_k'(B_k')) = \mu_k^\sharp(\mathbb{B}_k^-/B_k) \\
&= \mathbb{B}_k^-\left(B_k(1+qX_k)(1+q\widehat{X}_k)^{-1}\right)^{-1} \\
&= \mathbb{B}_k^-(1+q\widehat{X}_k)(1+qX_k)^{-1}B_k^{-1}.
\end{align*}
By the definitions of $\mathbb{B}_k^-$ and $\widehat{X}_k$, this equals
\begin{align*}
\mu_k^q(B_k') &= \prod_{i|\varepsilon_{ki}<0}B_i^{-\varepsilon_{ki}}\big(1+qX_k\prod_iB_i^{\varepsilon_{ki}}\big)(1+qX_k)^{-1}B_k^{-1} \\
&= \big(\prod_{i|\varepsilon_{ki}<0}B_i^{-\varepsilon_{ki}}+qX_k\prod_{i|\varepsilon_{ki}>0}B_i^{\varepsilon_{ki}}\big)(1+qX_k)^{-1}B_k^{-1} \\
&=(qX_k\mathbb{B}_k^++\mathbb{B}_k^-)(1+qX_k)^{-1}B_k^{-1} \\
&=(qX_k\mathbb{B}_k^++\mathbb{B}_k^-)B_k^{-1}(1+q^{-1}X_k)^{-1}.
\end{align*}
From this calculation, we easily obtain the formula describing the action of the map $\mu_k^q$ on the generators $B_i'$.

On the other hand, if $\varepsilon_{ik}\leq0$ and $i\neq k$, then 
\begin{align*}
\mu_k^q(X_i') &= \mu_k^\sharp(\mu_k'(X_i')) = \mu_k^\sharp(X_i) \\
&= X_i(1+qX_k)(1+q^3X_k)\dots(1+q^{2|\varepsilon_{ik}|-1}X_k) \\
&= X_i\prod_{p=0}^{|\varepsilon_{ik}|-1}(1+q^{2p+1}X_k).
\end{align*}
If $\varepsilon_{ik}\geq0$ and $i\neq k$, then 
\begin{align*}
\mu_k^q(X_i') &= \mu_k^\sharp(\mu_k'(X_i')) = \mu_k^\sharp(q^{-\varepsilon_{ik}^2}X_iX_k^{\varepsilon_{ik}}) \\
&= q^{-\varepsilon_{ik}^2}X_i{\left((1+q^{-1}X_k)(1+q^{-3}X_k)\dots(1+q^{1-2|\varepsilon_{ik}|}X_k)\right)}^{-1}X_k^{\varepsilon_{ik}} \\
&= q^{-\varepsilon_{ik}^2}X_i{\left(q^{-1}(X_k+q)q^{-3}(X_k+q^3)\dots q^{1-2|\varepsilon_{ik}|}(X_k+q^{2|\varepsilon_{ik}|-1})\right)}^{-1}X_k^{\varepsilon_{ik}}.
\end{align*}
Applying the identity $1+3+5+\dots+(2N-1)=N^2$, this becomes 
\begin{align*}
\mu_k^q(X_i') &= X_i{\left((X_k+q)(X_k+q^3)\dots (X_k+q^{2|\varepsilon_{ik}|-1})\right)}^{-1}X_k^{\varepsilon_{ik}} \\
&= X_iX_k^{\varepsilon_{ik}} (X_k+q)^{-1}(X_k+q^3)^{-1}\dots (X_k+q^{2|\varepsilon_{ik}|-1})^{-1} \\
&= X_iX_k^{\varepsilon_{ik}}\prod_{p=0}^{\varepsilon_{ik}-1}(X_k+q^{2p+1})^{-1}.
\end{align*}
Finally, if $i=k$, then we have 
\[
\mu_k^q(X_i') = \mu_k^\sharp(\mu_k'(X_i')) = \mu_k^\sharp(X_k^{-1}) = X_k^{-1}.
\]
This proves the second formula.
\end{proof}

\section*{Acknowledgments}
\addcontentsline{toc}{section}{Acknowledgements}

I thank Ben~Davison, Alexander~Goncharov, Hyun~Kyu~Kim, Greg~Muller, and Linhui~Shen for helpful discussions.

\end{document}